\documentclass{amsart}
\usepackage{amsmath}

\usepackage{amssymb}
\usepackage{amsthm}
\usepackage{amscd,amsbsy}
\usepackage{xcolor}
\usepackage[T1]{fontenc}

\newtheorem{theorem}{Theorem}[section]
\newtheorem{lemma}[theorem]{Lemma}
\newtheorem{proposition}[theorem]{Proposition}
\newtheorem{corollary}[theorem]{Corollary}
\newtheorem{claim}[theorem]{Claim}

\theoremstyle{definition}
\newtheorem{definition}[theorem]{Definition}
\newtheorem{definitions}[theorem]{Definitions}

\newtheorem{definitions and remarks}[theorem]{Definitions and Remarks}

\theoremstyle{remark}
\newtheorem{notation}[theorem]{Notation}
\newtheorem{remark}[theorem]{Remark}
\newtheorem{remarks}[theorem]{Remarks}

\numberwithin{equation}{section}


\newcommand{\order}{\mathrm{order}\,}

\newcommand{\al}{{\alpha}}
\newcommand{\be}{{\beta}}
\newcommand{\de}{{\delta}}
\newcommand{\ep}{{\epsilon}}
\newcommand{\D}{{\Delta}}
\newcommand{\ga}{{\gamma}}
\newcommand{\Ga}{{\Gamma}}

\newcommand{\la}{{\lambda}}

\newcommand{\Om}{{\Omega}}
\newcommand{\p}{{\partial}}
\newcommand{\s}{{\sigma}}

\newcommand{\vp}{{\varphi}}

\newcommand{\IN}{{\mathbb N}}

\newcommand{\IR}{{\mathbb R}}
\newcommand{\IC}{{\mathbb C}}

\newcommand{\cC}{{\mathcal C}}

\newcommand{\cF}{{\mathcal F}}

\newcommand{\cI}{{\mathcal I}}
\newcommand{\cJ}{{\mathcal J}}

\newcommand{\cO}{{\mathcal O}}
\newcommand{\cQ}{{\mathcal Q}}

\newcommand{\ta}{{\tilde a}}
\newcommand{\tb}{{\tilde b}}
\newcommand{\tc}{{\tilde c}}

\newcommand{\tB}{{\widetilde B}}

\newcommand{\tG}{{\widetilde G}}

\newcommand{\tbe}{{\tilde \be}}
\newcommand{\txi}{{\tilde \xi}}
\newcommand{\teta}{{\tilde \eta}}

\newcommand{\hb}{{\hat b}}
\newcommand{\hc}{{\hat c}}
\newcommand{\hf}{{\hat f}}
\newcommand{\hg}{{\hat g}}
\newcommand{\hh}{{\hat h}}
\newcommand{\hal}{{\hat \al}}

\newcommand{\hga}{{\hat \ga}}
\newcommand{\hs}{{\hat \s}}
\newcommand{\hvp}{{\hat \vp}}

\newcommand{\wK}{{\widehat K}}

\newcommand{\llb}{{[\![}}
\newcommand{\rrb}{{]\!]}}

\newcommand{\RN}[1]{%
  \textup{\uppercase\expandafter{\romannumeral#1}}%
}

\begin{document}
\title[Solutions of quasianalytic equations]
{Solutions of quasianalytic equations}

\author[A.~Belotto]{Andr\'e Belotto da Silva}
\author[I.~Biborski]{Iwo Biborski}
\author[E.~Bierstone]{Edward Bierstone}
\address{University of Toronto, Department of Mathematics, 40 St. George Street,
Toronto, ON, Canada M5S 2E4}
\curraddr[A.~Belotto]{Universit\'e Paul Sabatier, Institut de Math\'ematiques de Toulouse,
118 route de Narbonne, F-31062 Toulouse Cedex 9, France}
\email[A.~Belotto]{andre.belotto\_da\_silva@math.univ-toulouse.fr}
\email[I.~Biborski]{iwobiborski@gmail.com}
\email[E.~Bierstone]{bierston@math.toronto.edu}
\thanks{Research supported in part by NSERC grant OGP0009070}

\subjclass[2010]{Primary 03C64, 26E10, 32S45; Secondary 30D60, 32B20}

\keywords{quasianalytic, Denjoy-Carleman class, blowing up, power substitution, resolution of singularities, 
analytic continuation, Weierstrass preparation}

\begin{abstract}
The article develops techniques for solving equations $G(x,y)=0$, where $G(x,y)=G(x_1,\ldots,x_n,y)$ is
a function in a given quasianalytic class (for example, a quasianalytic Denjoy-Carleman class, or the class
of $\cC^\infty$ functions definable in a polynomially-bounded $o$-minimal structure). We show that, if $G(x,y)=0$ 
has a formal power series solution $y=H(x)$ at some point $a$, then $H$ is the Taylor expansion at $a$ of a
quasianalytic solution $y=h(x)$, where $h(x)$ is allowed to have a certain controlled loss of regularity,
depending on $G$. Several important questions on quasianalytic functions, concerning division,
factorization, Weierstrass preparation, etc., fall into the framework of this problem (or are closely related),
and are also discussed.
\end{abstract}

\date{\today}
\maketitle
\setcounter{tocdepth}{1}
\tableofcontents

\section{Introduction}\label{sec:intro}
This article develops techniques for solving equations $G(x,y)=0$, where $G(x,y)=G(x_1,\ldots,x_n,y)$ is
a function in a given quasianalytic class (see Section 2). Assuming that $G(x,y)=0$ has a formal power
series solution $y=H(x)$ at some point $a$, we ask whether $H$ is the Taylor expansion at $a$ of a
quasianalytic solution $y=h(x)$, where $h(x)$ is allowed to have a certain controlled loss of regularity,
depending on $G$. Several important problems on quasianalytic functions, concerning division,
factorization, Weierstrass preparation, etc., fall into the framework of this question, or are closely related,
and they are also discussed in the paper.

There are two general categories of quasianalytic classes $\cQ$ that are studied in the recent literature:

\smallskip\noindent
(1)\, Quasianalytic Denjoy-Carleman classes $\cQ=\cQ_M$, going back to E. Borel \cite{Borel} and characterized
(following questions of Hadamard in studies of linear partial differential equations \cite{Had}) by the Denjoy-Carleman
theorem \cite{Den}, \cite{Carl}. These are classes of $\cC^\infty$ functions whose partial derivatives have bounds
on compact sets determined by a logarithmically convex sequence of 
positive real numbers $M = (M_j)_{j\in\IN}$; several
classical properities of $M$ guarantee that the functions of class $\cQ_M$ on an open subset $U$ of $\IR^n$ form
a ring $\cQ_M(U)$ that is closed under differentiation and (by the Denjoy-Carleman theorem)
\emph{quasianalytic} (i.e., the Taylor series homomorphism
at a point of $U$ is injective, if $U$ is connected). See \S\ref{subsec:DC}.

\smallskip\noindent
(2)\, Classes of $\cC^\infty$ functions that are definable in a given polynomially-bounded $o$-minimal
structure. Such structures arise in model theory, and define quasianalytic classes $\cQ$ according to a
result of C. Miller (see \cite{Mil}, \cite{RSW} and Remark \ref{rem:axioms}(3)).

\smallskip
Loss of regularity will be expressed as follows. If the equation $G(x,y)=0$ is of class $\cQ$, then a
solution $y=h(x)$ will be allowed to belong to a (perhaps larger) quasianalytic class $\cQ'$. For classes $\cQ$ as
in (2) above, we will always have $\cQ' = \cQ$. On the other hand, suppose that
$\cQ$ is a quasianalytic Denjoy-Carleman
class $\cQ_M$. Then we will find a positive integer $p$ depending on $G$, such that $h$ is of class $\cQ'$,
where $\cQ_M \subseteqq \cQ' \subseteqq \cQ_{M^{(p)}}$ and $M^{(p)}$ denotes the sequence $M^{(p)}_j := M_{pj}$.
More precisely, we can take $\cQ' = \cQ_{M^{(p)}} \bigcap \cC^{\infty}_M$ where $\cC^{\infty}_M(U)$ denotes
the subring of $\cC^\infty(U)$ of functions $f$ such that, for every relatively compact definable open $V\subset U$, $f|_V$ is
definable in the (polynomially bounded) $o$-minimal structure $\IR_{\cQ_M}$ generated by $\cQ_M$ (see \cite{RSW}
and \S\ref{subsec:shift}).

In the theorems following (proved in Sections  \ref{sec:main} and \ref{sec:poly}, respectively) and in
all results involving loss of regularity in Sections \ref{sec:contin}--\ref{sec:Weier}, $\cQ$ can be understood to mean a
quasianalytic class in one of the two general categories above, and then $\cQ'$ will mean either $\cQ$,
in the definable case (2), or $\cQ' \subseteqq \cQ_{M^{(p)}}$, as above, in the case that $\cQ=\cQ_M$ (1), 
where $p$ depends on
$G$. \emph{We fix this convention once and for all, and avoid repeating it in every
result.}

\begin{theorem}\label{thm:main}
Let $G(x,y)$ be a nonzero function of quasianalytic class $\cQ$, defined in a neighbourhood $U\times W$ of
$(a,b) \in \IR^n \times \IR$. Then there is a (perhaps larger) quasianalytic class $\cQ' \supseteqq \cQ$ 
such that,  if the equation
$$
G(x,y) = 0
$$
admits a formal power series solution $y=H(x)$ at the point $a$, with $b=H(a)$, then there is a solution 
$y = h(x) \in \cQ'(V)$, where $V$ is a neighbourhood of $a$ in $U$, and $H$ is the formal Taylor 
expansion of $h$ at $a$.
\end{theorem}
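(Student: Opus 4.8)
The plan is to reduce the general equation to a polynomial (Weierstrass-type) equation in $y$ by a sequence of blowings-up and power substitutions in the $x$-variables, and then solve the polynomial case by an induction on its degree in $y$. First I would dispose of trivial cases: if $G(a,b)\neq 0$ there is nothing to prove, and if $G$ vanishes identically in $y$ on a neighbourhood then any $h$ works; so assume $G(a,\cdot)\not\equiv 0$. The key reduction is Weierstrass preparation in the quasianalytic category. The difficulty is that quasianalytic classes are not in general closed under the division/preparation needed; this is precisely where the controlled loss of regularity enters. Using the blowing-up and power-substitution machinery (which, as explained in the introduction, is available for both categories of $\cQ$, with $\cQ_M$ replaced by some $\cQ'\subseteqq\cQ_{M^{(p)}}$ in the Denjoy--Carleman case), I would arrange, after a finite composition of such transformations $\pi\colon \widetilde{U}\to U$ defined near the relevant fibre over $a$, that the pullback $G\circ(\pi\times\mathrm{id})$ factors as a unit times a Weierstrass polynomial $P(\tilde x,y)=y^d + c_1(\tilde x)y^{d-1}+\cdots+c_d(\tilde x)$ with $c_i\in\cQ'$ and $c_i(\tilde a)=0$. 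The formal solution $y=H(x)$ pulls back to a formal solution $\widetilde H(\tilde x)$ of $P(\tilde x,y)=0$; conversely a $\cQ'$-solution of the polynomial equation downstairs would, after composing with $\pi$ and using that $\pi$ is a composition of blowings-up and power maps (so quasianalytic functions pull back to quasianalytic functions), need to be pushed back down. Here I would either work directly with the solution upstairs and invoke that the class $\cQ'$ is stable under the inverse operations on the monomial charts where $\pi$ is invertible, or — more robustly — carry the entire argument out upstairs and only at the end observe that the branch $h(x)$ reconstructed from the $\tilde x$-chart is well-defined and of class $\cQ'$ because its Taylor expansion is the prescribed $H$, which is quasianalytically rigid.

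So the core is the polynomial case, which I would prove by induction on $d=\deg_y P$. If $d=1$ then $P = y + c_1(\tilde x)$ and $h=-c_1\in\cQ'$ with the correct Taylor expansion. For $d>1$, differentiate: $\partial P/\partial y$ is a polynomial of degree $d-1$ with coefficients in $\cQ'$ and, since $\widetilde H$ solves $P=0$, either $\partial P/\partial y(\tilde x,\widetilde H)\not\equiv 0$ — in which case $\widetilde H$ is a simple root and the implicit function theorem, applied after another blowing-up to make the discriminant normal crossings, produces $h$ directly — or $\partial P/\partial y(\tilde x,\widetilde H)\equiv 0$, so $\widetilde H$ is also a formal solution of the lower-degree equation $\partial P/\partial y = 0$, and the induction hypothesis applies. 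Some bookkeeping is needed to see that the discriminant $\Delta(\tilde x)=\mathrm{disc}_y P$ is a nonzero element of $\cQ'$ (it is nonzero because $G\neq 0$, hence $P\neq 0$, and a Weierstrass polynomial with identically vanishing discriminant has a repeated factor, which can be removed in advance by factoring $P$ into its squarefree part — itself a finite, quasianalytically controlled operation after preparation), and that after blowing up to make $\Delta$ monomial the relevant root of $P$ is genuinely simple along the locus where we need it, so that the quasianalytic implicit function theorem (valid in any $\cQ$ closed under composition and inversion of units) applies chart by chart.

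The main obstacle, and the place where all the real work sits, is the \emph{Weierstrass preparation with loss of regularity}: showing that after finitely many blowings-up and power substitutions one can write $G = (\text{unit})\cdot P$ with $P$ a Weierstrass polynomial whose coefficients lie in a class $\cQ'$ sandwiched between $\cQ_M$ and $\cQ_{M^{(p)}}$ for an explicit $p=p(G)$. This requires a careful resolution-of-singularities argument adapted to the quasianalytic setting — controlling the order of $G$ along the exceptional divisor, using that $\cQ_M$-functions become, after monomialization of their singular locus, divisible by monomials with $\cQ_{M^{(p)}}$ quotients (the "shift" phenomenon referenced as \S\ref{subsec:shift}) — and then tracking that the finitely many such operations compose to give a single $p$. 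Everything else — the two trivial cases, the induction on degree, the chart-by-chart implicit function theorem, the final descent using quasianalytic rigidity of Taylor expansions — is routine once this preparation step and the stability of the relevant quasianalytic classes under blowing-up, power substitution, composition, and division by units are in hand.
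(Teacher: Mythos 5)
There is a genuine gap, and it sits exactly where you placed it: the claim that, after finitely many blowings-up and power substitutions in the $x$-variables, $G\circ(\pi\times\mathrm{id})$ factors as a unit times a Weierstrass polynomial in $y$ with quasianalytic coefficients. This is not merely "the place where all the real work sits"; it is a statement the paper deliberately avoids and which appears to be false or at best wide open. Just before Theorem~\ref{thm:poly} the authors remark that Theorem~\ref{thm:main} does \emph{not} evidently reduce to the monic-polynomial case ``because of the lack of a Weierstrass preparation theorem in quasianalytic classes,'' and Remark~\ref{rem:Weier}(3) explains the obstruction: Nazarov--Sodin--Volberg produce a quasianalytic Denjoy--Carleman class $\cQ_M$ and $f\in\cQ_M([0,1))$ admitting no quasianalytic extension past $0$, so that $g(x,y)=f(y^2)-x$ is $y$-regular of order $2$ but has no Weierstrass preparation with $u,p$ in \emph{any} quasianalytic class $\cQ'$. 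Blowings-up are vacuous in one $x$-variable, and it is not shown (nor clear) that power substitutions $x\mapsto\pm x^k$ rescue this. So the reduction your proof is built on is precisely the missing theorem, not a routine preliminary.

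The paper's actual argument circumvents preparation entirely. After achieving $y$-regularity of some order $d$ (Lemma~\ref{lem:yreg}, via resolution of the ideal generated by $(\p^iG/\p y^i)(x,0)$), it applies a Tschirnhaus shift to kill the degree-$(d-1)$ term (Lemma~\ref{lem:tschirn}), writes $G=\rho(x,y)y^d+\sum_{i\ge2}c_i(x)y^{d-i}$ with $\rho(0,0)\neq0$ --- note $\rho$ still depends on $y$, so this is \emph{not} a Weierstrass factorization --- monomializes the $c_i$ (Lemma~\ref{lem:res}), performs the power substitution $x\mapsto x^{d!}$, and then makes the decisive substitution $y\mapsto x^\al y$ together with division by $x^{d\al}$, producing $G_1(x,y)=x^{-d\al}G(x^{d!},x^\al y)$ which is $y$-regular of order $\leq d-1$ at every point $(0,y_0)$. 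Induction on $d$ then applies, and the descent is by the quasianalytic continuation results (Corollary~\ref{cor:quasiancontin}). The substitution in the $y$-variable is the idea your outline is missing; it replaces the Weierstrass step and is what makes the induction on $y$-order work without ever producing a monic polynomial.

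Two smaller issues in your polynomial-case induction: passing to the squarefree part of a Weierstrass polynomial with quasianalytic coefficients is itself a nontrivial division problem, and after monomializing the discriminant the distinguished root along the fibre over $a$ need not be simple at the origin of a chart, so the implicit function theorem cannot be applied directly there without further argument. These are secondary to the preparation gap above, and in any case the paper's proof of Theorem~\ref{thm:poly} uses a different device (splitting into coprime factors of $G(0,y)$ after normalization, via Lemma~\ref{lem:resultant}) rather than discriminant-based induction.
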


Of course, it is enough to find $h \in \cQ'(V)$ with formal Taylor expansion $H$ at $a$, 
since it follows that $G(x,h(x))=0$, by quasianalyticity.

In the case that $G(x,y)$ is a monic polynomial in $y$ with quasianalytic coefficients, there is
a result stronger than the above. Theorem \ref{thm:main} does not evidently reduce to the case
of a monic polynomial equation because of the lack of a Weierstrass preparation theorem 
in quasianalytic classes; see Section \ref{sec:Weier} (cf.\,\cite{Child}).

\begin{theorem}\label{thm:poly}
Let $\cQ$ denote a quasianalytic class.
Let $U$ denote a (connected) neighbourhood of the origin in $\IR^n$, with coordinates $x= (x_1,\ldots,x_n)$, and let
\begin{equation}\label{eq:poly}
G(x,y) = y^d + a_1(x)y^{d-1} + \cdots + a_d(x),
\end{equation}
where the coefficients $a_i \in \cQ(U)$. Let
$$
G(x,y) = \prod_{j=1}^k \left(y^{d_j} + B_{j1}(x)y^{d_j -1} + \cdots + B_{j,d_j}(x)\right)
$$
denote the irreducible factorization of $G(x,y)$ as an element of $\IR\llb x\rrb [y]$. Then there
is a (perhaps larger) quasianalytic class $\cQ' \supseteqq \cQ$ and a neighbourhood $V$ of $0$ in $U$,
such that each $B_{ji}$ is the formal Taylor expansion $\hb_{ji,0}$ at $0$ of an element $b_{ji} \in \cQ'(V)$, and
$$
G(x,y) = \prod_{j=1}^k \left(y^{d_j} + b_{j1}(x)y^{d_j -1} + \cdots + b_{j,d_j}(x)\right)
$$
in $\cQ'(V)[y]$.
\end{theorem}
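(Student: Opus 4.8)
The plan is to deduce Theorem \ref{thm:poly} from Theorem \ref{thm:main}, applied once — to a suitable one-variable polynomial equation — for each of the finitely many coefficients $B_{ji}$ separately, the factorization identity then coming for free from quasianalyticity. First I would observe that the identity $G=\prod_j(\cdots)$ is automatic once the coefficients have been realized: suppose we have found, on a connected neighbourhood $V$ of $0$, functions $b_{ji}\in\cQ'(V)$ whose formal Taylor expansion at $0$ is $B_{ji}$ for every $j,i$; then
$$
\prod_{j=1}^{k}\bigl(y^{d_j}+b_{j1}y^{d_j-1}+\cdots+b_{j,d_j}\bigr)-G(x,y)\in\cQ'(V)[y]
$$
has, coefficient by coefficient, vanishing Taylor expansion at $0$, since the Taylor expansion of the product is $\prod_j(y^{d_j}+B_{j1}y^{d_j-1}+\cdots+B_{j,d_j})$, which is the given formal factorization of $G$. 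By quasianalyticity on the connected set $V$ this difference vanishes identically, giving the desired identity in $\cQ'(V)[y]$. So it suffices to realize each $B_{ji}$ as the Taylor expansion at $0$ of a function in $\cQ'(V)$.

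The next step is to show that each $B_{ji}\in\IR\llb x\rrb$ is a root of a nonzero polynomial with coefficients in $\cQ(U)$. Passing to an algebraic closure $\overline{K}$ of $K:=\mathrm{Frac}(\IR\llb x\rrb)$, write $G=\prod_{l=1}^{d}(y-\theta_l)$ in $\overline{K}[y]$, so $e_m(\theta_1,\ldots,\theta_d)=(-1)^m a_m$ for each $m$. The formal factor $y^{d_j}+B_{j1}y^{d_j-1}+\cdots+B_{j,d_j}$ divides $G$ in $\IR\llb x\rrb[y]\subseteq\overline{K}[y]$, hence equals $\prod_{l\in S_j}(y-\theta_l)$ for an index set $S_j\subseteq\{1,\ldots,d\}$ with $\#S_j=d_j$, so that $B_{ji}=(-1)^{i}e_i(\theta_l:l\in S_j)$. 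Therefore $B_{ji}$ is a root of
$$
R_{ji}(x,T)\ :=\ \prod_{\substack{S\subseteq\{1,\ldots,d\}\\ \#S=d_j}}\bigl(T-(-1)^{i}e_i(\theta_l:l\in S)\bigr),
$$
which is symmetric in $\theta_1,\ldots,\theta_d$; by the fundamental theorem on symmetric functions its coefficients lie in $\IZ[a_1,\ldots,a_d]\subseteq\cQ(U)$, and it is monic in $T$, hence nonzero. (Equivalently: $B_{ji}$ is algebraic over $\mathrm{Frac}(\cQ(U))$, being a symmetric function of roots of $G$, and one clears denominators.)

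To finish, I would apply Theorem \ref{thm:main} to $R_{ji}$ — a nonzero function of class $\cQ$ on $U\times\IR$, being polynomial in $T$ with coefficients in $\cQ(U)$ — at the point $\bigl(0,B_{ji}(0)\bigr)$ with the formal solution $T=B_{ji}(x)$. This yields a class $\cQ'_{ji}\supseteq\cQ$ (equal to $\cQ$ in the definable case, and $\subseteq\cQ_{M^{(p_{ji})}}$ with $p_{ji}$ depending on $R_{ji}$, hence on $G$, in the Denjoy--Carleman case), a neighbourhood $V_{ji}$ of $0$, and $b_{ji}\in\cQ'_{ji}(V_{ji})$ with Taylor expansion $B_{ji}$ at $0$. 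Taking $V:=\bigcap_{j,i}V_{ji}$ (shrunk to be connected), $p:=\max_{j,i}p_{ji}$, and $\cQ':=\cQ_{M^{(p)}}\cap\cC^\infty_M$ (resp.\ $\cQ'=\cQ$), one has $b_{ji}\in\cQ'(V)$ for all $j,i$, since $\cQ_{M^{(p')}}\subseteq\cQ_{M^{(p)}}$ whenever $p'\le p$; the first paragraph then delivers the factorization in $\cQ'(V)[y]$.

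I expect no serious obstacle beyond Theorem \ref{thm:main} itself, which does all the work; the reduction above is formal, using only the ring and quasianalyticity properties of $\cQ'$ together with the classical description of the coefficients of a factor of $G$ as symmetric functions of a subset of its roots. The one point that needs a little care is the bookkeeping for the loss of regularity — that a single $\cQ'\subseteq\cQ_{M^{(p)}}$ with $p$ depending only on $G$ absorbs the finitely many uses of Theorem \ref{thm:main} — which is routine.
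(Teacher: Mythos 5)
Your proof is correct, but it takes a genuinely different route from the paper's. The paper proves Theorem~\ref{thm:poly} independently of Theorem~\ref{thm:main}, in Section~\ref{sec:poly}, by directly repeating the resolution-of-singularities/power-substitution/induction-on-degree scheme: Lemma~\ref{lem:poly1} monomializes $a_i^{d!/i}$ by blowings-up, Lemma~\ref{lem:poly2} extracts the monomial from $a_i$ itself after a power substitution $x\mapsto x^{d!}$, and Lemma~\ref{lem:poly3} (via Lemma~\ref{lem:resultant}, the implicit-function splitting of a monic polynomial whose constant-coefficient part factors coprimely) produces a nontrivial factorization $G=G_1G_2$ of strictly smaller degrees, after which one recurses. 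Your proof instead deduces Theorem~\ref{thm:poly} as a corollary of Theorem~\ref{thm:main}, by the classical device of expressing each $B_{ji}$ as $\pm e_i$ of a subset of the roots of $G$ in an algebraic closure of $\mathrm{Frac}\,\IR\llb x\rrb$, hence as a root of the monic auxiliary polynomial $R_{ji}(x,T)=\prod_{\#S=d_j}\bigl(T-(-1)^ie_i(\theta_l:l\in S)\bigr)$, whose coefficients are symmetric in $\theta_1,\ldots,\theta_d$ and therefore polynomials in $a_1,\ldots,a_d$, so lie in $\cQ(U)$. Applying Theorem~\ref{thm:main} to each $R_{ji}$ with formal solution $T=B_{ji}(x)$ and invoking quasianalyticity to upgrade coefficientwise equality of Taylor expansions to a factorization identity on a connected $V$ is exactly right; your bookkeeping of the loss of regularity (taking $p=\max p_{ji}$, using $\cQ_{M^{(p')}}\subseteq\cQ_{M^{(p)}}$ for $p'\le p$, which indeed follows from logarithmic convexity) is also sound. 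The reduction is non-circular, since Section~\ref{sec:main} does not use Theorem~\ref{thm:poly}; it would simply require proving Theorem~\ref{thm:main} first. What the paper's independent proof buys is a self-contained argument that avoids passing through the full generality of Theorem~\ref{thm:main} (in particular the more delicate Lemma~\ref{lem:fract}); what yours buys is brevity and the conceptual observation that the irreducible factorization, phrased in the introduction as a ``result stronger than'' Theorem~\ref{thm:main}, is in fact a formal consequence of it once one remembers that the coefficients of the factors of a monic polynomial are algebraic over its coefficients.
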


These theorems and the other results in Sections \ref{sec:contin}--\ref{sec:Weier} are not, however, particular to 
quasianalytic classes of types (1),\,(2) above. For $G(x,y)=0$ of any given quasianalytic class $\cQ$, the
solutions $y=h(x)$ will be $\cC^\infty$ functions whose composites by a certain finite sequence $\s$ of blowings-up 
and power substitutions (depending on $G$) belong to $\cQ$. Such functions satisfy the axiom of quasianalyticity 
(see Definitions \ref{def:quasian}). In the case (2) of functions definable in a given polynomially-bounded 
$o$-minimal structure, such $\cC^\infty$ functions are evidently definable in the same structure, so we can take 
$\cQ' = \cQ$. In Section \ref{sec:reg}, we will show that, if $\cQ$ is a quasianalytic Denjoy-Carleman class $\cQ_M$, 
then such $\cC^\infty$ functions belong to 
$\cQ_{M^{(p)}}$, for some $p$ depending on the sequence $\s$.

Theorems \ref{thm:main}, \ref{thm:poly} and the related results are proved using techniques of 
\emph{quasianalytic continuation} that
are developed in Section \ref{sec:contin}. 
Quasianalyticity provides a generalization of the classical property of analytic continuation.
We use the axiom of quasianalyticity to show that,
if the formal Taylor expansion $\hf_a$ of a quasianalytic function $f$ at a 
given point $a$ is the composite $H\circ\hs_a$ of a formal power series $H$ with the formal 
expansion of a suitable quasianalytic mapping $\s$, then this formal composition property extends 
to a neighbourhood of $a$.
The main problems are solved by reducing $G(x,y)$ to a simpler form by composing
with an appropriate sequence of blowings-up and power substitutions, finding a solution of
the simpler problem, and using the quasianalytic continuation property to descend to a
solution of the original equation.

\section{Quasianalytic classes}\label{sec:quasian}

We consider a class of functions $\cQ$ given by the association, to every 
open subset $U\subset \IR^n$, of a subalgebra $\cQ(U)$ of $\cC^\infty (U)$ containing
the restrictions to $U$ of polynomial functions on $\IR^n$, and closed under composition 
with a $\cQ$-mapping (i.e., a mapping whose components belong to $\cQ$). 
We assume that $\cQ$ determines a sheaf of local $\IR$-algebras of $\cC^\infty$ functions on $\IR^n$,
for each $n$, which we also denote $\cQ$.

\begin{definition}[quasianalytic classes]\label{def:quasian}
We say that $\cQ$ is \emph{quasianalytic} if it satisfies the following three axioms:

\begin{enumerate}
\item \emph{Closure under division by a coordinate.} If $f \in \cQ(U)$ and
$$
f(x_1,\dots, x_{i-1}, a, x_{i+1},\ldots, x_n) = 0,
$$
where $a \in \IR$,  then $f(x) = (x_i - a)h(x),$ where $h \in \cQ(U)$.

\smallskip
\item \emph{Closure under inverse.} Let $\varphi : U \to V$
denote a $\cQ$-mapping between open subsets $U$, $V$ of $\IR^n$.
Let $a \in  U$ and suppose that the Jacobian matrix
$$
\frac{\partial \varphi}{\partial x} (a) := \frac{\partial
(\varphi_1,\ldots, \varphi_n) }{\partial (x_1,\ldots, x_n)}(a)
$$
is invertible. Then there are neighbourhoods $U'$ of $a$ and $V'$ of 
$b := \varphi(a)$, and a $\cQ$-mapping  $\psi: V' \to U'$ such that
$\psi(b) = a$ and $\psi\circ \varphi$  is the identity mapping of
$U '$.

\smallskip
\item \emph{Quasianalyticity.} If $f \in \cQ(U)$ has Taylor expansion zero
at $a \in U$, then $f$ is identically zero near $a$.
\end{enumerate}
\end{definition}

\begin{remarks}\label{rem:axioms} (1)\, Axiom \ref{def:quasian}(1) implies that, 
if $f \in \cQ(U)$, then all partial derivatives of $f$ belong to $\cQ(U)$. 

\smallskip\noindent
(2)\, Axiom \ref{def:quasian}(2) is equivalent to the property that the implicit function theorem holds for functions of 
class $\cQ$.  It implies that the reciprocal of a nonvanishing function of class $\cQ$ is also of class $\cQ$.

\smallskip\noindent
(3)\, In the case of $\cC^\infty$ functions definable in a given polynomially bounded
$o$-minimal structure, we can define a quasianalytic class $\cQ$ in the axiomatic
framework above by taking $\cQ(U)$ as the subring of $\cC^\infty(U)$ of functions
$f$ such that $f$ is definable in some neighbourhood of any point of $U$ (or, equivalently,
such that $f|_V$ is definable, for every relatively compact definable open $V\subset U$).
\end{remarks} 

The elements of a quasianalytic class $\cQ$ will be called \emph{quasianalytic functions}. 
A category of manifolds and mappings of class $\cQ$ can be defined in a standard way. The category 
of $\cQ$-manifolds is closed under blowing up with centre a $\cQ$-submanifold \cite{BMselecta}.

Resolution of singularities holds in a quasianalytic class \cite{BMinv}, \cite{BMselecta}. Resolution of
singularities of an ideal does not require that the ideal be finitely generated; see \cite[Thm.\,3.1]{BMV}.
Resolution of singularities of an ideal in a quasianalytic class is the main tool used in this article.

\subsection{Quasianalytic Denjoy-Carleman classes}\label{subsec:DC}
We use standard multiindex notation: Let $\IN$ denote the nonnegative integers. If $\al = (\al_1,\ldots,\al_n) \in
\IN^n$, we write $|\al| := \al_1 +\cdots +\al_n$, $\al! := \al_1!\cdots\al_n!$, $x^\al := x_1^{\al_1}\cdots x_n^{\al_n}$,
and $\p^{|\al|} / \p x^{\al} := \p^{\al_1 +\cdots +\al_n} / \p x_1^{\al_1}\cdots \p x_n^{\al_n}$. We write $(i)$ for the
multiindex with $1$ in the $i$th place and $0$ elsewhere.

\begin{definition}[Denjoy-Carleman classes]\label{def:DC}
Let $M = (M_k)_{k\in \IN}$ denote a sequence of positive real numbers which is \emph{logarithmically
convex}; i.e., the sequence $(M_{k+1} / M_k)$ is nondecreasing.
A \emph{Denjoy-Carleman
class} $\cQ = \cQ_M$ is a class of $\cC^\infty$ functions determined by the following condition: A function 
$f \in \cC^\infty(U)$ (where $U$ is open in $\IR^n$) is of class $\cQ_M$ if, for every compact subset $K$ of $U$,
there exist constants $A,\,B > 0$ such that
\begin{equation}\label{eq:DC}
\left|\frac{\p^{|\al|}f}{\p x^{\al}}\right| \leq A B^{|\al|} \al! M_{|\al|}
\end{equation}
on $K$, for every $\al \in \IN^n$.
\end{definition}

\begin{remarks}\label{rem:DC}
(1)\, The logarithmic convexity assumption implies that
$M_jM_k \leq M_0M_{j+k}$, for all $j,k$, and that
the sequence $((M_k/M_0)^{1/k})$ is nondecreasing.
The first of these conditions guarantees that $\cQ_M(U)$ is a ring, and 
the second that $\cQ_M(U)$ contains the ring $\cO(U)$ of real-analytic functions on $U$,
for every open $U\subset \IR^n$.
(If $M_k=1$, for all $k$, then $\cQ_M = \cO$.)

\smallskip\noindent
(2)\, $\cQ_M$ can be defined equivalently using inequalities of the form $|\p^{|\al|}f / \p x^{\al}| \leq
A B^{|\al|} |\al|! M_{|\al|}$, instead of \eqref{eq:DC}. This is true because, on the one hand, $\al! \leq |\al|!$, and,
on the other, $|\al|! \leq n^{|\al|} \al!$, since
$$
n^\al = (1+\cdots +1)^{\al_1+\cdots +\al_n} = \sum\frac{(\al_1+\cdots +\al_n)!}{\al_1!\cdots\al_n!},
$$
where the sum is over all partitions $|\al| = \al_1 +\cdots +\al_n$ of $|\al|$.
\end{remarks}

A Denjoy-Carleman class $\cQ_M$ is a quasianalytic class in the sense of Definition \ref{def:quasian}
if and only if the sequence
$M = (M_k)_{k\in \IN}$ satisfies the following two assumptions in addition to those
of Definition \ref{def:DC}.
\begin{enumerate}
\item[(a)] $\displaystyle{\sup \left(\frac{M_{k+1}}{M_k}\right)^{1/k} < \infty}$.

\smallskip
\item[(b)] $\displaystyle{\sum_{k=0}^\infty\frac{M_k}{(k+1)M_{k+1}} = \infty}$.
\end{enumerate}

It is easy to see that the assumption (a) implies that $\cQ_M$ is closed under differentiation.
The converse of this statement is due to S. Mandelbrojt \cite{Mandel}. In a Denjoy-Carleman class
$\cQ_M$, closure under differentiation is equivalent to the axiom \ref{def:quasian}(1) of closure under division by a
coordinate---the converse of Remark \ref{rem:axioms}(1) is a consequence of the fundamental
theorem of calculus:
\begin{equation}\label{eq:FTC}
f(x_1,\ldots,x_n) - f(x_1,\ldots, 0,\ldots, x_n) = x_i\int_0^1\frac{\p f}{\p x_i}(x_1,\ldots,tx_i,\ldots,x_n) dt
\end{equation}
(where $0$ in the left-hand side is in the $i$th place).

According to the Denjoy-Carleman theorem, the class $\cQ_M$ is quasianalytic (axiom \ref{def:quasian}(3))
if and only if the assumption (b) holds \cite[Thm.\,1.3.8]{Horm}.

Closure of the class $\cQ_M$ under composition is due to Roumieu \cite{Rou} and closure under
inverse to Komatsu \cite{Kom}; see \cite{BMselecta} for simple proofs. The assumptions 
of Definition \ref{def:DC} and (a),\,(b)
above thus guarantee that $\cQ_M$ is a quasianalytic class, in the sense of Definition \ref{def:quasian}.

If $\cQ_M$, $\cQ_N$ are Denjoy-Carleman classes, then $\cQ_M(U) \subseteq \cQ_N(U)$, for all $U$,
if and only if $\sup \left(M_k /N_k\right)^{1/k} < \infty$ (see \cite[\S1.4]{Th1}); in this case, we write
$\cQ_M \subseteq \cQ_N$.

\subsection{Shifted Denjoy-Carleman classes}\label{subsec:shift}
Given $M = (M_j)_{j\in \IN}$ and a positive integer $p$, let $M^{(p)}$ denote the sequence $M^{(p)}_j := M_{pj}$.

If $M$ is logarithmically convex, then $M^{(p)}$ is logarithmically convex:
$$
\frac{M_{kp}}{M_{(k-1)p}} = \frac{M_{kp}}{M_{kp-1}} \cdots \frac{M_{kp -p+1}}{M_{kp - p}} \leq
\frac{M_{kp+p}}{M_{kp+p-1}} \cdots \frac{M_{kp+1}}{M_{kp}} = \frac{M_{(k+1)p}}{M_{kp}}.
$$
Therefore, if $\cQ_M$ is a Denjoy-Carleman class, then so is $\cQ_{M^{(p)}}$. Clearly,
$\cQ_M \subseteqq \cQ_{M^{(p)}}$. Moreover, the assumption (a)
above for $\cQ_M$ immediately implies the same condition for $\cQ_{M^{(p)}}$. In general, however, it is not true 
that assumption (b) (i.e., the quasianalyticity axiom (3)) for $\cQ_M$ implies (b) for $\cQ_{M^{(p)}}$ \cite[Example 6.6]{Nelim}.

In particular, in general,
$\cQ_{M^{(p)}} \supsetneq \cQ_M$. Moreover, $\cQ_{M^{(2)}}$ is the smallest Denjoy-Carleman class containing
all $g \in \cC^\infty(\IR)$ such that $g(t^2)\in \cQ_M(\IR)$ \cite[Rmk.\,6.2]{Nelim}.

\section{Regularity estimates}\label{sec:reg}

Let $y=\s(x)$ denote a mapping of Denjoy-Carleman class $\cQ_M$ (Definition \ref{def:DC}). Given a $\cC^\infty$ function
$g(y)$ such $f(x) := g(\s(x))$ is of class $\cQ_M$, what can we say about the class of $g$? We will answer this
question for mappings $\s$ of two important kinds that will be needed for our main results: power substitutions
and blowings-up.

\subsection{Power substitutions}\label{sec:power} 
The following result is proved in \cite[Thm.\,6.1]{Nelim} 
for functions of a single variable, by an argument different from that below.

\begin{lemma}\label{lem:power}
Consider a Denjoy-Carleman class  $\cQ_M$.
Let $U = \prod_{i=1}^n (-r_i, r_i) \subset \IR^n$, where each $r_i > 0$, and let $\s: U \to V$ denote a 
\emph{power substitution}
\begin{equation*}
(y_1,\ldots,y_n) = (x_1^{k_1},x_2^{k_2},\ldots,x_n^{k_n}),
\end{equation*}
where each $k_i$ is a positive integer and $V = \prod (-r_i^{k_i}, r_i^{k_i})$.
Let $g \in \cC^{\infty}(V)$ and let $f = g\circ\s$. If $f \in \cQ_M(U)$, then 
$g \in \cQ_{M^{(p)}}(\sigma(U))$, where $p = \max k_i$.
\end{lemma}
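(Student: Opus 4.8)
The plan is to write every derivative $\partial^\beta g$ of the sought‑for function as an explicit integral of \emph{one} high‑order derivative of $f$, by combining confluent divided differences with a Hermite–Genocchi type formula; the controlled loss of regularity then drops out, with the sharp shift.

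Fix a point $t_0=\sigma(a)$ of $\sigma(U)$; I want a bound on $\partial^\beta g(t_0)$, uniform for $t_0$ in a compact subset, and I would carry out the estimate in all variables at once (treating the $x_i$ successively would only produce the composite shift $M^{(k_1\cdots k_n)}$, while a simultaneous treatment yields $M^{(\max k_i)}$). Writing $|\beta|=m$ and $\gamma:=(k_1\beta_1,\dots,k_n\beta_n)$, the target identity is
\[
\frac{\partial^\beta g}{\beta!}(t_0)=\int_{\prod_{i}\Delta_{k_i\beta_i}}\partial^\gamma f\Big(\textstyle\sum\cdots\Big)\,d\tau,
\]
where each $\Delta_{k_i\beta_i}$ is a standard simplex and the $i$-th coordinate of the argument ranges over convex combinations of the $k_i$-th roots of $(t_0)_i$. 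Since $\mathrm{vol}\,\Delta_d=1/d!$ and, on a fixed compact set, $|\partial^\gamma f|\le AB^{|\gamma|}\gamma!\,M_{|\gamma|}$, the product of simplex volumes cancels $\gamma!=\prod_i(k_i\beta_i)!$, leaving $|\partial^\beta g(t_0)|\le A\,\beta!\,B^{|\gamma|}M_{|\gamma|}$. As $|\gamma|=\sum_i k_i\beta_i\le p\,|\beta|$ with $p=\max k_i$, and $M$ may be taken nondecreasing, this is $\le A\,\beta!\,(B^p)^{|\beta|}M^{(p)}_{|\beta|}$, which is exactly the estimate defining $\mathcal{Q}_{M^{(p)}}$ (in the equivalent $\gamma!$-form of Remark \ref{rem:DC}(2)). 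The constants $A,B$ depend only on the compact set in the source, so uniformity is automatic.

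Because $\sigma$ is diagonal, this identity tensors over the variables, and the whole problem reduces to the one-variable statement: for $f(x)=g(x^k)$ with $f\in\mathcal{Q}_M$,
\[
\frac{g^{(m)}(t_0)}{m!}=g[\underbrace{t_0,\dots,t_0}_{m+1}]=f[\,k\text{-th roots of }t_0,\ \text{with multiplicities}\,]=\int_{\Delta_{km}}f^{(km)}(\cdots)\,d\tau,
\]
the middle equality being the crux (at $t_0=0$ it degenerates to the classical $g^{(m)}(0)=\frac{m!}{(km)!}f^{(km)}(0)$). For $k=2$ this is elementary and stays real: $f$ is even, so its interpolant at the $2(m+1)$ real points $\pm s_i^{1/2}$ has the form $Q(x^2)$ with $Q$ interpolating $g$ at $\{s_i\}$; reading off the coefficient of $x^{2m}$ in the Newton form identifies $g[s_0,\dots,s_m]$ with the divided difference of $f$ at $x_0,-x_0,\dots,x_{m-1},-x_{m-1},x_m$, and Hermite–Genocchi (the convex combinations of $\pm a$ being just $[-a,a]\subset\mathbb{R}$) gives the integral formula, whence $|g^{(m)}(t_0)|\le A\,m!\,(B^2)^m M^{(2)}_m$. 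Iterating the case $k=2$ settles all $k$ that are powers of $2$ and, in general, reduces the statement to $k$ odd.

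The main obstacle is $k$ odd, where $t_0$ has a single real $k$-th root and the interpolation must involve the complex roots $\zeta\,t_0^{1/k}$ ($\zeta^k=1$). The interpolation and Newton-coefficient computation remain valid for polynomials, so one runs them on an almost‑holomorphic extension of $f$ (or on its Taylor polynomials); the delicate point is that the complex roots sit at distance $\gtrsim|t_0|^{1/k}$ from $\mathbb{R}$, so the correction terms coming from $\bar\partial$ of the extension, which carry that distance to every power, are negligible — uniformly for $t_0$ in a compact set — and the identity, hence the estimate $|g^{(m)}(t_0)|\le A\,m!\,(B^k)^m M^{(k)}_m$, passes to the limit. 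Making this uniformity precise, together with the combinatorics of the Newton coefficients at the confluent complex nodes, is the step that needs care; the rest is routine bookkeeping.
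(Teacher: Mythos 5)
Your proposal takes a genuinely different route from the paper: you compute $\partial^\beta g$ via confluent divided differences and Hermite--Genocchi, whereas the paper never leaves the real line. The paper's proof (Claim~\ref{claim:power}) proceeds by induction on $|\beta|$, using the identity
$\frac{\partial}{\partial y_1} = \frac{1}{k_1 x_1^{k_1-1}}\frac{\partial}{\partial x_1}$
together with $k_1-1$ applications of the fundamental theorem of calculus to turn the singular factor $x_1^{1-k_1}$ into an iterated integral; the combinatorics are then absorbed into an explicit factor $\Gamma(k,\alpha,\beta)$ and a checkable identity. That argument is elementary, uniform in $k$, and never needs to evaluate $f$ off the real axis.

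The $k=2$ case in your proposal is correct and attractive: since $f$ is even, its Hermite interpolant at $\{\pm x_i\}$ is of the form $Q(x^2)$, the Newton coefficient identifies $g[s_0,\dots,s_m]$ with $f[x_0,-x_0,\dots,x_{m-1},-x_{m-1},x_m]$, Hermite--Genocchi gives the integral bound, and iteration handles $k$ a power of $2$. The multivariable reduction by tensoring over the coordinates of a diagonal $\sigma$ is also fine. But the odd-$k$ case (and hence any $k$ that is not a power of $2$) is a genuine gap, not ``routine bookkeeping.'' For odd $k$ and $t_0\neq 0$, the point $t_0$ has a single real $k$-th root of multiplicity one, so your divided-difference identity necessarily forces complex nodes $\zeta\,t_0^{1/k}$ at distance $\sim |t_0|^{1/k}$ from $\mathbb R$. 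A quasianalytic $f$ is not analytic, so it has no holomorphic extension; an almost-holomorphic (Dyn'kin-type) extension $F$ satisfies only $|\bar\partial F(z)|\lesssim h_M(|\mathrm{Im}\,z|)$, and the Cauchy--Pompeiu correction terms you would need to discard involve the $(km)$-th derivative of $F$ at points with $|\mathrm{Im}\,z|\sim|t_0|^{1/k}$, integrated against $\bar\partial F$. Whether these corrections are negligible \emph{uniformly in $t_0$ and in $m$}, with constants compatible with the target bound $A\,m!\,B^{km}M_{km}$, depends delicately on $M$ and on the quality of the extension, and you give no estimate. Until that is supplied, the proof covers only $k$ a power of two; the paper's real-variable induction sidesteps the difficulty entirely and handles all $k$ at once.
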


\begin{proof}
Let $K \subset U$ denote the compact set $\prod_{i=1}^n [-r_i/2, r_i/2]$. Since $f \in \cQ_M(U)$, there are constants 
$A>0,\, B\geq 1$ such that
\begin{equation}\label{eq:power1}
\left| \frac{\p^{|\al|} f}{\p x^{\al}} \right| \leq   A B^{|\al|}\al!M_{|\al|}
\end{equation}
on the compact set $K$, for all $\al \in \IN^n$. We will show that 
\begin{equation}\label{eq:power2}
\left| \frac{\p^{|\be|} g}{\p y^{\be}}\right|  \leq  A B^{p|\beta|} \beta! M_{p |\beta|}
\end{equation}
on $\s(K)$, for all $\be \in \IN^n$. In the following, we will not explicitly 
write ``on $K$'' or ``on $\s(K)$''---all estimates will be understood
to mean on these sets (and the left-hand side of \eqref{eq:power1} or \eqref{eq:power2} will
sometimes be understood to mean the maximum on one of these sets, when the meaning is clear
from the context). We will use the notation
\begin{equation*}
g^{(\be)} := \frac{\p^{|\be|} g}{\p y^{\be}}.
\end{equation*}

\begin{claim}\label{claim:power}
For each $\beta\in \IN^n$, 
\begin{equation}\label{eq:claim}
\left| \frac{\p^{|\al|} (g^{(\be)}\circ\s)}{\p x^{\al}} \right| \leq   A B^{p|\be|+|\al|}(\al +\be)!M_{p|\be| +|\al|} \Ga(k,\al,\be),
\end{equation}
for all $\al \in \IN^n$, where
\begin{equation*}
\Ga(k,\al,\be) := \frac{\al!\prod_{i=1}^n \prod_{j=1}^{\be_i}(jk_i + \al_i)}{k^\be (\al + \be)!},
\end{equation*}
and $k := (k_1,\ldots,k_n)$, $k^\be := k_1^{\be_1}\cdots k_n^{\be_n}$.
\end{claim}

Note that $\Ga(k,0,\be) = \left(\prod_{i=1}^n \be_i! k_i^{\be_i}\right) / (k^\be \be!) = 1$.
In the case that $\al = 0$, therefore, 
\eqref{eq:claim} reduces to \eqref{eq:power2}, so the lemma follows from the claim.

We will prove Claim \ref{claim:power} by induction on $|\beta|$. Note that $\Ga(k,\al,0) = 1$. The claim is 
therefore true when $\beta = 0$, because in this case
\eqref{eq:claim} reduces to \eqref{eq:power1}.

Assume that \eqref{eq:claim} holds for a given multiindex $\beta$. It is clearly then enough to prove
\eqref{eq:claim} for $\ga := \be + (1)$. The partial derivative $\p/\p y_1$ transforms by $\s$ as follows:
\begin{equation}\label{eq:trans}
\frac{\p}{\p y_1} =  \frac{1}{ k_1 \, x_1^{k_1-1}} \frac{\p}{\p x_1};\qquad \text{i.e.,\,\, }
\frac{\p h}{\p y_1} \circ \s =  \frac{1}{ k_1 \, x_1^{k_1-1}} \frac{\p (h\circ\s)}{\p x_1}, \quad h\in \cC^\infty(V).
\end{equation}
Therefore,
\begin{equation*}
\frac{\p g^{(\be)}}{\p y_1}\circ\s = \frac{1}{k_1}\int_{[0,1]^{k_1-1}} \frac{\partial^{k_1} (g^{(\be)}\circ\s)}{\partial x_1^{k_1}}\left(t_1\cdots t_{k_1-1}x_1, x_2, \ldots, x_n\right) Q_0(t)\,dt,
\end{equation*}
by \eqref{eq:FTC} (applied $k_1-1$ times), where $t = (t_1,\ldots, t_{k_1-1})$ and $Q_0(t)$ denotes the polynomial
$Q_0(t) := t_1^{k_1-2} t_2^{k_1-3}\cdots t_{k_1-2}$. It follows that, for all 
$\alpha \in \IN^n$,
\begin{align*}
\left| \frac{\p^{|\al|} (g^{(\ga)}\circ\s)}{\p x^\al}\right| &= \left|\frac{\p^{|\al|}}{\p x^\al}\left(\frac{\p g^{(\be)}}{\p y_1}\circ\s\right)\right|
\\[1em]
&\leq \frac{1}{k_1} \left| \frac{\p^{|\al|+k_1} (g^{(\be)}\circ\s)}{\p x^{\al + k_1(1)}}\right|\cdot \int_{[0,1]^{k_1-1}} Q_\al(t)dt,
\end{align*}
where $Q_{\al}(t) := Q_0(t)(t_1\cdots t_{k_1-1})^{\al_1} = t_1^{k_1-2+\al_1}t_2^{k_1-3+\al_1}\cdots t_{k-1-1}^{\al_1}$,
so that
\begin{equation*}
\int_{[0,1]^{k_1-1}}  Q_{\al}(t) dt = \frac{1}{(k_1-1+\al_1)(k_1-2+\al_1)\cdots(1+\al_1)} = \frac{(k_1+\al_1) \al_1!}{(k_1+\alpha_1)!}.
\end{equation*}
By the induction hypothesis,
\begin{multline*}
\left| \frac{\p (g^{(\ga)}\circ\s)}{\p x^\al}\right| \leq A B^{p|\be| + |\al| + k_1} (\al + \ga)! M_{p|\be| + |\al| + k_1}\\
\cdot \frac{1}{k_1}\cdot \frac{(\al_1 + \be_1 + k_1)!}{(\al_1 + \be_1 + 1)!}\cdot \Ga(k,\al + k_1(1), \be)\cdot \frac{(k_1+\al_1) \al_1!}{(k_1+\alpha_1)!}.
\end{multline*}
Since $p|\be| + |\al| + k_1 \leq p|\ga| + |\al|$, we get
$$
\left| \frac{\p^{|\al|} (g^{(\ga)}\circ\s)}{\p x^\al}\right| \leq A B^{p|\ga| + |\al|} (\al + \ga)! M_{p|\ga| + |\al|}\Ga(k,\al,\ga),
$$
for all $\al \in \IN^n$, as required, using the following combinatorial identity, which can be easily checked:
$$
\frac{1}{k_1}\cdot \frac{(\al_1 + \be_1 + k_1)!}{(\al_1 + \be_1 + 1)!}\cdot \frac{(k_1+\al_1) \al_1!}{(k_1+\alpha_1)!}
\cdot\Ga(k,\al + k_1(1), \be)\, =\, \Ga(k,\al,\be + (1)).
$$
\end{proof}

\begin{remark}\label{rem:power}
It follows from Lemma \ref{lem:power} that the same conclusion holds for a mapping $\s$ of the form
$(y_1,\ldots,y_n) = (\ep_1 x_1^{k_1},\ldots, \ep_n x_n^{k_n})$, where each $\ep_i \in  \{-1,1\}$.
\end{remark}

\subsection{Blowing up}\label{sec:blup} 

\begin{lemma}\label{lem:blup}
Let $\cQ_M$ denote a Denjoy-Carleman class. 
Let $W$ be an open subset of $\IR^n$, and 
let $\s:Z \to W$ denote a blowing-up with centre a $\cQ_M$-submanifold of $W$.
Let $g \in \cC^{\infty}(W)$ and let $f = g\circ\s$. If $f \in \cQ_M(Z)$, then $g \in \cQ_{M^{(2)}}(W)$.
\end{lemma}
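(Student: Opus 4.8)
The idea is to reduce the blowing-up statement to the power-substitution statement, Lemma~\ref{lem:power}, by working in local charts. A blowing-up $\s:Z\to W$ with centre a $\cQ_M$-submanifold $C$ is, in suitable local coordinates adapted to $C$ (which can be chosen of class $\cQ_M$ by closure under inverse), described in each of the standard affine charts by a monomial-type map. So the first step is local: fix a point $w\in W$; if $w\notin C$ then $\s$ is a local diffeomorphism near the fibre and $g$ is of class $\cQ_M$ near $w$ by the inverse axiom, hence certainly of class $\cQ_{M^{(2)}}$. The interesting case is $w\in C$, where after choosing $\cQ_M$-coordinates $x=(x_1,\dots,x_n)$ centred at $w$ with $C=\{x_1=\cdots=x_r=0\}$, the blowing-up in the $i$th chart ($1\le i\le r$) has the form
\begin{equation*}
\s_i(u_1,\dots,u_n)=(u_1 u_i,\dots,u_{i-1}u_i,\,u_i,\,u_{i+1}u_i,\dots,u_r u_i,\,u_{r+1},\dots,u_n),
\end{equation*}
so $f\circ(\text{chart}) = g\circ\s_i$ is of class $\cQ_M$ in the $u$-variables.

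**Reducing the chart map to a power substitution.** The map $\s_i$ is not itself a power substitution, but it factors as a composition of an invertible $\cQ_M$-change of coordinates-in-the-source-and-target type operation together with the single quadratic substitution $u_i\mapsto u_i^2$ on one variable — or more precisely, one shows directly that $g$ pulls back to $\cQ_M$ along $\s_i$ iff it pulls back to $\cQ_M$ along the power substitution $(u_i,\,\text{rest})\mapsto (u_i^2,\,\text{rest})$, after absorbing the "sliding" coordinates $u_j\mapsto u_j u_i$ into an invertible transformation. Concretely: the shear $(u_1,\dots,u_n)\mapsto(u_1 u_i,\dots,u_i,\dots,u_n)$ is, for $u_i\ne 0$, a $\cQ_M$-diffeomorphism with $\cQ_M$ inverse, so composing with it preserves and reflects membership in $\cQ_M$ on the locus $u_i\ne 0$; and membership in $\cQ_M$ on an open set is detected by the derivative bounds on compact subsets, which one can push across the closure $\{u_i=0\}$ by continuity of the derivatives (they are all continuous, $g$ being $\cC^\infty$ on $W$). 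Thus the question is localized to: $g\circ(u_i\mapsto u_i^2)$ of class $\cQ_M$ in one variable (with the other variables as harmless parameters) forces $g$ of class $\cQ_{M^{(2)}}$, which is exactly Lemma~\ref{lem:power} with $p=\max k_i=2$. Finally one reassembles the finitely many charts and uses a partition-of-unity / local-to-global argument together with the fact that $\cQ_{M^{(2)}}$ is a sheaf to conclude $g\in\cQ_{M^{(2)}}(W)$.

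**The main obstacle.** The delicate point is the passage across the exceptional locus $\{u_i=0\}$, i.e.\ justifying that a derivative estimate for $g\circ\s_i$ valid where $u_i\ne 0$ — together with the smoothness of $g$ on all of $W$ — yields the required estimate for $g$ on a full neighbourhood of a point of $C$, including points where the Jacobian of $\s_i$ degenerates. One has to be careful that the "sliding" variables $u_j = $ (target coordinate)$/u_i$ genuinely contribute only an invertible $\cQ_M$-factor and do not secretly cost an extra shift in $M$; the cleanest route is probably to avoid factoring $\s_i$ literally and instead mimic the proof of Lemma~\ref{lem:power} directly for $\s_i$: set up the analogue of Claim~\ref{claim:power}, integrate $\p/\p u_i$ once (as in \eqref{eq:trans}–\eqref{eq:FTC}, which introduces the single power $k_i=2$ and hence the shift $M\mapsto M^{(2)}$), while the derivatives $\p/\p u_j$ for $j\ne i$ corresponding to the sheared variables transform linearly and introduce no shift. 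I expect the bookkeeping in this direct argument — tracking how a mixed derivative $\p^{|\be|}g/\p y^\be$ pulls back through $\s_i$, and identifying the combinatorial factor analogous to $\Ga(k,\al,\be)$ — to be the bulk of the work, but no single step should be conceptually hard once the reduction to "one quadratic variable plus linear parameters" is in place.
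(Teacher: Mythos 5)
Your first route does not work, and the gap is not cosmetic. You propose to factor the chart map of $\s$ through a shear together with the quadratic substitution $u_i\mapsto u_i^2$, handle $\{u_i\ne 0\}$ using the shear's invertibility, and then ``push the $\cQ_M$ estimate across $\{u_i=0\}$ by continuity of the derivatives.'' But the shear and its inverse have derivatives that blow up as $u_i\to 0$, so composing with the shear does \emph{not} preserve $\cQ_M$ bounds with constants uniform on compact sets that touch the exceptional locus --- which is exactly what membership in $\cQ_M$ near the centre requires. Continuity of the $\p^{|\al|}g/\p y^{\al}$ only gives pointwise finiteness of each fixed derivative of $g$; it says nothing about the uniformity in $\al$ of the growth estimate $|\p^{|\al|}g/\p y^{\al}|\le AB^{|\al|}\al!\,M_{|\al|}$. (The lemma is nontrivial precisely because this ``push by continuity'' step is unavailable.) There is also no literal factorization of a blow-up chart as a $\cQ_M$-diffeomorphism composed with a power substitution: the Jacobian determinant of $(x_1,\ldots,x_n)\mapsto(x_1,x_1x_2,\ldots,x_1x_s,x_{s+1},\ldots,x_n)$ is $x_1^{s-1}$, vanishing to order $s-1$, whereas that of $v_1\mapsto v_1^2$ vanishes only to order~$1$; so the claimed ``iff'' reduction to a power substitution is an unproved assertion, not a reduction.

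Your fallback --- mimic the proof of Lemma~\ref{lem:power} directly for the chart map --- is indeed the route the paper takes, but you leave it as a gesture and your sketch of the mechanism is not accurate. From \eqref{eq:transfs}, the transformed derivatives are
$$
\frac{\p}{\p y_1}=\frac{\p}{\p x_1}-\sum_{j=2}^s\frac{x_j}{x_1}\frac{\p}{\p x_j},\qquad
\frac{\p}{\p y_i}=\frac{1}{x_1}\frac{\p}{\p x_i}\ \ (2\le i\le s),\qquad
\frac{\p}{\p y_i}=\frac{\p}{\p x_i}\ \ (i>s),
$$
so the \eqref{eq:FTC}-type integration in $x_1$ is required for \emph{every} $y$-direction $i\le s$, not for a single ``quadratic variable'' with the sheared directions transforming linearly and contributing nothing. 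The $M^{(2)}$ shift comes from the count $p_\be(0)=|\be|+\sum_{i\le s}\be_i\le 2|\be|$, not from a single exponent $k_i=2$. Moreover, the combinatorial weight $\D(\al,\be)$ in Claim~\ref{claim:blup}, defined via a maximization over $I(\be)$ and a minimization over $J(\be)$, has to be tracked separately for the three contributions $\RN{1}$, $\RN{2}_j$, $\RN{3}_j$ in the inductive step, each with its own reindexing of $\ga$ and $\xi$; this is the substance of the argument, not routine bookkeeping, and your proposal does not carry it out. (Minor: the final gluing needs no partition of unity --- quasianalytic classes do not admit them --- since $\cQ_{M^{(2)}}$-membership is a local condition on $W$.)
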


In contrast to the case of a power substitution, we do not actually know whether the loss of regularity 
is necessary in Lemma \ref{lem:blup}; it is interesting to ask whether $g\in \cQ_M(W)$.

\begin{proof}[Proof of Lemma \ref{lem:blup}]
Any point of $W$ has a coordinate neighbourhood $V$, such that $\s^{-1}(V)$ can be covered by
finitely many coordinate charts $U$, in each of which $\s$ is given by a mapping of the form
\begin{equation*}
(y_1,\ldots,y_n) = (x_1,\,x_1x_2,\ldots,x_1x_s,\,x_{s+1},\ldots,x_n),
\end{equation*}
where $2\leq s\leq n$. In the following, we will use $\s: U \to V$ to denote this mapping.

Let $K \subset U$ denote the compact set $\prod_{i=1}^n [-r_i, r_i]$, where each $r_i > 0$. 
Since $f \in \cQ_M(U)$, there are constants $A>0,\, B\geq 1$ such that
\begin{equation}\label{eq:blup1}
\left| \frac{\p^{|\al|} f}{\p x^{\al}} \right| \leq   A B^{|\al|}\al!M_{|\al|}
\end{equation}
on the compact set $K$, for all $\al \in \IN^n$. We will show that 
\begin{equation}\label{eq:blup2}
\left| \frac{\p^{|\be|} g}{\p y^{\be}}\right|  \leq  A (4s^2 rB^2)^{|\beta|} \beta! M_{2 |\beta|}
\end{equation}
on $\s(K)$, for all $\be \in \IN^n$, where $r := \max\{1,r_i\}$. (Recall the conventions following 
\eqref{eq:power2} above.) The lemma then follows.

\begin{claim}\label{claim:blup}
For each $\beta\in \IN^n$, 
\begin{equation}\label{eq:blup}
\left| \frac{\p^{|\al|} (g^{(\be)}\circ\s)}{\p x^{\al}} \right| \leq   A (2s^2r)^{\be_1} B^{p_{\be}(\al)} \D(\al,\be)M_{p_{\be}(\al)},
\end{equation}
for all $\al \in \IN^n$, where $p_{\be}(\al) :=  |\be|+|\al| +\sum_{i=1}^s \be_i$,
\begin{equation}\label{eq:delta}
\D(\al,\be) := \frac{\al_1!(\al+\be+\ga)! }{(\al_1 + \xi)!(\be_1-\xi)!},
\end{equation}
and
\begin{description}
\item[$\ga$] is chosen to maximize $(\al +\be +\de)!)$ over the set $I(\be)$ consisiting of all 
$\de = (0,\de_2,\ldots,\de_s,0,\ldots,0) \in \IN^n$ such that $|\de|=\be_1$,
\item[$\xi$] is chosen to minimize $(\al_1 + \eta)!(\be_1-\eta)!$ over the set $J(\be)$ of 
all $\eta \in \IN$ such that $0\leq \eta\leq \be_1$.
\end{description}
\end{claim}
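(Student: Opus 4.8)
Following the pattern of Claim~\ref{claim:power}, the plan is to prove \eqref{eq:blup} by induction on $|\be|$, with $(2s^2r)^{\be_1}\D(\al,\be)$ playing the role of the explicit factor $\Ga(k,\al,\be)$ there. First I would record how the coordinate derivations pull back through the chart map $\s$. Writing $\phi := g^{(\be)}\circ\s$, the chain rule gives $\frac{\p g^{(\be)}}{\p y_i}\circ\s=\frac{1}{x_1}\frac{\p\phi}{\p x_i}$ for $2\le i\le s$, $\frac{\p g^{(\be)}}{\p y_i}\circ\s=\frac{\p\phi}{\p x_i}$ for $i>s$, and $\frac{\p g^{(\be)}}{\p y_1}\circ\s=\frac{\p\phi}{\p x_1}-\sum_{k=2}^s\frac{x_k}{x_1}\frac{\p\phi}{\p x_k}$. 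Since $\s(\{x_1=0\})\subseteq\{y_1=\cdots=y_s=0\}$, each $\p\phi/\p x_k$ with $2\le k\le s$ vanishes on $\{x_1=0\}$, so by \eqref{eq:FTC}
\[
\frac{1}{x_1}\frac{\p\phi}{\p x_k}(x)\;=\;\int_0^1\frac{\p^2\phi}{\p x_1\p x_k}(tx_1,x_2,\ldots,x_n)\,dt ;
\]
differentiating under the integral pulls a factor $t^{\al_1}$ out of $\p^{|\al|}/\p x^{\al}$, exactly as in the manipulations following \eqref{eq:FTC} in the proof of Lemma~\ref{lem:power}, and $K$ being a box centred at $0$ guarantees $(tx_1,x_2,\ldots,x_n)\in K$ whenever $x\in K$, $t\in[0,1]$.

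The base case $\be=0$ is immediate: $I(0)=J(0)=\{0\}$, so $\ga=\xi=0$, $\D(\al,0)=\al!$, $p_0(\al)=|\al|$, $(2s^2r)^0=1$, and \eqref{eq:blup} reduces to \eqref{eq:blup1}. For the inductive step, assuming \eqref{eq:blup} for $\be$ and all $\al$, I would prove it for $\be+(i)$ in three cases. If $i>s$, then $\p^{|\al|}(g^{(\be+(i))}\circ\s)/\p x^{\al}=\p^{|\al|+1}\phi/\p x^{\al+(i)}$, and the bound for $\be+(i)$ at $\al$ is literally the inductive hypothesis at $\al+(i)$, once one checks $p_{\be+(i)}(\al)=p_\be(\al+(i))$ and $\D(\al,\be+(i))=\D(\al+(i),\be)$ (the sets $I$, $J$ and the optimisers $\ga,\xi$ are untouched when only a coordinate $>s$ changes). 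If $2\le i\le s$, one substitutes the integral formula and differentiates: $\p^{|\al|}(g^{(\be+(i))}\circ\s)/\p x^{\al}=\int_0^1 t^{\al_1}(\p^{|\al|+2}\phi/\p x^{\al+(1)+(i)})(tx_1,\ldots)\,dt$, so its maximum on $\s(K)$ is at most $(\al_1+1)^{-1}$ times $\max_K|\p^{|\al|+2}\phi/\p x^{\al+(1)+(i)}|$; then the inductive hypothesis at $\al+(1)+(i)$, together with $p_\be(\al+(1)+(i))=p_{\be+(i)}(\al)$, gives the bound, the factor $(\al_1+1)^{-1}$ and the passage $\D(\al+(1)+(i),\be)\to\D(\al,\be+(i))$ being balanced by the definition of $\D$ (its $\al_1!$ numerator and the minimiser $\xi$). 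Finally, if $i=1$, one uses the pull-back of $\p/\p y_1$ and the integral formula term by term,
\[
g^{(\be+(1))}\circ\s=\frac{\p\phi}{\p x_1}-\sum_{k=2}^s x_k\int_0^1\frac{\p^2\phi}{\p x_1\p x_k}(tx_1,x_2,\ldots,x_n)\,dt ;
\]
applying $\p^{|\al|}/\p x^{\al}$ and Leibniz's rule to the factors $x_k$ yields at most $1+2(s-1)$ terms, each a constant multiple of some $\p^{|\al'|}\phi/\p x^{\al'}$ with $\al'\in\{\al+(1),\al+(1)+(k)\}$ (hence $p_\be(\al')\le p_{\be+(1)}(\al)$) times either a factor $|x_k|\le r$ or an integral $\le1$; bounding each by the inductive hypothesis and summing the $\le 2s-1$ terms produces the extra multiplicative constant $\lesssim 2s^2r$ per unit increment of $\be_1$---whence the power $(2s^2r)^{\be_1}$---with the $x_k$-prefactors accumulated over those steps accounting for the worst-case exponent $\ga\in I(\be)$ in $(\al+\be+\ga)!$ and the repeated $(\al_1+\cdots)^{-1}$ losses accounting for the minimiser $\xi\in J(\be)$. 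Throughout, $B\ge1$ and the monotonicity of $M$ (which may be assumed after replacing $M_j$ by $c^jM_j$) are used to raise indices $p_\be(\al')$ to $p_{\be+(i)}(\al)$.

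The main obstacle is the combinatorial bookkeeping in the cases $2\le i\le s$ and $i=1$, where the $1/x_1$ occurs: one must verify that $\D$, with its maximum over $I(\be)$ and minimum over $J(\be)$, is precisely the quantity that makes the recursion close---i.e., that the constants generated in passing from $\D(\al+(1),\be)$, $\D(\al+(1)+(k),\be)$ to $\D(\al,\be+(i))$, together with the $(\al_1+\cdots)^{-1}$ factors from the fundamental theorem of calculus and the bound $2s-1$ on the number of terms, are absorbed into $(2s^2r)$. This is the analogue of the elementary identity checked at the end of the proof of Claim~\ref{claim:power}, but genuinely more delicate, because the pull-back of $\p/\p y_1$ mixes $x_1,\ldots,x_s$ and the correction factor is no longer a single product but a max--min over monomials and over the splittings produced by the integrations; the definitions of $\ga$ and $\xi$ are engineered for exactly this. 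Once the claim is established, the lemma follows by setting $\al=0$ and estimating crudely: $\be_1\le|\be|$, $p_\be(0)\le 2|\be|$, and $\D(0,\be)\le 2^{|\be|}\be!$ (a multinomial estimate), so that \eqref{eq:blup} at $\al=0$ yields \eqref{eq:blup2}.
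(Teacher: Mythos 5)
Your overall strategy is the right one and matches the paper's: induction on $|\be|$, pulling back $\p/\p y_i$ via \eqref{eq:transfs}, using \eqref{eq:FTC} once to replace the $1/x_1$ by an integral (hence a factor $(\al_1+1)^{-1}$ after differentiating $|\al|$ times), and showing that $\D$ is calibrated to absorb the combinatorics. However, the proposal as stated has a genuine gap, in two places.

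First, the induction step cannot be run for arbitrary $i$ given arbitrary $\be$: the peeling \emph{order} matters. The estimate you assert for $2\le i\le s$, namely that $\D(\al+(1)+(i),\be)/(\al_1+1)\le \D(\al,\be+(i))$, is false in general when $\be_1>0$. Take $n=s=2$, $\al=(0,0)$, $\be=(1,0)$, $i=2$, so $\tbe=(1,1)$ and $\de=\al+(1)+(2)=(1,1)$. Then $\ga'=(0,1)$, $\xi'\in\{0,1\}$ give $\D(\al,\tbe)=(1,2)!=2$, while $\ga=(0,1)$, $\xi=0$ give $\D(\de,\be)=(2,2)!=4$; so $\D(\de,\be)/(\al_1+1)=4>2=\D(\al,\tbe)$, and the constant $(2s^2r)^{\be_1}$ does not change, so the step breaks. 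The paper sidesteps exactly this by choosing which coordinate to peel off according to $\tbe_1$: if $\tbe_1>0$ it peels off $(1)$ (their Case 2), and it only peels off $(k)$ with $k\ge 2$ when $\tbe_1=0$ (their Case 1), forcing $\be_1=0$, hence $\ga=\xi=0$ and $\D(\al,\be)=(\al+\be)!$, in which case the passage $\D(\de,\be)=(\al_1+1)\D(\al,\tbe)$ is an equality. Your sketch would need to state this ordering explicitly; without it, the stated recursion does not close.

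Second, in the $i=1$ case the assertion that the $x_k$-prefactors and the $(\al_1+\cdots)^{-1}$ losses are "accounted for" by $\ga$ and $\xi$ is exactly the content that needs to be proved, and it is not a one-line identity as in Claim~\ref{claim:power}. The paper carries this out by splitting into the three families of terms $\RN{1}$, $\RN{2}_j$, $\RN{3}_j$ coming from \eqref{eq:transfs} and Leibniz, and for each proves a specific inequality: $\D(\al+(1),\be)\le s\,\D(\al,\tbe)$ (this uses the pigeonhole observation that some $\ga_k\ge\tbe_1/s$ among $k\in\{2,\dots,s\}$, which then lets one trade $(\al_1+1)$ against $(\al_k+\be_k+\ga_k+1)$), $\D(\al+(1),\be)\le\frac{\al_1+1}{\al_j}\D(\al,\tbe)$ for $\RN{2}_j$, and $\D(\al+(1)+(j),\be)\le(\al_1+1)\D(\al,\tbe)$ for $\RN{3}_j$; the $2s-1$ terms and the factors $s$, $r$ then give exactly $2s^2r$. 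These estimates depend on carefully choosing the new optimizers $\ga'=\ga+(k)\in I(\tbe)$ (or $\ga+(j)$) and on $\xi\in J(\tbe)$, and they are the heart of the lemma; leaving them as "engineered to balance" is the missing step.
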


Note that $p_{\be}(0) \leq 2|\be|$, and that
\begin{align*}
\D(0,\be)\, &=\, \be!\cdot \frac{(\be_1)!}{(\xi)!(\be_1-\xi)!} \cdot \frac{\prod_{i=2}^s(\be_i + \ga_i)!}{\be_1! \prod_{i=2}^s \be_i!}\\
               &\leq\, \be!\cdot \frac{(\beta_1)!}{(\xi)!(\beta_1-\xi)!}\cdot\prod_{i=2}^s \frac{(\gamma_i + \beta_i)!}{\gamma_i!\cdot \beta_{i}!}\,
               \leq\, \be!\cdot 2^{\beta_1} \prod_{i=2}^s 2^{\gamma_i + \beta_i}
\end{align*}
(using Remark \ref{rem:DC}(2)). Therefore, \eqref{eq:blup} in the case that $\al=0$ implies \eqref{eq:blup2}; i.e.,
the lemma follows from Claim \ref{claim:blup}.

We will prove the claim by induction on $|\beta|$. Note that $\D(\al,0) = \al!$.
The claim is therefore true when $\beta = 0$, because in this case
\eqref{eq:blup} reduces to \eqref{eq:blup1}. Fix a multiindex $\tbe$, where $|\tbe| > 0$.
By induction, we assume the claim holds for all $\be$ such that $|\be| < |\tbe|$. Now,
the partial derivatives transform by $\s$ as follows (cf. \eqref{eq:trans}):
\begin{equation}\label{eq:transfs}
\begin{aligned}
\frac{\p}{\p y_1} &=  \frac{\p}{\p x_1}  - \sum_{j=2}^s \frac{x_j}{x_1} \frac{\p}{\p x_j}, &&\\
\frac{\p}{\p y_i}  &=  \frac{1}{x_1} \frac{\p}{\partial x_i},  && i=2,\ldots,s,\\
\frac{\p}{\p y_i} &= \frac{\p}{\partial x_i}, && i=s+1, \ldots, n.
\end{aligned}
\end{equation}

\medskip\noindent
\emph{Case 1.} $\tbe_1=0$. Then there exists $\be \in \IN^n$ such that $\tbe = \be + (k)$, where $2\leq k\leq n$.
Since $\be_1 = 0$, \eqref{eq:blup} holds for all $\al \in \IN^n$, with $\ga =0$ and $\xi =0$ in $\D(\al,\be)$.
If $k > s$, then \eqref{eq:blup} for $\tbe$ follows from \eqref{eq:transfs} and the inductive assumption
\eqref{eq:blup} for $\be$.

On the other hand, suppose that $2\leq k \leq s$. Then
\begin{equation}\label{eq:case1}
\frac{\p g^{(\be)}}{\p y_k}\circ\s = \int_0^1 \frac{\p^2 (g^{(\be)}\circ\s)}{\p x_1 \p x_k}\left(t x_1, x_2, \ldots, x_n\right)dt.
\end{equation}
Given any $\al \in \IN^n$, let $\de := \al +(1)+(k)$. Then $p_\be(\de) = p_{\tbe}(\al)$ and 
$$
\D(\de,\be) = (\de + \be)! = (\al_1 +1)(\al + \tbe)!= (\al_1 +1)\D(\al,\tbe).
$$
By \eqref{eq:case1} and the induction hypothesis,
\begin{align*}
\left|\frac{\p^{|\al|}(g^{(\tbe)}\circ\s)}{\p x^{\al}}\right| 
    &= \left|\int_0^1 \frac{\p^{|\al|+2} (g^{(\be)}\circ\s)}{\p x^\al \p x_1 \p x_k}\left(t x_1, x_2, \ldots, x_n\right)t^{\al_1}dt\right|\\
    &\leq A B^{p_{\be}(\de)} \D(\de,\be) M_{p_{\be}(\de)}\cdot \frac{1}{\al_1 +1}\\
    &= A B^{p_{\tbe}(\al)} \D(\al,\tbe) M_{p_{\tbe}(\al)},
\end{align*}
as required.

\medskip\noindent
\emph{Case 2.} $\tbe_1>0$. Then there exists $\be \in \IN^n$ such that $\tbe = \be + (1)$, and
\begin{equation*}
\frac{\p g^{(\be)}}{\p y_1}\circ\s\, =\, \frac{\p (g^{(\be)}\circ\s)}{\p x_1} 
- \sum_{j=2}^s \int_0^1 x_j \frac{\p^2 (g^{(\be)}\circ\s)}{\p x_1 \p x_j}\left(t x_1, x_2, \ldots, x_n\right)dt,
\end{equation*}
by \eqref{eq:transfs}. Therefore, for all $\al \in \IN^n$,
\begin{equation*}
\left|\frac{\p^{|\al|}(g^{(\tbe)}\circ\s)}{\p x^{\al}}\right|\, \leq\, \RN{1} +\sum_{j=2}^s \RN{2}_j +\sum_{j=2}^s \RN{3}_j,
\end{equation*}
where
\begin{equation*}
\begin{aligned}
\RN{1}\, &:=\, \left| \frac{\p^{|\al|+1} (g^{(\be)}\circ\s)}{\p x^{\al} \p x_1}\right|,\\
\RN{2}_j\, &:=\, \int_0^1 \al_j \left|\frac{\p^{|\al|+1} (g^{(\be)}\circ\s)}{\p x^\al \p x_1}\left(t x_1, x_2, \ldots, x_n\right)\right|t^{\al_1}dt,\\
\RN{3}_j\, &:=\, \int_0^1 \left| x_j \cdot\frac{\p^{|\al|+2} (g^{(\be)}\circ\s)}{\p x^\al \p x_1 \p x_j}\left(t x_1, x_2, \ldots, x_n\right)\right|t^{\al_1}dt.
\end{aligned}
\end{equation*}

We will use the inductive hypothesis to show that each term $\RN{1},\,\RN{2}_j$ and $\RN{3}_j$ is bounded by
\begin{equation}\label{eq:bound}
Asr (2s^2r)^{\tbe_1 -1} B^{p_{\tbe}(\al)} \D(\al,\tbe) M_{p_{\tbe}(\al)}.
\end{equation}
The required estimate \eqref{eq:blup} for $\tbe$ follows since there are only $2s-1 \leq 2s$ such terms.

Consider the first term $\RN{1}$. Set $\de := \al + (1)$. Then $p_\be(\de) = p_{\tbe}(\al) - 1$. Choose $\ga \in I(\be),\,
\xi \in J(\be)$ to realize $\D(\de,\be)$ according to the formula \eqref{eq:delta} (with $\de$ in place of $\al$).
If $\tbe_1 = 1$, then $\be_1 =0$, so that $\ga=0,\, \xi=0$; in this case, it is easy to see that
$$
\D(\de,\be) = (\de + \be)! = (\al + \tbe)! \leq \D(\al, \tbe).
$$

On the other hand, suppose that $\tbe_1 > 1$. Since $|\ga| = \tbe_1 - 1$, there exists $k \in \{2,\ldots,s\}$ such
that $\ga_k \geq (\tbe_1 - 1)/(s-1) > 0$. Since $\ga_k \in \IN$, it follows that, in fact, $\ga_k \geq \tbe_1 / s$
(consider separately the cases that $\tbe_1 > s$ or $\tbe_1 \leq s$). Set $\ga' := \ga + (k)$. Then $\ga' \in J(\tbe)$ and
$$
\D(\de,\be)\, =\, \frac{\al_1 + 1}{\al_k+\be_k+\ga_k+1}\cdot \frac{\al_1! (\al +\tbe +\ga')!}{(\al_1+1+\xi)!(\tbe_1-1-\xi)!}
$$
Now,
$$
 \frac{\al_1 + 1}{\al_k+\be_k+\ga_k+1}\, \leq\, \frac{s(\al_1+1)}{\tbe_1}\, \leq\, \frac{s(\al_1+1+\xi)}{\tbe_1-\xi}.
$$
Since $\xi \in J(\tbe)$, we get $\D(\de,\be)\leq s\D(\al,\tbe)$.

In either case, by the induction hypothesis,
\begin{align*}
\RN{1}\, &\leq\, A (2s^2r)^{\be_1} B^{p_{\be}(\de)} \D(\de,\be) M_{p_{\be}(\de)}\\
            &\leq\, A s (2s^2r)^{\tbe_1 -1} B^{p_{\tbe}(\al)} \D(\al,\tbe) M_{p_{\tbe}(\al)}.
\end{align*}

Secondly, consider a term $\RN{2}_j$. We can assume that $\al_j\neq 0$. Again set $\de := \al + (1)$ (so that 
$p_\be(\de) = p_{\tbe}(\al) - 1$), and choose
$\ga \in I(\be),\, \xi \in J(\be)$ to realize $\D(\de,\be)$. Set $\ga' := \ga + (j)$. Then $\ga' \in I(\tbe)$, and
\begin{align*}
\D(\de,\be)\, &=\, \frac{\al_1 + 1}{\al_j+\be_j+\ga_j+1}\cdot \frac{\al_1! (\al +\tbe +\ga')!}{(\al_1+1+\xi)!(\tbe_1-1-\xi)!}\\
                  &\leq\, \frac{\al_1 + 1}{\al_j} \cdot\D(\al,\tbe).
\end{align*}
By the induction hypothesis,
\begin{align*}
\RN{2}_j\, &\leq\, \al_j A (2s^2r)^{\be_1} B^{p_{\be}(\de)} \D(\de,\be) M_{p_{\be}(\de)}\cdot\frac{1}{\al_1+1}\\
            &\leq\, A (2s^2r)^{\tbe_1 -1} B^{p_{\tbe}(\al)} \D(\al,\tbe) M_{p_{\tbe}(\al)}.
\end{align*}

Finally, consider any of the terms $\RN{3}_j$. Set $\de := \al + (1) + (j)$. Then $p_\be(\de) = p_{\tbe}(\al)$.
Choose $\ga \in I(\be),\, \xi \in J(\be)$ to realize $\D(\de,\be)$. Set $\ga' := \ga + (j)$. Then $\ga' \in I(\tbe)$, and
\begin{align*}
\D(\de,\be)\, &=\, \frac{\al_1 + 1}{\al_j+\be_j+\ga_j+1}\cdot \frac{\al_1! (\al +\tbe +\ga')!}{(\al_1+1+\xi)!(\tbe_1-1-\xi)!}\\
                  &\leq\, (\al_1 + 1) \cdot\D(\al,\tbe).
\end{align*}
By the induction hypothesis,
\begin{align*}
\RN{3}_j\, &\leq\, r A (2s^2r)^{\be_1} B^{p_{\be}(\de)} \D(\de,\be) M_{p_{\be}(\de)}\cdot\frac{1}{\al_1+1}\\
            &\leq\, A r(2s^2r)^{\tbe_1 -1} B^{p_{\tbe}(\al)} \D(\al,\tbe) M_{p_{\tbe}(\al)}.
\end{align*}

Since $r,\,s \geq 1$, each of the terms $\RN{1},\,\RN{2}_j$ and $\RN{3}_j$ is bounded by \eqref{eq:bound},
and the proof is compete.
\end{proof}

\section{Quasianalytic continuation}\label{sec:contin}
Let $\cF_a$ denote the ring of formal power series centred at a point $a\in \IR^n$; thus 
$\cF_a \cong \IR\llb x_1,\ldots,x_n\rrb$. If $U$ is open in $\IR^n$ and $f \in \cC^\infty(U)$,
then $\hf_a \in \cF_a$ denotes the formal Taylor expansion of $f$ at a point $a \in U$; i.e.,
$\hf_a(x) = \sum_{\al\in\IN^n}(\p^{|\al|}f/\p x^{\al})(a)x^\al/\al!$ (likewise
for a $\cC^\infty$ mapping $U \to \IR^m$).

Let $\cQ$ denote a quasianalytic class (Definition \ref{def:quasian}).

\begin{theorem}\label{thm:contin}
Let $U,\,V$ denote open neighbourhoods of the origin in $\IR^n$, with coordinate systems
$x= (x_1,\ldots,x_n),\, y=(y_1,\ldots,y_n)$, respectively. (Assume $U$ is chosen so that every
coordinate hyperplane $(x_i = 0)$ is connected). Let $\s:U \to V$ denote a $\cQ$-mapping such
that the Jacobian determinant $\det (\p \s / \p x)$ is a monomial times
an invertible factor in $\cQ(U)$. Let $f \in \cQ(U)$ and let $H \in \cF_0$ be a formal power series
centred at $0\in V$, such that $\hf_0 = H\circ\hs_0$. Then, for all $\be \in \IN^n$, there exists
$f_\be \in \cQ(U)$ such that $f_0 = f$ and
\begin{enumerate}
\item for all $a\in U$, $\hf_a = H_a\circ \hs_a$, where $H_a \in \cF_{\s(a)}$ denotes the formal power series
\begin{equation}\label{eq:contin}
H_a := \sum_{\be \in \IN^n} \frac{f_\be(a)}{\be !}y^\be;
\end{equation}
\item each $f_\be$, $\be \in \IN^n$, and therefore also $H_a \in \cF_{\s(a)}$ (as a function of $a$)
is constant on connected components of the fibres of $\s$.
\end{enumerate}
\end{theorem}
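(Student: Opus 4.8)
The plan is to obtain the functions $f_\be$ by repeatedly applying to $f$ the pull-back of $\p/\p y_i$ under $\s$, which makes sense because the hypothesis on the Jacobian turns this pull-back into a quasianalytic differential operator divided by a monomial. Write $J := \p\s/\p x$, so that $\det J = x^\mu u$ for some $\mu\in\IN^n$ and some nowhere-zero $u\in\cQ(U)$; shrinking $U$, we may assume it connected. Set $D_i := \sum_{k=1}^n(\mathrm{adj}\,J)_{ki}\,\p/\p x_k$, whose coefficients lie in $\cQ(U)$ by Remark \ref{rem:axioms}(1). From $J\cdot\mathrm{adj}\,J = (\det J)\,I$ one gets $D_i(h\circ\s) = x^\mu u\cdot(\p h/\p y_i)\circ\s$ for every $h\in\cC^\infty(V)$, and the same identity formally for any power series $K$ centred at $\s(a)$ composed with $\hs_a$; thus $\tD_i := (x^\mu u)^{-1}D_i$ is the pull-back of $\p/\p y_i$, and dually $\p/\p x_j = \sum_{i=1}^n(\p\s_i/\p x_j)\,\tD_i$. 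Write $H^{(\be)} := \p^{|\be|}H/\p y^\be$, and likewise $g^{(\be)}$ for a $\cC^\infty$ function $g$ of $y$.

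\textbf{Step 1: the family $(f_\be)$.} Starting from $f_0 := f$, I would construct $f_\be\in\cQ(U)$ by induction on $|\be|$ so that $\widehat{f_\be}_0 = H^{(\be)}\circ\hs_0$ and $D_i f_\be = x^\mu u\,f_{\be+(i)}$ in $\cQ(U)$. Given such an $f_\be$, the formal version of the chain-rule identity gives $\widehat{D_i f_\be}_0 = \widehat{\det J}_0\cdot\bigl(H^{(\be+(i))}\circ\hs_0\bigr)$, which is divisible by $x^\mu = x_1^{\mu_1}\cdots x_n^{\mu_n}$; hence the restriction of $D_i f_\be$ to each coordinate hyperplane $(x_j = 0)$ has zero Taylor expansion at $0$, and therefore vanishes on $(x_j = 0)$ by quasianalyticity (Axiom \ref{def:quasian}(3), using that these hyperplanes are connected). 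Applying Axiom \ref{def:quasian}(1) $\mu_j$ times in each variable, $D_i f_\be = x^\mu\widetilde f$ with $\widetilde f\in\cQ(U)$, and we put $f_{\be+(i)} := \widetilde f/u\in\cQ(U)$ (Remark \ref{rem:axioms}(2)); then the two required properties hold for $\be+(i)$. The value $f_{\be+(i)}$ does not depend on the order in which coordinates have been incremented, because the $\tD_i$ commute on the dense open set where $\det J\neq 0$, hence everywhere by continuity of $\cQ$-functions. Finally, applying $\p/\p x_j = \sum_i(\p\s_i/\p x_j)\tD_i$ to $f_\be$ yields, after clearing the denominator and extending from the locus $\det J\neq 0$,
\begin{equation*}
\frac{\p f_\be}{\p x_j} \;=\; \sum_{i=1}^n \frac{\p\s_i}{\p x_j}\,f_{\be+(i)} \quad\text{in } \cQ(U), \qquad \text{for all }\be\in\IN^n\text{ and all }j. \tag{$\ast$}
\end{equation*}
In particular $f_\be(0) = H^{(\be)}(0)$, so $H_0 = H$, where $H_a := \sum_{\be}f_\be(a)\,y^\be/\be!$ is the series \eqref{eq:contin}.

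\textbf{Step 2: assertion (1).} Let $U_0 := \{x\in U : x^\mu\neq 0\}$, an open dense subset of $U$. By Axiom \ref{def:quasian}(2), $\s$ is a local $\cQ$-diffeomorphism on $U_0$; so near each point of $U_0$ we may write $f = g\circ\s$ with $g := f\circ\s^{-1}\in\cQ$, where now $\tD_i$ really is the pull-back of $\p/\p y_i$, and an induction from $D_i f_\be = x^\mu u\,f_{\be+(i)}$ shows that $f_\be = g^{(\be)}\circ\s$ locally on $U_0$. Hence $H_a = \hg_{\s(a)}$ and $H_a\circ\hs_a = \widehat{g\circ\s}_a = \hf_a$ for every $a\in U_0$. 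Now fix a multiindex $\al$: the $\al$-th Taylor coefficient at $a$ of $\hf_a$ is $(\p^{|\al|}f/\p x^\al)(a)/\al!$, while that of $H_a\circ\hs_a = \sum_\ga f_\ga(a)\,(\hs_a(x)-\s(a))^\ga/\ga!$ is a \emph{finite} sum (only $|\ga|\le|\al|$ contribute, since $(\hs_a(x)-\s(a))^\ga$ has order $\ge|\ga|$) of products of the $f_\ga(a)$ with universal polynomials in the partial derivatives of $\s$ at $a$; both are continuous functions of $a$. Since they agree on the dense set $U_0$, they agree on all of $U$, and so $\hf_a = H_a\circ\hs_a$ for every $a\in U$. (Alternatively, one compares the two families of $\cQ$-functions of $a$ directly: both satisfy the same first-order system $\p_{a_j}(\,\cdot\,)_\al = (\al_j+1)(\,\cdot\,)_{\al+(j)}$ — for $H_a\circ\hs_a$ by $(\ast)$ and the chain rule — and agree at $a=0$, so have equal Taylor expansions at $0$, hence coincide by quasianalyticity.)

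\textbf{Step 3: assertion (2).} If $c\colon[0,1]\to U$ is a $\cC^1$ curve contained in a single fibre of $\s$, then by $(\ast)$ and the chain rule
\begin{equation*}
\frac{d}{dt}\,f_\be(c(t)) \;=\; \sum_j \frac{\p f_\be}{\p x_j}(c(t))\,\dot c_j(t) \;=\; \sum_{i=1}^n f_{\be+(i)}(c(t))\,\frac{d}{dt}\bigl(\s_i(c(t))\bigr) \;=\; 0,
\end{equation*}
so each $f_\be$ — and hence each $H_a$ — is constant along $\cC^1$ curves lying in a fibre of $\s$, and therefore constant on connected components of fibres, since fibres of a $\cQ$-mapping are locally path-connected. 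The heart of the argument is Step 1: converting the single formal factorization $\hf_0 = H\circ\hs_0$ into the coherent family $(f_\be)$, that is, dividing $D_i f_\be$ by the monomial $x^\mu$ using Axioms \ref{def:quasian}(1) and \ref{def:quasian}(3) together with the connectedness of the coordinate hyperplanes. Once $(\ast)$ is available, the quasianalytic continuation in Step 2 (the relation holds on the dense locus where $\s$ is invertible and its two sides vary continuously) and the fibre statement in Step 3 follow with little further effort.
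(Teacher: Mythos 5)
Your Step 1 (constructing the coherent family $(f_\be)$ via the adjugate matrix, division by the monomial using the connectedness of the coordinate hyperplanes, and the commutation of the $\tD_i$) is essentially the paper's argument, and Step 2 is a clean way to make explicit the step the paper compresses into a single ``therefore''; the density-plus-continuity argument on the locus $\det J\neq 0$ is correct, and the parenthetical uniqueness-of-Taylor-expansion argument is what the paper presumably has in mind. The identity $(\ast)$ and the formal chain-rule manipulations all check out.

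The gap is in Step 3. You show each $f_\be$ has zero derivative along any $\cC^1$ curve lying in a fibre, and then conclude constancy on connected components of the fibre, citing that ``fibres of a $\cQ$-mapping are locally path-connected.'' This does not close the argument for two reasons. First, path-connectedness (in the topological sense of continuous paths) is the wrong property: you established constancy only along $\cC^1$ curves, so what you actually need is that any two points of a connected component can be joined by a piecewise-$\cC^1$ (in fact piecewise-quasianalytic) arc inside the fibre. Second, and more importantly, that arc-connectedness property of quasianalytic zero sets is not an elementary topological fact; it is a consequence of the quasianalytic curve selection lemma \cite[Thm.\,6.2]{BMselecta}, which is precisely the tool the paper uses in its proof of part (2). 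The paper avoids needing arc-connectedness of the fibre altogether: it reduces to local constancy on fibres via Lemma \ref{lem:contin}, which argues by contradiction — if $f$ were not locally constant on fibres near $a$, then curve selection produces a quasianalytic arc $(\al(t),\be(t),\ga(t))$ into the set $P$ of \eqref{eq:P}, and $\hf_0 = H\circ\hs_0$ forces $f\circ\al = f\circ\be$, a contradiction. So your route is not more elementary: it silently invokes the same deep input in a form (arc-connectedness of quasianalytic sets) that would itself need to be proved or cited, and the justification you do give is the wrong property. To repair Step 3 you could either switch to the paper's contradiction argument via curve selection, or properly cite the arc-connectedness of quasianalytic sets and replace ``$\cC^1$ curve'' by ``piecewise-quasianalytic arc'' throughout.
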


\begin{proof}
(1)\, Write $\s = (\s_1,\ldots,\s_n)$ with respect to the coordinates of $V$. As formal expansions
at $0 \in U$,
$$
\sum_{j=1}^n \left(\frac{\p H}{\p y_j}\circ\s\right)\cdot\frac{\p\s_j}{\p x_i} = \frac{\p f}{\p x_i},
\quad i = 1,\ldots,n,
$$
so that
$$
\det\left(\frac{\p\s}{\p x}\right)\cdot\left(\frac{\p H}{\p y_j}\circ \s\right) 
= \left(\frac{\p\s}{\p x}\right)^{*}\left(\frac{\p f}{\p x_i}\right),
$$
where $(\p f/ \p x_i)$ denotes the column vector with components $\p f/ \p x_i$, and
$(\p\s/\p x)^*$ is the adjugate matrix of $\p\s/ \p x$.

By axioms \ref{def:quasian}(1),\,(3), for each $j=1,\dots,n$, there is a quasianalytic function $f_{(j)} \in \cQ(U)$
such that 
$$
\hf_{(j),0} = \frac{\p H}{\p y_j}\circ \hs_0
$$
and
$$
\det\left(\frac{\p\s}{\p x}\right)\cdot\left(f_{(j)}\right) = \left(\frac{\p\s}{\p x}\right)^*\left(\frac{\p f}{\p x_i}\right)
$$
in $\cQ(U)$. 

It follows by induction on the order of differentiation that, for each $\be \in \IN^n$, there
is a quasianalytic function $f_\be \in \cQ(U)$ such that
$$
\hf_{\be,0} = \frac{\p^{|\be|} H}{\p y^\be}\circ \hs_0
$$
and
$$
\det\left(\frac{\p\s}{\p x}\right)\cdot\left(f_{\be+(j)}\right) 
= \left(\frac{\p\s}{\p x}\right)^*\left(\frac{\p f_\be}{\p x_i}\right). 
$$
Therefore, for all $a \in U$, $\hf_a = H_a\circ \hs_a$, 
where $H_a$ is the formal power
series centred at $\s(a)\in V$ given by \eqref{eq:contin}. Likewise, for all $\be \in \IN^n$ and $a\in U$,
\begin{equation}\label{eq:higherord}
\hf_{\be,a} = \frac{\p^\be H_a}{\p y^\be}\circ \hs_a
\end{equation}

\smallskip
\noindent
(2)\, It is enough to show that, for each $\be \in \IN^n$, $f_\be$ is locally constant on every
fibre of $\s$. This is immediate from Lemma \ref{lem:contin} following applied at any given point 
$a \in U$ to the equation \eqref{eq:higherord}.
\end{proof}

\begin{lemma}\label{lem:contin}
Let $\s:U \to V$ denote a $\cQ$-mapping, where $U,\,V$ are open neighbourhoods of the origin 
in $\IR^n$. Let $f \in \cQ(U)$ and let $H \in \cF_0$ be a formal power series
centred at $0\in V$, such that $\hf_0 = H\circ\hs_0$. Then there is a neighbourhood $W$ of $0$ in $U$
such that $f$ is constant on the fibres of $\s$ in $W$.
\end{lemma}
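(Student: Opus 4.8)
The plan is to recast the conclusion as the vanishing, near the origin, of an explicit quasianalytic function along the fibre product $U\times_V U$, and to establish that vanishing by resolving the singularities of the ideal cutting out the fibre product. Let $\Omega$ be a connected open neighbourhood of $(0,0)$ in $U\times U$, let $g\in\cQ(\Omega)$ be given by $g(x,x'):=f(x)-f(x')$, and let $\cI\subset\cQ(\Omega)$ be the ideal generated by the $n$ functions $\s_i(x)-\s_i(x')$, $i=1,\dots,n$, whose common zero set is $V(\cI)=\{(x,x')\in\Omega:\s(x)=\s(x')\}$. It suffices to find a neighbourhood $W_0\subset\Omega$ of $(0,0)$ with $g\equiv 0$ on $V(\cI)\cap W_0$: choosing $W\ni 0$ open in $U$ with $W\times W\subset W_0$, any $a,a'\in W$ with $\s(a)=\s(a')$ then satisfy $f(a)=f(a')$, which is exactly the assertion of the lemma.

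I would first record the formal fact that $\hg_{(0,0)}$ lies in the ideal $\wcI_{(0,0)}\subset\IR\llb x,x'\rrb$ generated by the Taylor expansions at $(0,0)$ of the generators of $\cI$. This follows from the elementary identity $H(u)-H(w)=\sum_{i=1}^n(u_i-w_i)R_i(u,w)$ in $\IR\llb u,w\rrb$ (telescope over the coordinates so that each summand changes a single variable $u_i$ to $w_i$, then divide that summand by $u_i-w_i$): substituting $u=\hs_0(x)$ and $w=\hs_0(x')$, which is legitimate since $\s(0)=0$, and using $\hf_0=H\circ\hs_0$, one obtains $\hg_{(0,0)}=\hf_0(x)-\hf_0(x')=\sum_i(\hs_{i,0}(x)-\hs_{i,0}(x'))R_i(\hs_0(x),\hs_0(x'))$, and $\hs_{i,0}(x)-\hs_{i,0}(x')$ is precisely the Taylor expansion at $(0,0)$ of the $i$-th generator of $\cI$.

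Next I would apply resolution of singularities of the ideal $\cI$ over $\Omega$ (available in any quasianalytic class, as recalled in Section~\ref{sec:quasian}; here $\cI$ is even finitely generated): this produces a proper surjective $\cQ$-mapping $\pi\colon Z\to\Omega$, a composite of blowings-up with $\cQ$-submanifold centres, such that $\pi^*\cI$ is, in suitable local $\cQ$-coordinates about each point of $Z$, generated by a monomial. Set-theoretically $V(\pi^*\cI)=\pi^{-1}(V(\cI))$, so $\pi^{-1}(0,0)\subset V(\pi^*\cI)$. Fix $P\in\pi^{-1}(0,0)$ and local coordinates $x=(x_1,\dots,x_n)$ centred at $P$ in which $\pi^*\cI=(x^c)$ with $x^c=x_1^{c_1}\cdots x_n^{c_n}$; since $P\in V(\pi^*\cI)$, some $c_i>0$. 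By the chain rule for Taylor expansions together with the formal fact above, $\widehat{(g\circ\pi)}_P=\hg_{(0,0)}\circ\widehat{\pi}_P$ lies in the ideal of $\IR\llb x\rrb$ generated by the Taylor expansions at $P$ of the functions $\pi^*(\s_i(x)-\s_i(x'))$, that is, in $(x^c)$. Hence $\widehat{(g\circ\pi)}_P$ is divisible by $x_i$ for each $i$ with $c_i>0$; equivalently, the restriction of $g\circ\pi$ to the coordinate hyperplane $(x_i=0)$---a quasianalytic function near $P$---has zero Taylor expansion at $P$, hence vanishes identically near $P$ by the quasianalyticity axiom \ref{def:quasian}(3). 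Therefore $g\circ\pi$ vanishes near $P$ on $V(\pi^*\cI)=\bigcup_{c_i>0}(x_i=0)$.

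Finally, by the previous step $g\circ\pi$ vanishes on $V(\pi^*\cI)$ in a neighbourhood of each point of the compact fibre $\pi^{-1}(0,0)$; since $\pi$ is proper, it follows that $g\circ\pi$ vanishes on $V(\pi^*\cI)\cap\pi^{-1}(W_0)=\pi^{-1}(V(\cI)\cap W_0)$ for some neighbourhood $W_0$ of $(0,0)$, and since $\pi$ is surjective this gives $g\equiv 0$ on $V(\cI)\cap W_0$, completing the argument. I expect the main obstacle to be the local step at $P$: one must pass from the purely formal divisibility of $\widehat{(g\circ\pi)}_P$ to the genuine vanishing of $g\circ\pi$ along the coordinate hyperplanes, and it seems essential to do this via restriction to a hyperplane and the quasianalyticity axiom rather than by asserting $g\circ\pi\in\pi^*\cI$ locally, since closedness of ideals (equivalently, faithful flatness of the completion map) is not available in a general quasianalytic class. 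A secondary point is to perform the resolution, and extract the conclusion, over an arbitrarily small connected neighbourhood of $(0,0)$ only, so that no global connectedness hypothesis on $U$ is needed.
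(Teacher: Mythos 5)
Your proof is correct, but it takes a genuinely different route from the paper's. The paper (attributing the argument to Nowak) proceeds by contradiction via the quasianalytic curve selection lemma: it introduces the set $P$ of triples $(\xi,\eta,\zeta)$ with $\s(\xi)=\zeta=\s(\eta)$ but $f(\xi)\neq f(\eta)$, notes that if the conclusion failed then $(0,0,0)\in\overline P$, selects a quasianalytic arc $(\al(t),\be(t),\ga(t))$ into $P$, and computes $(f\circ\al)^\wedge_0 = H\circ\hga_0 = (f\circ\be)^\wedge_0$; one-variable quasianalyticity then forces $f\circ\al=f\circ\be$, a contradiction. You instead work with the difference $g(x,x')=f(x)-f(x')$ on the fibre product, show formally that $\hg_{(0,0)}$ lies in the ideal generated by the $\hs_{i,0}(x)-\hs_{i,0}(x')$ via the telescoping identity for $H(u)-H(w)$, principalize $\cI=(\s_i(x)-\s_i(x'))$ by a composite of admissible blowings-up $\pi$, and deduce that $\widehat{(g\circ\pi)}_P$ is divisible by the local monomial generator at each $P\in\pi^{-1}(0,0)$; quasianalyticity applied to the restrictions of $g\circ\pi$ to coordinate hyperplanes then yields the vanishing of $g$ on $V(\cI)$ near $(0,0)$, via properness and surjectivity of $\pi$. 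Both routes ultimately rest on resolution of singularities (the curve selection lemma is itself derived from it in \cite{BMselecta}), but the paper's argument is shorter once curve selection is granted, and its ``bad set'' device is reused verbatim in the proof of Corollary \ref{cor:contin}(2); your resolution-based argument has the compensating virtue of making the use of the formal hypothesis $\hf_0=H\circ\hs_0$ completely explicit. Three small items to tidy up: handle the degenerate case $\cI=(0)$ separately (then $\s$ is constant near $0$, so $\hf_0=H(0)$ is constant and $f$ is constant by quasianalyticity, whence $g\equiv 0$); the local coordinates at $P\in Z$ should be $2n$-tuples since $\dim Z = 2n$; and $\s(0)=0$ is not an explicit hypothesis of the lemma but is implicit in the requirement that the composite $H\circ\hs_0$ be defined, which is worth stating.
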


\begin{proof}
The following argument is due to Nowak \cite{Ndiv}. We can assume that $f(0) =0$, $H(0) = 0$.
Let
\begin{equation}\label{eq:P}
P:= \{(\xi, \eta, \zeta) \in U \times U \times V: \s(\xi) = \zeta = \s(\eta),\, f(\xi) \neq f(\eta)\}.
\end{equation}
Suppose the lemma is false. Then $(0,0,0) \in \overline{P}$. By the quasianalytic curve selection
lemma (see \cite[Thm.\,6.2]{BMselecta}), there is a quasianalytic arc $(\al(t), \be(t), \ga(t)) \in U\times U\times V$
such that $(\al(0), \be(0), \ga(0)) = (0,0,0)$ and $(\al(t), \be(t), \ga(t)) \in P$ if $t\neq 0$. 
Then
$$
(f\circ\al)^{\wedge}_0 = \hf_0\circ\hal_0 = H\circ\hs_0\circ \hal_0 
= H\circ(\s\circ\al)^{\wedge}_0 = H\circ\hga_0.
$$
Likewise, $(f\circ\be)^{\wedge}_0 = H\circ\hga_0$, so that $(f\circ\al)^{\wedge}_0 
= (f\circ\be)^{\wedge}_0$. Since
$f\circ\al,\, f\circ\be$ are quasianalytic functions of $t$, $f\circ\al = f\circ\be$; a contradiction.
\end{proof}

\begin{remark}\label{rem:fibre} 
Theorem \ref{thm:contin}(2) also follows from Proposition \ref{prop:fibre} below, which is included 
here for completeness.
Proposition \ref{prop:fibre}
in the special case that $\cQ = \cO$ (the class of analytic functions) is proved in \cite[Prop.\,11.1]{BMrel2},
but the proof in the latter does not apply to quasianalytic classes, in general. We have chosen to prove
Theorem \ref{thm:contin}(2) using Lemma \ref{lem:contin} because the idea of its proof above will be 
needed again in our proof of Corollary \ref{cor:contin}.
\end{remark}

\begin{lemma}\label{lem:glue}
Let $\s: M \to V$ denote a proper $\cQ$-mapping, where $M$ is a $\cQ$-manifold of dimension
$n$, and $V$ is an open neighbourhood of the origin in $\IR^n$. Then, given any open covering $\{U\}$
of $\s^{-1}(0)$, there is a neighbourhood $W$ of $0$ in $V$ with the following properties:
\begin{enumerate}\item $\s^{-1}(W) \subset \bigcup U$.
\item Let $H \in \cF_0$ be a power series centred at $0 \in V$, and suppose there exists 
$f_U \in \cQ(U)$, for each $U$, such that $\hf_{U,a} = \hs_{a}^*(H)$, for all $a \in \s^{-1}(0)\cap U$. 
Then there exists $f \in \cQ(\s^{-1}(W))$ such that
$\hf_a = \hs^*_a(H)$, for all $a \in \s^{-1}(0)$.
\end{enumerate}
\end{lemma}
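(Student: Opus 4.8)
The plan is to pass to a finite subcover of the compact set $\s^{-1}(0)$, replace it by a shrinking whose pieces have closure inside the original sets, glue the given local functions over the shrunken cover, and then use properness of $\s$ to cut $V$ down to a neighbourhood $W$ of $0$ over which the glued function is well defined. The topological input I would record first is that a proper map into a locally compact Hausdorff space is closed, so for any closed $C\subset M$ with $C\cap\s^{-1}(0)=\emptyset$ the set $V\setminus\s(C)$ is an open neighbourhood of $0$ in $V$; applied with $C=M\setminus\bigcup U$ this already produces a $W$ satisfying (1), so the substance of the lemma is (2). For that, fix $U_1,\dots,U_N$ from $\{U\}$ covering $\s^{-1}(0)$ and, by normality of $M$, a shrinking: open sets $U_i'$ with $\overline{U_i'}\subset U_i$ and $\s^{-1}(0)\subset\bigcup_i U_i'$. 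Write $f_i:=f_{U_i}\in\cQ(U_i)$; the aim is to glue the restrictions $f_i|_{U_i'}$.

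The heart of the argument is that $f_i$ and $f_j$ agree near $\s^{-1}(0)$ on their common domain. Put $g_{ij}:=f_i-f_j\in\cQ(U_i\cap U_j)$; by hypothesis its Taylor expansion vanishes at each point of $\s^{-1}(0)\cap U_i\cap U_j$. Let $N_{ij}\subset U_i\cap U_j$ be the set of points where the Taylor expansion of $g_{ij}$ vanishes. It is closed (the common zero set of $g_{ij}$ and all its partial derivatives) and, by axiom \ref{def:quasian}(3), also open, with $g_{ij}$ identically zero on it; and it contains $\s^{-1}(0)\cap U_i\cap U_j$. Now set $A_{ij}:=\overline{(U_i'\cap U_j')\setminus N_{ij}}$ (closure in $M$). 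This is a closed set disjoint from $\s^{-1}(0)$: if a point $a\in\s^{-1}(0)$ lay in $A_{ij}$, then $a\in\overline{U_i'}\cap\overline{U_j'}\subset U_i\cap U_j$, hence $a\in N_{ij}$, which is open in $M$, so $a$ would have a neighbourhood disjoint from $(U_i'\cap U_j')\setminus N_{ij}$ and a fortiori from its closure $A_{ij}$, a contradiction. I expect this verification — that the ``disagreement set'' $A_{ij}$ stays away from $\s^{-1}(0)$, which is exactly where the shrinking $\overline{U_i'}\subset U_i$ and the quasianalyticity axiom both enter — to be the only delicate point; everything else is bookkeeping.

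To conclude, set $C:=\bigl(M\setminus\bigcup_i U_i'\bigr)\cup\bigcup_{i,j}A_{ij}$, a closed subset of $M$ disjoint from $\s^{-1}(0)$, and let $W:=V\setminus\s(C)$, an open neighbourhood of $0$ by the first step. Then $\s^{-1}(W)\cap C=\emptyset$; in particular $\s^{-1}(W)\subset\bigcup_i U_i'\subset\bigcup U$, which gives (1), and for any $i,j$ and any $a\in\s^{-1}(W)\cap U_i'\cap U_j'$ we have $a\notin A_{ij}$, hence $a\in N_{ij}$, so $f_i$ and $f_j$ agree near $a$; thus $f_i=f_j$ on $\s^{-1}(W)\cap U_i'\cap U_j'$. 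By the sheaf property of $\cQ$, the sections $f_i|_{\s^{-1}(W)\cap U_i'}$ glue to a single $f\in\cQ(\s^{-1}(W))$. Finally, given $a\in\s^{-1}(0)$, choose $i$ with $a\in U_i'$; since $f=f_i$ near $a$ and $a\in\s^{-1}(0)\cap U_i$, the Taylor expansion $\hf_a$ equals the Taylor expansion of $f_i$ at $a$, which is $\hs_a^*(H)$ by hypothesis. This proves both assertions.
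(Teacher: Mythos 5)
Your proof is correct and follows the same overall strategy as the paper's: cover $\s^{-1}(0)$ by finitely many pieces, shrink the cover, identify a closed ``disagreement set'' which the quasianalyticity axiom forces to stay away from $\s^{-1}(0)$, and use properness of $\s$ to choose $W$ whose preimage avoids it. The one genuine difference is in how the disagreement locus is controlled. The paper takes each $\Om_i$ sub-quasianalytic so that each pairwise intersection $\Om_i\cap\Om_j$ has only \emph{finitely many} connected components, shows $f_i=f_j$ on every component whose closure meets $\s^{-1}(0)$, and deletes the closures of the finitely many remaining components. You instead observe directly that the Taylor-vanishing locus $N_{ij}$ of $f_i-f_j$ is \emph{clopen} in $U_i\cap U_j$ (closed trivially, open by axiom \ref{def:quasian}(3)), and take the single closed set $A_{ij}=\overline{(U_i'\cap U_j')\setminus N_{ij}}$. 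This sidesteps the finitely-many-components device entirely, which is arguably a little cleaner; the openness of $N_{ij}$ is doing exactly the work that connectedness of components does in the paper. Your verification that $A_{ij}\cap\s^{-1}(0)=\emptyset$ is the key step and is correct: it uses both the shrinking $\overline{U_i'}\subset U_i$ (to get the limit point $a$ into the domain $U_i\cap U_j$ of $g_{ij}$) and the openness of $N_{ij}$ (to separate $a$ from the complement). You also spell out the role of properness (a proper map is closed, so $V\setminus\s(C)$ is open) more explicitly than the paper, which simply asserts the existence of $W$ with $\s^{-1}(W)\subset\bigcup V_i$.
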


\begin{proof}
There is a covering of the fibre $\s^{-1}(0)$ by finitely many open sets $\Om_i$ with compact
closure, such that, for each $i$, there exists $U$ such that $\overline{\Om_i} \subset U$;
write $f_U = f_i$ (of course, $U$ is not necessarily unique). We can assume that each 
$\Om_i \bigcap \Om_j$ has only finitely many connected components (e.g., take each $\Om_i$
sub-quasianalytic).

For each $i$ and $j$, if $\Om$ is a connected component of $\Om_i \bigcap \Om_j$ and its closure
$\overline{\Om}$ includes a point of $\s^{-1}(0)$, then $f_i = f_j$ in $\Om$, by quasianalyticity.
For each $i$, let $V_i$ denote the complement in $\Om_i$ of the union of the $\overline{\Om}$,
for all connected components $\Om$ of $\Om_i \bigcap \Om_j$, for every $j$, such that 
$\overline{\Om} \bigcap  \s^{-1}(0) = \emptyset$. Then $\{V_i\}$ is an open covering
of $\s^{-1}(0)$. Let $W$ be any neighbourhood $0$ in $V$ such that $\s^{-1}(W) \subset \bigcup V_i$.
Then $f_i = f_j$ in $V_i \cap V_j \cap \s^{-1}(W)$, for all $i,j$, so the result follows.
\end{proof}

\begin{corollary}\label{cor:contin}
Let $\s: M \to V$ denote a proper $\cQ$-mapping, where $M$ is a $\cQ$-manifold of dimension
$n$, and $V$ is an open neighbourhood of the origin in $\IR^n$. Let $H \in \cF_0$ be a power
series centred at $0 \in V$. Suppose that, for all $a \in \s^{-1}(0)$, there is a neighbourhood $U$
of $a$ with coordinates $(x_1,\ldots,x_n)$ such that $\s|_U$ satisfies the hypotheses of Theorem \ref{thm:contin}, and there exists $f_U \in \cQ(U)$ such that $\hf_{U,a} = \hs_a^*(H)$. Then:
\begin{enumerate}
\item There is a neighbourhood $W$ of $0$ in $V$ such that $\s^{-1}(W) \subset \bigcup U$, and a function
$f_\be \in \cQ(\s^{-1}(W))$, for every $\be \in \IN^n$, with the following properties: each point of $\s^{-1}(0)$ has
a neighbourhood $\Om$ in $U \bigcap \s^{-1}(W)$, for some $U$, such that $f_{\be} = f_{U,\be}$ in $\Om$,
for all $\be$ (where $f_{U,\be}$ denotes the function associated to $f_U$ given by Theorem \ref{thm:contin}).
\item (After perhaps shrinking $W$) $f = f_0$ is \emph{formally composite} with $\s$; i.e.,
for all $b \in W$, there exists $H_b \in \cF_b$ such that $\hf_a = \hs^*_a(H_b)$, for all $a \in \s^{-1}(b)$.
\item In fact, there is a $\cC^\infty$ function $h \in \cC^\infty(W)$ such that $f = h\circ\s$.
\end{enumerate}
\end{corollary}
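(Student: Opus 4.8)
The plan is to prove the three parts in order, using Theorem \ref{thm:contin} locally and Lemma \ref{lem:glue} to globalize. For part (1), apply Theorem \ref{thm:contin} in each chart $U$ (which we may assume satisfies its hypotheses) to obtain, for every $\be \in \IN^n$, a function $f_{U,\be} \in \cQ(U)$ with $\hf_{U,\be,a} = (\p^{|\be|}H/\p y^\be)\circ \hs_a$ for all $a$ in the relevant set — in particular at every $a \in \s^{-1}(0)\cap U$, where the germ of $H$ we are using is the one centred at $0$. For a fixed $\be$, the collection $\{f_{U,\be}\}$ and the power series $\p^{|\be|}H/\p y^\be \in \cF_0$ then satisfy the hypotheses of Lemma \ref{lem:glue}(2): $\hf_{U,\be,a} = \hs_a^*(\p^{|\be|}H/\p y^\be)$ for all $a\in\s^{-1}(0)\cap U$. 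Lemma \ref{lem:glue} produces a neighbourhood $W_\be$ of $0$ and a global function $f_\be \in \cQ(\s^{-1}(W_\be))$ agreeing with the $f_{U,\be}$ near $\s^{-1}(0)$. The one wrinkle is that Lemma \ref{lem:glue} is stated for a single power series, so a priori $W_\be$ and the covering $\{V_i\}$ it produces depend on $\be$; I would note that the gluing data (the sets $\Om_i$, their pairwise intersections, and which connected components meet $\s^{-1}(0)$) depend only on $\s$ and the chosen charts, not on $\be$ — only the functions $f_i$ being glued change — so the same $W$ works for all $\be$ simultaneously, giving a single neighbourhood $W$ and the family $\{f_\be\}_{\be\in\IN^n}$ with the stated local agreement property. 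This yields (1).

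For part (2), I must show $f=f_0$ is formally composite with $\s$ over a full neighbourhood $W$ of $0$, i.e.\ that for each $b\in W$ the germs $\hf_a$ at the points $a\in\s^{-1}(b)$ all factor through $\hs_a$ via one common series $H_b\in\cF_b$. The natural candidate is $H_b := \sum_{\be}(f_\be(\tilde a)/\be!)(y-b)^\be$ for any $\tilde a\in\s^{-1}(b)$; by Theorem \ref{thm:contin}(1) this is exactly the series $H_{\tilde a}$ for which $\hf_{\tilde a} = H_{\tilde a}\circ\hs_{\tilde a}$ in the chart containing $\tilde a$. Two things need checking: first, that $H_b$ is independent of the choice of $\tilde a\in\s^{-1}(b)$, and second, that $\hf_a = \hs_a^*(H_b)$ for \emph{every} $a\in\s^{-1}(b)$, not just the chosen one. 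Both follow from Theorem \ref{thm:contin}(2): each $f_\be$ is constant on connected components of the fibres of $\s$, so if the fibre $\s^{-1}(b)$ is connected all $f_\be(\tilde a)$ are the same and we are done. In general the fibre may be disconnected; here I would use that $\s$ is proper, so $\s^{-1}(0)$ is compact, and for $b$ near $0$ every connected component of $\s^{-1}(b)$ lies in $\s^{-1}(W)$ and limits onto $\s^{-1}(0)$. The key point — and this is the part that needs the curve-selection idea referenced in Remark \ref{rem:fibre}, reprising the proof of Lemma \ref{lem:contin} — is that for $b$ sufficiently small, points in different components of $\s^{-1}(b)$ still give the same values $f_\be(a)$, because otherwise one could select a quasianalytic arc in $\{(\xi,\eta,\zeta): \s\xi=\zeta=\s\eta,\ f_\be(\xi)\neq f_\be(\eta)\}$ accumulating at a point of $\s^{-1}(0)\times\s^{-1}(0)\times\{0\}$, contradicting that $f_\be$ is formally composite with $\s$ along $\s^{-1}(0)$ (which forces equality of the relevant Taylor expansions and hence, by quasianalyticity of the pulled-back functions of $t$, equality along the arc). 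After shrinking $W$ so this holds, $H_b$ is well-defined and $\hf_a=\hs_a^*(H_b)$ for all $a\in\s^{-1}(b)$, all $b\in W$. I expect this globalization of the single-fibre statement of Lemma \ref{lem:contin} to a whole neighbourhood — controlling disconnected fibres uniformly — to be the main obstacle.

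For part (3), once $f$ is formally composite with $\s$ over $W$, I would define $h$ on $W$ by $h(b) := H_b(b) = $ the constant term of $H_b$, equivalently $h(b)=f_\be(a)$ evaluated with $\be=0$ at any $a\in\s^{-1}(b)$ — but more usefully, $\p^{|\be|}h/\p y^\be(b)$ should be $\be!$ times the coefficient of $(y-b)^\be$ in $H_b$, i.e.\ $f_\be(a)$ for any $a\in\s^{-1}(b)$. Concretely: work locally, pick a chart $U$ and a point $a\in\s^{-1}(b)\cap U$; since $\s|_U$ has Jacobian a monomial times a unit, $\s$ is a submersion on a dense open set, and on that set $h = f\circ(\text{local section of }\s)$ is $\cC^\infty$, with all its derivatives given by the $f_\be$. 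The functions $b\mapsto f_\be(a(b))$ (for a local $\cQ$-section $a(\cdot)$ of $\s$ where one exists) are $\cC^\infty$, are independent of the section and chart by part (2), and are mutually compatible as a jet (the $(i)$-th derivative of $f_\be$ matches $f_{\be+(i)}$, again by Theorem \ref{thm:contin}(1)); by Whitney's extension theorem — or simply because these local representations patch to a single $\cC^\infty$ function since they agree on overlaps — they assemble to $h\in\cC^\infty(W)$. Finally $h\circ\s$ and $f$ have the same Taylor expansion at every point of $\s^{-1}(0)$, hence agree near $\s^{-1}(0)$ by quasianalyticity of $f$; shrinking $W$ once more so that $\s^{-1}(W)$ retracts onto a neighbourhood of $\s^{-1}(0)$ gives $f=h\circ\s$ on $\s^{-1}(W)$, completing the proof.
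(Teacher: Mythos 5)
Parts (1) and (2) of your proposal take essentially the paper's route: (1) is Lemma \ref{lem:glue} applied chart by chart, and your observation that the neighbourhood $W$ produced by the gluing lemma depends only on the covering, not on the function being glued, is a correct way to see that one $W$ serves all $\be$. However, there are two genuine gaps.

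First, in part (2), you correctly reduce to showing that, for each fixed $\be$, $f_\be$ is constant on the fibres of $\s$ over some neighbourhood $W_\be$ of $0$, and you handle this by curve selection exactly as in Lemma \ref{lem:contin}. But you then say ``after shrinking $W$ so this holds'' as if a single shrink suffices — you flagged ``controlling disconnected fibres uniformly'' as the main obstacle but did not actually resolve it: a priori $W_\be$ depends on $\be$, and $\bigcap_\be W_\be$ could collapse to $\{0\}$. The paper closes this gap with topological noetherianity (\cite[Thm.\,6.1]{BMselecta}): setting $P_k := \{(\xi,\eta,\zeta): \s(\xi)=\zeta=\s(\eta),\ f_\be(\xi)=f_\be(\eta)\ \text{for}\ |\be|\le k\}$, the decreasing chain of closed quasianalytic sets $P_0\supset P_1\supset\cdots$ stabilizes near $\s^{-1}(0)\times\s^{-1}(0)\times\{0\}$, say at $k_0$, so it suffices to shrink $W$ for the finitely many $\be$ with $|\be|\le k_0$. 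Without this (or an equivalent uniformity argument), (2) is not established.

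Second, in part (3) your passage from ``the jets $H_b$ are compatible and give a $\cC^\infty$ function on the dense open set where $\s$ is submersive'' to ``these assemble to $h\in\cC^\infty(W)$'' is not a proof. Smoothness on a dense open set together with compatibility of the formal jets does not by itself yield smoothness on $W$; verifying Whitney's conditions for the jets $\{H_b\}$ requires genuine estimates, and this is precisely the content of Glaeser's composite function theorem. The paper does not attempt to reprove this: it notes that $\s$ is generically a submersion and invokes the quasianalytic generalization of Glaeser's theorem \cite{Bib}, \cite{Ncomp}, \cite{G}, with formal compositeness (part (2)) as the hypothesis. Your sketch would need to either supply those estimates or, as the paper does, cite the composite function theorem directly.
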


\begin{proof} (1)\,This gluing condition is immediate from Lemma \ref{lem:glue}.

\smallskip\noindent
(2)\, It is enough to show that, after shrinking $W$ if necessary, each $f_\be$ is constant on the
fibres of $\s$ over $W$. For every $k \in \IN$, let 
$$
P_k := \{(\xi,\eta,\zeta) \in M\times M\times W: \s(\xi) = \zeta = \s(\eta),\, f_\be(\xi) = f_\be(\eta),\, |\be|\leq k\}.
$$
Then the decreasing sequence of closed quasianalytic sets $P_0 \supset P_1 \supset P_2 \supset \cdots$
stabilizes in some neighbourhood of the compact set $\s^{-1}(0) \times \s^{-1}(0) \times \{0\}$, by
topological noetherianity \cite[Thm.\,6.1]{BMselecta}; say, $P_k = P_{k_0}$, $k\geq k_0$, in such a
neighbourhood. It follows that, if $W$ is a sufficiently small neighbourhood of $0$ in $V$, and $f_\be$ is
constant on the fibres of $\s$ over $W$, for all $\be \leq k_0$, then $f_\be$ is
constant on the fibres of $\s$ over $W$, for all $\be$.

Therefore, it is enough to prove the following assertion: given $\be \in \IN^n$, there is an open neighbourhood 
$W$ of $0$ such that $f_\be$ is constant on the fibres of $\s$ over $W$.
We can now argue as in the proof of Lemma \ref{lem:contin}.
Define $P \subset M \times M \times W$ as in \eqref{eq:P}. Suppose the assertion is false. Then
there is a point $(a_1,a_2.0) \in \overline{P}$, and a quasianalytic arc 
$(\al(t), \be(t), \ga(t)) \in M\times M\times V$
such that $(\al(0), \be(0), \ga(0)) = (a_1,a_2,0)$ and $(\al(t), \be(t), \ga(t)) \in P$ if $t\neq 0$. We
get a contradiction as before.

\smallskip\noindent
(3)\, The hypotheses on $\s$ imply that $\s$ is generically a submersion, so the assertion follows
from (1) by a quasianalytic generalization \cite{Bib}, \cite{Ncomp} of Glaeser's composite function theorem \cite{G}
(cf. Corollary \ref{cor:quasiancontin} ff. below).
\end{proof}

\begin{proposition}\label{prop:fibre} Let $\vp: M \to \IR^n$ denote a $\cQ$-mapping, where $M$ is a $\cQ$-manifold
of dimension $m$. Let $f \in \cQ(M)$ and let $H$ denote a formal power series at $b = 0 \in \IR^n$. Then
$$
S := \{a \in \vp^{-1}(b): \hf_a = H\circ\hvp_a\}
$$ 
is open and closed in $\vp^{-1}(b)$.
\end{proposition}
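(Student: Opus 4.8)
The plan is to establish openness and closedness of $S$ separately, each by a local argument near a point $a \in \varphi^{-1}(b)$; since $S \subset \varphi^{-1}(b)$, it suffices to find, for any $a \in \varphi^{-1}(b)$, a neighbourhood $\Om$ of $a$ in $\varphi^{-1}(b)$ on which membership in $S$ is constant (once we know the defining condition propagates, both openness of $S$ and openness of its complement follow). First I would reduce to a convenient local form for $\varphi$ near $a$: by resolution of singularities in the quasianalytic category (applied to the ideal generated by a defining system for the critical locus, as quoted after Definition~\ref{def:quasian}), or more directly by choosing coordinates and composing with a suitable sequence of blowings-up, we may assume that in a neighbourhood of $a$ the Jacobian $\p\varphi/\p x$ has the monomial-times-unit determinant form required in Theorem~\ref{thm:contin} — after shrinking and passing to charts. (Strictly, $M$ has dimension $m$ which may differ from $n$; near a point where $\varphi$ is a submersion this is handled by a local splitting of coordinates, and near a critical point one blows up to reduce to the submersion-with-monomial-Jacobian situation; alternatively one may work on the $n$-dimensional fibre product / strict transform where Theorem~\ref{thm:contin} applies verbatim.)

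Granting that local normal form, the core is Theorem~\ref{thm:contin}: if $\hf_{a} = H \circ \hs_{a}$ holds at the one point $a$, then the same formal composition identity $\hf_{a'} = H_{a'} \circ \hs_{a'}$ holds at every point $a'$ of a whole neighbourhood $U$ of $a$, where $H_{a'}$ is the power series $\sum_\be f_\be(a') y^\be/\be!$ built from the canonically-defined quasianalytic functions $f_\be$; moreover, by part~(2) of that theorem, each $f_\be$ is constant on connected components of the fibres of $\varphi$. The key observation is then: for $a' \in U \cap \varphi^{-1}(b)$, we have $\varphi(a') = \varphi(a) = b$, so $\hs_{a'}$ and $\hs_a$ are power series centred at the same point $b$; and if $a'$ lies in the connected component of $a$ in the fibre $\varphi^{-1}(b)$, then $f_\be(a') = f_\be(a) = (\p^{|\be|}H/\p y^\be)(0)$ for all $\be$ — wait, more carefully: $f_\be(a)$ equals the Taylor coefficient data of $H$ precisely because $\hf_{\be,a} = (\p^{|\be|}H/\p y^\be)\circ\hs_a$ was the defining property and $\hf_{\be,a}(0) = f_\be(a)$. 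Hence $H_{a'} = H_a = H$ on that component, so $\hf_{a'} = H \circ \hs_{a'}$, i.e. $a' \in S$. Running the same argument starting from a point $a \notin S$ (where the hypothesis of Theorem~\ref{thm:contin} fails) requires a little more care, which is the next point.

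The genuine subtlety — and the step I expect to be the main obstacle — is the \emph{closedness} direction, equivalently openness of $\varphi^{-1}(b) \setminus S$: here we cannot invoke Theorem~\ref{thm:contin} directly because its hypothesis $\hf_a = H \circ \hs_a$ is exactly what fails. The way around this is the curve-selection argument in the style of Lemma~\ref{lem:contin}: suppose $a \notin S$ but $a \in \overline{S \cap \varphi^{-1}(b)}$; using the quasianalytic curve selection lemma (\cite[Thm.\,6.2]{BMselecta}) choose a quasianalytic arc $\al(t)$ in $M$ with $\al(0) = a$ and $\al(t) \in S$ for $t \ne 0$. Then $(f\circ\al)^{\wedge}_0 = \hf_{\al(t)}|_{\text{formally along the arc}}$, and since $\al(t) \in S$ we get $(f\circ\al)^\wedge_0 = H \circ (\varphi\circ\al)^\wedge_0 = H \circ \hvp_a \circ \hal_0$ (using that $\varphi\circ\al$ has value $b$ at $t=0$ and composing formal series along the arc, exactly as in Lemma~\ref{lem:contin}); on the other hand $(f\circ\al)^\wedge_0$ is obtained by composing $\hf_a$ with $\hal_0$. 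Comparing, one finds that $\hf_a \circ \hal_0 = (H\circ\hvp_a)\circ\hal_0$; this does not immediately give $\hf_a = H\circ\hvp_a$ because $\hal_0$ need not be "generic" enough to be cancellable. To fix this, I would instead apply curve selection on the set $P := \{(\xi,\eta) \in M \times \varphi^{-1}(b) : \varphi(\xi) = \varphi(\eta) = b,\ \hf_\xi \ne H\circ\hvp_\xi \text{ detected at some finite jet}\}$ — more precisely, stratify by the finite-jet conditions "$\p^{|\be|}(f - \text{the } H\text{-predicted value})$ vanishes at $\xi$ for $|\be| \le k$", which are closed quasianalytic conditions, so $S$ is an intersection of a decreasing sequence of closed sets, and by topological noetherianity (\cite[Thm.\,6.1]{BMselecta}) this sequence stabilizes near the compact fibre; then $S$ is locally closed-by-being-closed, and openness reduces to the finite-jet level where the implicit/Taylor-expansion machinery of Theorem~\ref{thm:contin} applies. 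Assembling: openness of $S$ comes from Theorem~\ref{thm:contin} plus the fibre-constancy of the $f_\be$; closedness comes from writing $S$ as a locally finite intersection of closed quasianalytic sets via the jet conditions, using noetherianity to control the intersection on the compact-enough fibre.
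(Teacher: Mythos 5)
The overall strategy you propose — propagate the formal identity from a single point to a neighbourhood, using quasianalyticity and some version of the machinery of Section~\ref{sec:contin} — is in the right spirit, and your instinct to write $S$ as an intersection of jet-level closed quasianalytic conditions is exactly what the paper does for closedness. But the route you take for openness has a genuine gap, and you also invert which direction is difficult.

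The key gap is the ``reduction to the hypotheses of Theorem~\ref{thm:contin}.'' That theorem requires a $\cQ$-mapping between open subsets of $\IR^n$ of the \emph{same} dimension, with Jacobian determinant equal to a monomial times a unit. Here $\vp: M\to\IR^n$ has $\dim M = m$ which need not equal $n$, and $\vp$ is completely arbitrary; there is no Jacobian-determinant hypothesis to exploit. Your proposed fixes (local splitting when $\vp$ is a submersion, blowing up near critical points, passing to a fibre product) are not justified and do not obviously work: if $m<n$ the map is never a submersion; if you blow up $M$ to make $\vp\circ\s$ monomial you have changed the source space, and you would then need to descend information from the blown-up space back to $M$ — which is itself a quasianalytic-continuation problem of exactly the type being developed, so there is a serious risk of circularity and in any case a lot of unstated work. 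The paper's actual proof avoids all of this: it is purely local and elementary. In a coordinate chart at a point $a\in\vp^{-1}(0)$ one expands $\hf_x(u)-(H\circ\hvp_x)(u)=\sum_\al \al!^{-1}\bigl(\p^\al f(x)-K_\al(x)\bigr)u^\al$, where each $K_\al$ is a quasianalytic function built universally from $H$ and finitely many derivatives of the components of $\vp$; thus $S=\bigcap_\al\{\,\p^\al f=K_\al\,\}\cap\vp^{-1}(0)$ is an intersection of closed sets, so closedness is immediate (no noetherianity or stratification required — you expend effort on the easy half). Openness is where the work lies: starting from $\hf_0-H\circ\hvp_0=0$, one composes formally with an arbitrary quasianalytic arc $\ga$ lying in $\vp^{-1}(0)$ with $\ga(0)=0$, obtains $(\p^\al f\circ\ga)^\wedge_0-(K_\al\circ\ga)^\wedge_0=0$ for every $\al$, and concludes $\p^\al f\equiv K_\al$ along $\ga$ by the quasianalyticity axiom. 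No resolution of singularities, no Theorem~\ref{thm:contin}, no condition on the Jacobian of $\vp$. Your curve-composition step in the middle of the proposal is close in spirit to this, but you lose the thread by trying to cancel $\hal_0$ rather than comparing Taylor coefficients of $\p^\al f$ and $K_\al$ directly along the curve.
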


\begin{proof}
We work in a local coordinate chart of $M$ with coordinates $u = (u_1,\ldots,u_m)$ at a point $a=0$ in $\vp^{-1}(0)$. 
Write $H= \sum_{\be \in \IN^{n}} H_\be v^\be /\be!$, where $v = (v_1,\ldots,v_n)$. For $x \in \vp^{-1}(0)$,
\begin{align*}
\hf_x(u) - (H\circ\hvp_x)(u) &= \sum_{\al \in \IN^m} \frac{\p^\al f(x)}{\al!}u^\al 
                               - \sum_{\be \in \IN^n} \frac{H_\be}{\be!}\left(\sum_{|\al| > 0} \frac{\p^\al\vp(x)}{\al!}u^\al\right)^\be\\
                                           &= \sum_{\al \in \IN^m} \frac{1}{\al!} \left(\p^\al f(x) - K_\al(x)\right)u^\al,
\end{align*}
where each $K_\al(x)$ is a finite linear combination of products of derivatives of the components of $\vp$ (defined
in the coordinate neighbourhood). (We write $\p^{\al} := \p^{|\al|}/\p x^{\al}$ in this proof, and use the same notation
for the formal derivative of a power series, below.) If $x \in \vp^{-1}(0)$, then $\hf_x = H\circ\hvp_x$ if and only if 
$\p^\al f(x) - K_\al(x) = 0$, for all $\al$; i.e., $S$ is closed.

To show that $S$ is open, it is enough to prove that, if $a=0 \in S$ (i.e., $\hf_0 - H\circ\hvp_0 =0$),
then $\p^\al f - K_\al$ vanishes on a (common) neighbourhood of $a$ in $\vp^{-1}(0)$, for all $\al$; i.e.,
$(\p^\al f)(\ga(t)) - K_\al(\ga(t)) = 0$, for all $\al$, for any quasianalytic curve $\ga(t)$ in $\vp^{-1}(0)$, $\ga(0) = 0$.

Consider such a curve $\ga(t)$. Since $\hf_0 - H\circ\hvp_0 =0$ and $\vp\circ\ga = 0$,
\begin{align*}
0 &= \hf_0(\hga_0(t) + u) - H(\hvp_0(\hga_0(t) + u))\\
   &= \hf_0(\hga_0(t) + u) - H(\hvp_0(\hga_0(t) + u) - \hvp_0(\hga_0(t)))\\
   &= \sum_{\al} \frac{\p^\al \hf_0(\hga_0(t))}{\al!}u^\al - \sum_{\al} \frac{\wK_{\al,0}(\hga_0(t))}{\al!}u^\al;
\end{align*}
i.e., for all $\al$, $(\p^\al f\circ\ga)^\wedge_0(t) - (K_\al\circ\ga)^\wedge_0(t) = 0$; therefore,
$(\p^\al f)(\ga(t)) - K_\al(\ga(t)) = 0$, by quasianalyticity.
\end{proof}

\begin{remarks}\label{rem:maps}
(1)\, We will use the results above (apart from Proposition \ref{prop:fibre})
in the case that $\s$ is a quasianalytic mapping of one of two kinds:
\begin{enumerate}
\item[(a)] $\s: M \to V$ is a \emph{blowing-up} of $V$ with centre a closed submanifold of $V$ of class
$\cQ$, or, more generally, $\s$ is a finite composite of admissible blowings-up, where a blowing-up
is called \emph{admissible} is its center is a $\cQ$-manifold that has
only normal crossings with respect to the exceptional divisor.
\smallskip
\item[(b)]$\s: U \to V$ is a \emph{power substitution}
$$
(y_1,\ldots,y_n) = (x_1^{k_1},\ldots,x_n^{k_n}),
$$
where the exponents $k_i$ are positive integers, or, more generally, 
$$
\s: \coprod\limits_{\ep \in \{-1,1\}^n} U^\ep \to V,
$$
where $\coprod$ means disjoint union, each $U^\ep$, $\ep = (\ep_1,\ldots,\ep_n) \in \{-1,1\}^n$,
is a copy of $U$, and $\s^\ep := \s|_{U^\ep}$ is given by
$$
(y_1,\ldots,y_n) = (\ep_1 x_1^{k_1},\ldots, \ep_n x_n^{k_n}).
$$
(We assume that $V$ is of the form $\prod_{i=1}^n (-\de_i, \de_i)$, where each $\de_i > 0$.
The images $\s^\ep(U^\ep)$ are unions of closed quadrants, covering $V$.)
\end{enumerate}
\noindent
(2)\, Corollary \ref{cor:contin} extends in an immediate way to the case that, instead of
$\s: M \to V$, we have a locally finite covering $\{\s_j: M_j \to V\}$ of $V$ by quasianalytic
mappings, where each $\s_j$ is a finite composite of admissible local blowings-up. This
version of the corollary will be needed in the proof of Theorem \ref{thm:main}.

A family of mappings $\{\s_j: M_j \to V\}$ is a \emph{locally finite covering} of $V$ if (a) the images
$\s_j(M_j)$ are subordinate to a locally finite covering of $V$ by open subsets; (b) if $K$ is a
compact subset of $V$, then there is a compact subset $K_j$ of $M_j$, for each $j$, such
that $K = \bigcup\s_j(K_j)$ (the union is finite, by (a)). A \emph{local blowing-up} of $V$ is a blowing-up
over an open subset of $V$.
\end{remarks}

\begin{corollary}\label{cor:quasiancontin}
Assume in Corollary \ref{cor:contin} that $\s$ is a mapping of either kind in Remarks \ref{rem:maps}(1).
Then there is a (perhaps larger) quasianalytic class $\cQ'$ depending only on $\cQ$ and $\s$,
and there exists $h \in \cQ'(W\bigcap\s(U))$ such that $f = h\circ\s$. Likewise for the version of Corollary
\ref{cor:contin} given in Remarks \ref{rem:maps}(2).
\end{corollary}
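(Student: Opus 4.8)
The plan is to extract a $\cC^\infty$ solution and then upgrade its regularity, using the results of Section \ref{sec:reg} together with the conventions of Section \ref{sec:intro}. By Corollary \ref{cor:contin}(3) --- and, for the covering situation of Remarks \ref{rem:maps}(2), by the extension of that corollary noted there --- there is a $\cC^\infty$ function $h$, with $f=h\circ\s$, defined on the neighbourhood appearing in the statement (denoted $W\cap\s(U)$ there). It remains only to show that $h$ belongs to a quasianalytic class $\cQ'$ depending on $\cQ$ and $\s$ alone.

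First I would note that $h$ is definable in the appropriate $o$-minimal structure. A power substitution is polynomial and an admissible blowing-up with $\cQ$-submanifold centre is a $\cQ$-mapping, so $\s$ is definable in the structure generated by $\cQ$ --- which is the structure defining $\cQ$ itself in case (2), and $\IR_{\cQ_M}$ in case (1). Since the graph of $h$ is the image of the graph of $f$ under the definable map $(x,t)\mapsto(\s(x),t)$, it is a definable set, and, $h$ being already $\cC^\infty$, $h$ is a definable function. In the definable case (2) this alone gives $h\in\cQ$, so we may take $\cQ'=\cQ$; in the Denjoy-Carleman case (1) it gives $h\in\cC^\infty_M$.

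Now suppose $\cQ=\cQ_M$. For a power substitution $\s$, the functions $h$ and $f=h\circ\s$ satisfy the hypotheses of Lemma \ref{lem:power} on each of the finitely many signed branches $\s^\ep$ (Remark \ref{rem:power}), so $h\in\cQ_{M^{(p)}}$ on each image $\s^\ep(U^\ep)$, where $p=\max_i k_i$; since finitely many such images cover a neighbourhood of every point of $W$, we get $h\in\cQ_{M^{(p)}}$ on $W$. For a finite composite $\s=\tau_1\circ\cdots\circ\tau_N$ of admissible blowings-up I would remove the blowings-up one at a time. Set $g_i:=h\circ\tau_1\circ\cdots\circ\tau_i$ (so $g_0=h$ and $g_N=f$); each $g_i$ is $\cC^\infty$ because $h$ is $\cC^\infty$ and the $\tau_i$ are $\cQ$-mappings, and $g_i=g_{i-1}\circ\tau_i$. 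Applying Lemma \ref{lem:blup} in local coordinates to the blowing-up $\tau_N$ (whose centre is a $\cQ_M$-submanifold) gives $g_{N-1}\in\cQ_{M^{(2)}}$; since $\cQ_M\subseteq\cQ_{M^{(2)}}$, the centre of $\tau_{N-1}$ is also a $\cQ_{M^{(2)}}$-submanifold, so a further application gives $g_{N-2}\in\cQ_{(M^{(2)})^{(2)}}=\cQ_{M^{(4)}}$, and inductively $h=g_0\in\cQ_{M^{(2^N)}}$. In either case we take $\cQ'=\cQ_{M^{(p)}}\cap\cC^\infty_M$ --- the quasianalytic class singled out in Section \ref{sec:intro} --- with $p=\max_i k_i$, respectively $p=2^N$; together with the previous paragraph this gives $h\in\cQ'$. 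For the covering version of Remarks \ref{rem:maps}(2), the same computation carried out in a coordinate chart of each $\s_j$ near a point of its fibre shows that $h$ is locally of class $\cQ_{M^{(p)}}$ with $p=2^N$, where $N$ bounds the number of blowings-up occurring in the $\s_j$; as membership in $\cQ'$ is a local property, $h\in\cQ'(W)$.

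I expect the only genuine obstacle to be the bookkeeping in the blowing-up case: one must check that the intermediate functions $g_i$ really are $\cC^\infty$ (so that Lemma \ref{lem:blup} applies at each stage), that the successive centres remain submanifolds of the shifted classes $\cQ_{M^{(2^k)}}$ (which follows from the inclusions $\cQ_M\subseteq\cQ_{M^{(2)}}\subseteq\cQ_{M^{(4)}}\subseteq\cdots$), and --- in the covering case --- that a single exponent $p$ bounds the loss of regularity uniformly over the covering. Each of these is routine, but they are the points that must be handled with care.
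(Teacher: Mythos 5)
Your proof is correct and takes essentially the same approach as the paper, which disposes of the corollary in a single sentence citing Corollary \ref{cor:contin} and Lemmas \ref{lem:power}, \ref{lem:blup}. You have merely made explicit the steps the authors leave implicit: the one-blowing-up-at-a-time induction (with the observation that $\cQ_M \subseteq \cQ_{M^{(2^k)}}$ keeps the centres in the shifted class), the graph-image argument showing $h \in \cC^\infty_M$, and the localization needed for the covering case of Remarks \ref{rem:maps}(2).
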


This is a consequence of Corollary \ref{cor:contin} and 
Lemmas \ref{lem:power}, \ref{lem:blup} (see also Remark \ref{rem:power}). 
Note that, to prove Corollary \ref{cor:quasiancontin},
we need to use only Glaeser's original theorem \cite{G} (rather than a quasianalytic version) 
in the proof of Corollary \ref{cor:contin}(3), because blowings-up or power substitutions
are algebraic (polynomial) mappings (with respect to suitable quasianalytic coordinates).

We illustrate the use of the techniques above in two special cases of our main theorems:

\begin{proposition}[Membership in a principal ideal; cf. \cite{Ndiv}]\label{prop:princ}
Let $\cQ$ be a quasianalytic class and let $g \in \cQ(V)$, where $V$ is a neighbourhood of 
$0$ in $\IR^n$. Then there is a quasianalytic class $\cQ' \supseteqq \cQ$ such that,
given $f \in \cQ(V)$ and a formal power series $H \in \cF_0$ such that $\hf_0 = H\cdot \hg_0$, 
there exists $h \in \cQ'(W)$, where $W$ is a neighbourhood of $0$ in $V$, such that $f = h\cdot g$.
\end{proposition}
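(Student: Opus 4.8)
The plan is to use resolution of singularities to reduce to the case where $g$ is, locally, a monomial times a unit; in that case the division is immediate after pulling back, and the resulting quotient can be pushed down to $V$ by the quasianalytic continuation results of this section. To begin, dispose of a trivial case: if $g\equiv 0$ near $0$, then $\hf_0 = H\cdot\hg_0 = 0$, so $f\equiv 0$ near $0$ by quasianalyticity and $h=0$ works. So assume $g\not\equiv 0$ near $0$. Shrinking $V$, resolution of singularities of the ideal $(g)$ in $\cQ$ produces a proper $\cQ$-mapping $\s:M\to V$ which is a finite composite of admissible blowings-up, such that every point of $M$ has a coordinate chart $U$ — chosen as a product of intervals, so that each $(x_i=0)$ is connected — in which $g\circ\s$ is a monomial times a unit and $\det(\p\s/\p x)$ is a monomial times a unit; thus $\s|_U$ satisfies the hypotheses of Theorem \ref{thm:contin}, and $g\circ\s$ vanishes only on a normal crossings divisor, hence on a nowhere dense subset of $M$. (Alternatively one could use a locally finite covering by composites of admissible local blowings-up, as in Remarks \ref{rem:maps}(2).)

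Next comes the division upstairs. Fix $a\in\s^{-1}(0)$ and a chart $U$ as above, and write $g\circ\s = u\cdot x^\al$ with $u$ a unit on $U$, absorbing into $u$ those factors $x_i^{\al_i}$ with $a_i\neq 0$ (so that $\al_i>0$ implies $a_i=0$). Since $\s(a)=0$, composing the formal identity $\hf_0 = H\cdot\hg_0$ with $\hs_a$ yields $(f\circ\s)^{\wedge}_a = (H\circ\hs_a)\cdot(g\circ\s)^{\wedge}_a$, so $(f\circ\s)^{\wedge}_a$ is divisible by $x^\al$ in $\cF_a$. I would then extract this monomial from $f\circ\s$ by applying the division axiom \ref{def:quasian}(1) one coordinate at a time, $\al_i$ times for each $i$: at each stage the restriction of the current function to $(x_i=0)$ has Taylor expansion $0$ at $a$ (the Taylor expansion is divisible by $x_i$), hence vanishes near $a$ by quasianalyticity \ref{def:quasian}(3), so axiom (1) applies. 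This gives $f\circ\s = x^\al\cdot v_a$ with $v_a\in\cQ$ near $a$; then $q_a := v_a/u$ is quasianalytic near $a$ (Remark \ref{rem:axioms}(2)), satisfies $f\circ\s = q_a\cdot(g\circ\s)$ near $a$, and has Taylor expansion $\hat q_a = \hs_a^*(H)$ there.

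Finally, I would glue and descend. On $\{g\circ\s\neq 0\}$ one has $q_a = (f\circ\s)/(g\circ\s)$ independently of $a$, and since that set is dense the various $q_a$ agree on overlaps by quasianalyticity, so they patch to a single $q\in\cQ$ on a neighbourhood of $\s^{-1}(0)$ in $M$ with $f\circ\s = q\cdot(g\circ\s)$. The identities $\hat q_a = \hs_a^*(H)$ for $a\in\s^{-1}(0)$ are precisely the hypothesis of Corollary \ref{cor:contin} (equivalently Corollary \ref{cor:quasiancontin}) with the given power series $H$, so there is a quasianalytic class $\cQ'\supseteqq\cQ$ depending only on $\cQ$ and $\s$ — hence only on $\cQ$ and $g$ — a neighbourhood $W$ of $0$ in $V$, and $h\in\cQ'(W)$ with $q = h\circ\s$ on $\s^{-1}(W)$. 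Then $f\circ\s = (h\circ\s)(g\circ\s) = (hg)\circ\s$ on $\s^{-1}(W)$, and since $\s$ maps $\s^{-1}(W)$ onto $W$, we conclude $f = h\cdot g$ on $W$.

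The step I expect to be the main obstacle is the second paragraph together with the gluing: converting \emph{formal} divisibility by the monomial $x^\al$ into honest divisibility in $\cQ$ via axioms \ref{def:quasian}(1),(3), and then checking that the locally defined quotients $q_a$ agree on overlaps — which relies on $g\circ\s$ having a nowhere dense zero set, i.e.\ on the normal crossings form obtained from resolution. The descent itself is a direct application of Corollary \ref{cor:quasiancontin} once one has verified that its formal-composite hypothesis holds with exactly the originally given $H$.
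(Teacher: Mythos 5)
Your proof is correct and follows the same route as the paper: resolve $(g)$ to a normal-crossings form, divide upstairs using axioms \ref{def:quasian}(1),(3), and descend via Corollary~\ref{cor:quasiancontin}. The paper states the upstairs division and gluing ($f\circ\s = \vp\cdot(g\circ\s)$ with $\vp\in\cQ(M)$) in one line; you supply the same content in more detail, including the iterative monomial extraction, the density argument for patching, and the trivial case $g\equiv 0$.
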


\begin{remarks}\label{rem:princ} 
(1)\, Thilliez has studied several cases of functions $g \in \cQ(V)$, where $\cQ$ is a quasianalytic Denjoy-Carleman
class $\cQ_M$, which satisfy the following property: if $f \in \cQ(V)$ 
and $\hf_a$ is divisible by $\hg_a$, for all $a\in V$, then $f=h\cdot g$, where $h \in \cQ(V)$ (see \cite[Section 4]{Th1} 
and Remark \ref{rem:Weier}(2) below). By 
Proposition \ref{prop:princ}, formal divisibility at a single point implies formal divisibility throughout a neighbourhood.

\smallskip\noindent
(2)\, If $f$ is merely $\cC^\infty$, the latter statement is not true, and
it is necessary to assume that $\hf_b$ is divisible by $\hg_b$
throughout a neighbourhood $W$ of $0$ in order to guarantee that $f = h\cdot g$, where
$h \in \cC^\infty(W)$ (quasianalytic version of the {\L}ojasiewicz-Malgrange division theorem
\cite[Thm.\,6.4]{BMselecta}). The proof in \cite{BMselecta} nevertheless works for Proposition \ref{prop:princ}, 
using the division axiom \ref{def:quasian}(1) and Corollary \ref{cor:quasiancontin}:
\end{remarks}

\begin{proof}[Proof of Proposition \ref{prop:princ}]
(Shrinking $V$ if necessary) there is a mapping $\s: M \to V$ given by a finite composite
of blowings-up as in Remarks \ref{rem:maps}(1)(a), such that $g\circ \s$ is a monomial times an invertible
factor (in suitable quasianalytic coordinates) in some neighbourhood of any point of $\s^{-1}(0)$. 
By axiom \ref{def:quasian}(1), $f\circ\s = \vp\cdot g\circ\s$, where $\vp \in \cQ(M)$. Clearly, $\hat{\vp}_a = \hs^*_a(H)$, 
for all $a \in \s^{-1}(0)$, so the result follows from Corollary \ref{cor:quasiancontin}.
\end{proof}

\begin{proposition}[$k$'th root of a quasianalytic function]\label{prop:root}
Let $\cQ$ be a quasianalytic class and let $g \in \cQ(V)$, where $V$ is a neighbourhood of 
$0$ in $\IR^n$. Then there is a quasianalytic class $\cQ' \supseteqq \cQ$ such that, if
$k$ is a positive integer and $g$ has a $k$'th root in formal power series at $0$; i.e., $\hg_0 = H^k$, where 
$H \in \cF_0$, then there is a neighbourhood $W$ of $0$ in $V$ and a quasianalytic function
$h \in \cQ'(W)$ such that $g = h^k$.
\end{proposition}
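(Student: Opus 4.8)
The plan is to follow the strategy of Proposition~\ref{prop:princ}: use resolution of singularities to reduce $g$ to monomial form, extract a $k$'th root of the monomialization upstairs, and then descend to a solution downstairs by means of Corollary~\ref{cor:quasiancontin}.

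\emph{Step 1 (monomialization).} After shrinking $V$, resolution of singularities of the principal ideal $(g)$ in the quasianalytic category (\cite{BMinv}, \cite{BMselecta}) produces a proper $\cQ$-mapping $\s: M \to V$, a finite composite of admissible blowings-up as in Remarks~\ref{rem:maps}(1)(a), such that near every $a \in \s^{-1}(0)$ there are $\cQ$-coordinates $x = (x_1,\ldots,x_n)$ centred at $a$ in which
\[
g\circ\s \,=\, x^{\al}\, u,
\]
where $\al \in \IN^n$ and $u \in \cQ$ is nonvanishing. Since $\det(\p\s/\p x)$ is a monomial times an invertible factor in these coordinates, $\s|_U$ satisfies the hypotheses of Theorem~\ref{thm:contin}.

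\emph{Step 2 (a quasianalytic root upstairs).} The hypothesis $\hg_0 = H^k$ propagates formally: for $a \in \s^{-1}(0)$, since composition of formal power series is a ring homomorphism, $\widehat{g\circ\s}_a = \hg_0\circ\hs_a = (H\circ\hs_a)^k$ in $\cF_a$. Put $G_a := H\circ\hs_a$. Comparing, for each $i$, the order of vanishing of $x^{\al}\hat u_a = G_a^k$ along $(x_i = 0)$ (using that $\hat u_a$ is a unit in $\cF_a$) gives $\al_i = k\,\ord_{x_i}G_a$; hence $\al = k\be$ with $\be := \al/k \in \IN^n$, and $G_a = x^{\be}w_a$ with $w_a \in \cF_a$ a unit and $w_a^k = \hat u_a$. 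In particular $u(a) = w_a(0)^k$, so $u(a) > 0$ when $k$ is even; as $u$ is nonvanishing on the (connected) chart, $u > 0$ there for even $k$. The real $k$'th root $u^{1/k}$ is then quasianalytic: writing $u = u(a)(1+v)$ with $v(a) = 0$ and $|v| < 1$ after shrinking, the implicit function theorem in $\cQ$ (Remark~\ref{rem:axioms}(2)), applied to $w^k - 1 - t$ at $(0,1)$, shows that $(1+t)^{1/k} \in \cQ$ near $0$, so $u^{1/k} := u(a)^{1/k}(1+v)^{1/k} \in \cQ(U')$ on a smaller chart $U'$ (here $u(a)^{1/k}$ is the unique real $k$'th root for odd $k$, the positive one for even $k$). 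Set $f_U := \ep\, x^{\be}\, u^{1/k}$, where $\ep \in \{\pm 1\}$ is chosen so that $\widehat{f_U}_a = H\circ\hs_a$; such an $\ep$ exists because the real $k$'th roots of the unit $\hat u_a$ in $\cF_a$ are $\pm(\hat u_a)^{1/k}$ (and just $+(\hat u_a)^{1/k}$ when $k$ is odd), so $w_a = \ep(\hat u_a)^{1/k}$ for a suitable sign, and then $\widehat{f_U}_a = \ep(\hat u_a)^{1/k}x^{\be} = w_a x^{\be} = G_a$.

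\emph{Step 3 (descent).} Steps 1 and 2 verify the hypotheses of Corollary~\ref{cor:contin}: over each point $a$ of $\s^{-1}(0)$ there is a neighbourhood $U$ on which $\s$ has the required form and a function $f_U \in \cQ(U)$ with $\widehat{f_U}_a = \hs_a^*(H)$. By Corollaries~\ref{cor:contin} and \ref{cor:quasiancontin} there are a neighbourhood $W$ of $0$ with $\s^{-1}(W) \subset \bigcup U$, a function $\tilde h \in \cQ(\s^{-1}(W))$ that agrees with each $f_U$ near $\s^{-1}(0)$, and a quasianalytic class $\cQ' \supseteqq \cQ$ depending only on $\cQ$ and $\s$ (hence only on $g$), together with $h \in \cQ'(W)$ such that $\tilde h = h\circ\s$. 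Near every point of $\s^{-1}(0)$ one has $\tilde h^k = f_U^k = x^{k\be}u = g\circ\s$, so $h^k\circ\s = \tilde h^k = g\circ\s$ on a neighbourhood of $\s^{-1}(0)$ in $\s^{-1}(W)$; since $\s$ is proper and surjective, this forces $h^k = g$ on a neighbourhood of $0$, which we take to be $W$.

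\emph{Main obstacle.} The only step that is not routine bookkeeping is the matching in Step~2: one must choose the locally defined quasianalytic root $f_U$ so that its Taylor expansion at $a$ is exactly the prescribed series $H\circ\hs_a$, not merely \emph{some} $k$'th root of $\widehat{g\circ\s}_a$ — this is what allows Corollary~\ref{cor:contin} to be invoked with the single given series $H$. For even $k$ this is precisely the point where one must know that $u > 0$ (equivalently, that $g \geq 0$ near $0$), which is extracted from the monomialization together with $\hg_0 = H^k$. The remaining ingredients — membership of $(1+t)^{1/k}$ in $\cQ$, surjectivity of $\s$, and the final quasianalyticity argument — are standard.
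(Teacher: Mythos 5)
Your proposal is correct and follows essentially the same approach as the paper: monomialize $g$ by a finite composite of admissible blowings-up, extract a quasianalytic local $k$'th root $f_U$ matching the formal data $\hs_a^*(H)$ at each point of $\s^{-1}(0)$, and descend via Corollary~\ref{cor:quasiancontin}. You simply spell out in full the steps the paper compresses into one sentence (divisibility of the exponent $\al$ by $k$, positivity of the unit $u$ for even $k$, quasianalyticity of $u^{1/k}$ via the implicit function theorem, and the sign choice so that $\widehat{f_U}_a = H\circ\hs_a$).
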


\begin{proof}
(Shrinking $V$ if necessary) there is a mapping $\s: M \to V$ given by a finite composite
of admissible blowings-up, such that $g\circ \s$ is a monomial times an invertible
factor (in suitable quasianalytic coordinates) in some neighbourhood $U$ of any point of $\s^{-1}(0)$.
By the hypothesis, this monomial is a $k$'th power, and we can take $f_U \in \cQ(U)$ such that
$g\circ\s|_U = f_U^k$ and $\hf_{U,a} = \hs^*_a(H)$, for all $a \in \s^{-1}(0)\cap U$. The result
follows from Corollary \ref{cor:quasiancontin}. 
\end{proof}

\section{Polynomial equations with quasianalytic coefficients}\label{sec:poly}

\begin{proof}[Proof of Theorem \ref{thm:poly}]
Let $\cQ(U,\IC)$ denote the ring of $\IC$-valued functions of quasianalytic class $\cQ$ on $U$.
We can consider $G(x,y)$ as an element of $\cQ(U,\IC)[y]$ and each $B_{ji} \in \IC\llb x\rrb[y]$, 
and it is enough to prove the result in the ring of polynomials with complex-valued quasianalytic coefficients.
We break the proof into a number of lemmas.

\begin{lemma}\label{lem:poly1}
We can assume that $a_1 = 0$, and that there exists $\al \in \IN^n\backslash\{0\}$ such that
$$
a_i(x)^{d!/i} = x^\al a_i^*(x),\quad i =2,\ldots, d,
$$
where each $a_i^* \in \cQ(U,\IC)$, and $a_i^*$ is a unit, for some $i$.
\end{lemma}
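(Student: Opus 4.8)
The plan is to reduce the polynomial $G(x,y)$ to the asserted normal form by two standard preparatory moves, as in the classical (analytic) proof of local factorization, combined with resolution of singularities of an ideal in the quasianalytic class $\cQ$.

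First I would perform a Tschirnhausen shift $y \mapsto y - a_1(x)/d$, which is legitimate since $\cQ(U,\IC)$ is a ring containing rationals; this kills the coefficient of $y^{d-1}$, so we may assume $a_1 = 0$. Note that a shift in $y$ by a quasianalytic function does not disturb the problem: an irreducible factorization of $G$ in $\IR\llb x\rrb[y]$ (or $\IC\llb x\rrb[y]$) is transported by the same shift, and a factorization in $\cQ'(V)[y]$ likewise; so it suffices to treat the shifted polynomial. If $G$ already factors with $d_j = 1$ for all $j$ (equivalently $G$ splits over $\IR\llb x\rrb$ into linear factors, which after the shift forces all roots to vanish to infinite order and hence, by quasianalyticity applied after the later resolution step, to be $0$), there is nothing to do; otherwise some $a_i$ is not identically zero, and the discriminant locus is a proper quasianalytic subset.

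Next, to arrange the monomialization $a_i(x)^{d!/i} = x^\al a_i^*(x)$, I would apply resolution of singularities of an ideal in the quasianalytic category --- this is the main tool flagged in Section~2, via \cite{BMinv}, \cite{BMselecta}, \cite[Thm.\,3.1]{BMV}. Concretely, consider the ideal $\cI \subset \cQ(U)$ generated by the functions $a_i(x)^{d!/i}$ for $i = 2, \ldots, d$ (so that membership is compatible across the $i$'s because $d!/i$ is an integer and $a_iM \le a_iN$-type comparisons of orders are preserved). Wait — more carefully, the exponents $d!/i$ are chosen precisely so that the $i$-th root of the ideal generated by all $a_i^{d!/i}$ has, after monomialization, a well-defined fractional behaviour in each coordinate chart; this is the usual trick reducing to a ``common'' monomial. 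After a finite composite $\s: M \to U$ of admissible blowings-up (with centres $\cQ$-submanifolds having normal crossings with the exceptional divisor), in each coordinate chart the pullback ideal $\s^*\cI$ is principal generated by a monomial $x^{d!\gamma}$ (say), so each $a_i \circ \s$ equals $x^\gamma$ times a factor whose $(d!/i)$-th power is a unit, hence $a_i\circ\s = \ep_i\, x^{\gamma}\, (\text{unit})$ by extracting roots of units (available since $\cQ$ is closed under inverse, and $\cQ'$ absorbs any loss of regularity from root extraction and from the blow-up via Lemmas~\ref{lem:power}, \ref{lem:blup}). One then checks that after these blowings-up the formal irreducible factorization descends chart by chart, and quasianalytic continuation (Corollary~\ref{cor:quasiancontin}) glues the chartwise coefficients $b_{ji}$ into honest elements of $\cQ'(V)$ whose Taylor expansions are the $B_{ji}$; this is what reduces Theorem~\ref{thm:poly} to the monomialized case treated in the subsequent lemmas of the paper. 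For the present lemma, though, only the reduction itself is asserted, so it suffices to record: after the shift and blow-up, we are reduced to studying $\prod_j G\circ\s$ in a chart where the $a_i\circ\s$ are monomials times units; and at least one $a_i^*$ is a unit (otherwise the whole monomialized polynomial is a unit times a power of $y$, i.e.\ the trivial case).

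The main obstacle I anticipate is the bookkeeping in the resolution step: one must choose the ideal $\cI$ and the exponents so that a \emph{single} monomial $x^\al$ works simultaneously for all $i = 2, \ldots, d$, which is exactly why the $d!$ appears (it clears all denominators $i$). A secondary subtlety is ensuring $\al \neq 0$: if after monomialization $\al = 0$ then all $a_i^*$ are units at the centre, which (with $a_1 = 0$) would mean $G\circ\s$ has no roots tending to the centre of the chart, and one argues this case is already in the desired form or is excluded by having passed to a chart meeting the discriminant locus. I would handle this by noting that the blowing-up sequence can be taken to be an isomorphism away from the (proper) zero set of the product $a_2\cdots a_d$, so on charts meeting that set the exceptional monomial $x^\al$ is genuinely nontrivial. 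None of these steps requires new ideas beyond resolution of singularities in $\cQ$, closure under inverse, and the quasianalytic continuation machinery already set up in Section~\ref{sec:contin}.
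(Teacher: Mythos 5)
Your overall strategy -- Tschirnhausen shift to kill $a_1$, resolution of singularities of the ideal $\cI=(a_i^{d!/i})_{i\ge 2}$, then descend by quasianalytic continuation (Corollary~\ref{cor:quasiancontin}) -- is exactly the paper's. But there is a genuine error in the middle of your argument.

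You assert that after the blowings-up ``each $a_i\circ\s$ equals $x^\gamma$ times a factor whose $(d!/i)$-th power is a unit, hence $a_i\circ\s = \ep_i\, x^\gamma\, (\text{unit})$ by extracting roots of units.'' Resolution of singularities of $\cI$ does not give this. Principalization tells you that the pullback ideal is generated by a monomial $z^\al$, so each $a_i(\s(z))^{d!/i}$ is \emph{divisible} by $z^\al$, i.e.\ $a_i(\s(z))^{d!/i}=z^\al a_i^*(z)$ with $a_i^*\in\cQ$; but only one of the generators (the one that achieves the generic order) need equal $z^\al$ times a unit. The others can vanish to arbitrarily higher order, so the corresponding $a_i^*$ are honest (possibly vanishing) quasianalytic functions, not units. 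That is precisely why the lemma's conclusion reads ``$a_i^*$ is a unit, \emph{for some} $i$.'' Your overclaim would make the subsequent factorization argument in Lemma~\ref{lem:poly3} (where one uses that some $\ta_i(0)\neq 0$ but not all) trivial and incorrect. Moreover the root extraction $a_i\circ\s = x^\gamma\cdot(\text{unit})$ is not available at this stage even for the good index $i$: $z^\al$ need not be a $(d!/i)$-th power in the chart coordinates, and this is exactly what the power substitution in the \emph{next} lemma (Lemma~\ref{lem:poly2}) is for. Lemma~\ref{lem:poly1} only asserts the monomialization of the powers $a_i^{d!/i}$, nothing more.

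A smaller but real confusion: your description of the trivial case (``$G$ factors with $d_j=1$ for all $j$, hence the roots vanish to infinite order...'') is not correct -- a splitting into formal linear factors with $a_1=0$ imposes only $\sum c_j=0$, not that the $c_j$ vanish. The trivial case in the paper's proof is $\cI=(0)$, i.e.\ $a_i\equiv 0$ for $i=2,\dots,d$, so that $G=y^d$ after the shift.
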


\begin{proof}
We can reduce to the case that $a_1 = 0$, by a coordinate change $y' = y + a_1(x)/d$. Let
$\cI$ denote the ideal sheaf generated by the functions $a_i^{d!/i}$, $i=2,\ldots,d$. The theorem
is trivial if $\cI = (0)$. Otherwise, by resolution of singularities of $\cI$, 
there is a finite composite of admissible blowings-up $\s: M \to U$
(after shrinking $U$ to a relatively compact neighbourhood of $0$) such that any point
$a \in \s^{-1}(0)$ admits a coordinate neighbourhood $W$ (with coordinates $z = (z_1,\ldots,z_n)$, say)
in which the pullback of $\cI$ is generated by a monomial $z^\al$, $\al \in \IN^{n}$; i.e.,
$$
a_i(\s(z))^{d!/i} = z^\al a^*_i(z),\quad i=2,\ldots,d,
$$
where $a_i^*$ is a unit in $\cQ(W,\IC)$, for some $i$ \cite[Thm.\,5.9]{BMselecta}. 

By Corollary \ref{cor:quasiancontin}, it is enough to find quasianalytic functions $c_{ji}(z)$
such that $\hc_{ji,a} = \hs^*_a(B_{ji})$, $j=1,\ldots,k$, $i=1,\ldots,d_j$, and
$$
G(\s(z),y) = \prod_{j=1}^k \left(y^{d_j} + c_{j1}(x)y^{d_j -1} + \cdots + c_{j,d_j}(x)\right)
$$
in $\cQ'(V,\IC)[y]$. Therefore, we can replace $G$ by $G(\s(z),y)$
and each $B_{ji}$ by $\hs^*_a(B_{ji})$ to get the lemma.
\end{proof}

\begin{notation}\label{notn:power}
For any positive integer $k$, we will write $x^k$ to denote $(x_1^k,\ldots,x_n^k)$.
For any $\ep \in \{-1,1\}^n$, we will write $\tau_\ep^k$ to denote the substitution
$$
\tau_\ep^k(x) := (\ep_1 x_1^k, \ldots, \ep_n x_n^k).
$$
\end{notation}

\begin{lemma}\label{lem:poly2}
We can assume, moreover, that
$$
a_i(x) = x^{i\al}\ta_i(x),\quad i =2,\ldots, d,
$$
where each $\ta_i \in \cQ(U,\IC)$, and $\ta_i$ is a unit, for some $i$.
\end{lemma}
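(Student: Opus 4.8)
The plan is to apply a power substitution that turns the partial divisibility $a_i^{d!/i} = x^\al a_i^*$ of Lemma~\ref{lem:poly1} into the full divisibility $a_i = x^{i\al}\ta_i$. First I would fix the integer $p := d!$ (or a suitable common multiple) and consider the substitution $\tau := \tau_\ep^{p}$ of Notation~\ref{notn:power}, applied on each quadrant $\ep \in \{-1,1\}^n$; set $\tilde G(x,y) := G(\tau(x), y)$, with coefficients $\tilde a_i(x) := a_i(\tau(x)) \in \cQ(U,\IC)$. Raising the identity $a_i^{d!/i} = x^\al a_i^*$ to suitable powers and comparing, one sees that after the substitution $x \mapsto x^p$ the quantity $\tilde a_i(x)^{d!/i} = x^{p\al} a_i^*(x^p)$, and since $p\al$ is divisible by $d!/i \cdot (\text{something})$ — more precisely, choosing $p$ so that $p\al/ (d!/i)$ is an integer vector for every $i = 2,\ldots,d$, i.e. $p$ a multiple of $d!$ suffices since $(d!/i)\mid d!$ — the monomial part becomes a perfect $(d!/i)$-th power $x^{(ip\al/d!)\cdot(d!/i)}$. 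Extracting the $(d!/i)$-th root of $\tilde a_i^{d!/i} = \big(x^{ip\al/d!}\big)^{d!/i} a_i^*(x^p)$ and using that $a_i^*$ is a unit for some $i$ (so $a_i^*(x^p)$ has a $(d!/i)$-th root as a unit, after possibly passing to a connected neighbourhood where $a_i^*$ doesn't vanish), one obtains $\tilde a_i(x) = x^{i\al'}\ta_i(x)$ with $\al' := p\al/d!$ and $\ta_i \in \cQ(U,\IC)$ a unit for that $i$. Relabelling $\al'$ as $\al$ gives the claimed form.

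The mechanism for descending from $\tilde G$ back to $G$ is identical to the one used at the end of the proof of Lemma~\ref{lem:poly1}: by Corollary~\ref{cor:quasiancontin} applied to the power substitution $\tau$ (which is a mapping of the kind in Remarks~\ref{rem:maps}(1)(b), and for which Lemma~\ref{lem:power} and Remark~\ref{rem:power} control the loss of regularity), it suffices to find quasianalytic functions $c_{ji}$ with $\hc_{ji,0} = \hat\tau_0^*(B_{ji})$ realizing the factorization of $\tilde G$ over $\cQ'$; then $c_{ji} = \tilde b_{ji}\circ\tau$ for suitable $\tilde b_{ji} \in \cQ'(W,\IC)$, and these $\tilde b_{ji}$ factor $G$ itself. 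So I would replace $G$ by $\tilde G$ and each $B_{ji}$ by $\hat\tau_0^*(B_{ji})$, absorbing the power substitution into $\s$, exactly as in Lemma~\ref{lem:poly1}. Note the formal factorization transports correctly because $\hat\tau_0^*$ is a ring homomorphism $\IR\llb x\rrb \to \IR\llb x\rrb$, so it carries the irreducible-factorization identity in $\IR\llb x\rrb[y]$ to one of the same shape (the factors stay monic and their number $k$, degrees $d_j$ are unchanged, since $\tau$ is finite and dominant).

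The main obstacle I expect is bookkeeping around the exponents and the unit: one must check that a single exponent vector $\al$ (after rescaling by $p$) works simultaneously for all $i$, that the "unit for some $i$" property survives both the power substitution and the root extraction (which is where one uses that a nonvanishing quasianalytic function has a quasianalytic $m$-th root — via Axiom~\ref{def:quasian}(2), Remark~\ref{rem:axioms}(2), since $\exp$ and $\log$ are available, or directly by Proposition~\ref{prop:root} applied to the unit $a_i^*$), and that the divisibility $a_i = x^{i\al}\ta_i$ holds on the nose rather than just up to $(d!/i)$-th powers — this last point requires quasianalyticity (Axiom~\ref{def:quasian}(3)) to pass from equality of $(d!/i)$-th powers to equality of the functions themselves, after checking the signs/branch of the root match via the Taylor expansion at $0$. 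None of this is deep, but it is the only place where care is needed; everything else is a verbatim repetition of the Corollary~\ref{cor:quasiancontin} descent already carried out in Lemma~\ref{lem:poly1}.
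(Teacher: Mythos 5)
Your overall strategy is the same as the paper's: apply the power substitution $\tau_\ep^{d!}$ on each quadrant, show the pulled-back coefficients are divisible by $x^{i\al}$, and descend via Corollary~\ref{cor:quasiancontin}. That part is right, and the bookkeeping showing $p=d!$ works and the formal factorization transports under $\hat\tau_0^*$ is fine.

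The gap is in how you establish the divisibility $a_i(\tau_\ep^{d!}(x)) = x^{i\al}\ta_i(x)$ for \emph{every} $i=2,\ldots,d$. You frame this as ``extracting the $(d!/i)$-th root'' of $\tilde a_i^{\,d!/i}=(x^{i\al})^{d!/i}\,a_i^*(x^{d!})$, and justify the existence of the root only via the observation that $a_i^*$ is a unit -- but that is only true for \emph{some} $i$. For the remaining indices, $a_i^*(x^{d!})$ may vanish at $0$ and you have no a priori reason for it to admit a $(d!/i)$-th root; your appeal to Axiom~\ref{def:quasian}(3) to ``pass from equality of $(d!/i)$-th powers to equality of the functions'' presupposes the root already exists, so it doesn't close the loop. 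What actually makes this work (and is what the paper does) is a unique-factorization argument applied directly to $a_i(x^{d!})$, bypassing root extraction entirely: from $a_i^{\,d!/i}=x^\al a_i^*$ one gets $a_i(x)^{d!}=x^{i\al}(a_i^*)^i$, hence $a_i(x^{d!})^{d!}$ is divisible by $(x^{i\al})^{d!}$; since $\IC\llb x\rrb$ is a UFD with the $x_j$ irreducible, comparing $x_j$-adic valuations shows $a_i(x^{d!})$ is formally divisible by $x^{i\al}$ for \emph{every} $i$, and Axioms~\ref{def:quasian}(1),(3) then upgrade this to quasianalytic divisibility $a_i(x^{d!})=x^{i\al}\ta_i$. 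The unit property of $\ta_i$ for the good index $i$ then comes for free from $\ta_i^{\,d!/i}=a_i^*(x^{d!})$. Replacing your root-extraction step with this valuation argument fixes the proof.
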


\begin{proof}
For each $i=2,\ldots,d$, since $a_i(x)^{d!}$ is divisible by $x^{i\al}$, it follows that 
$a_i(x^{d!})^{d!}$ is divisible by $x^{id!\al}$, and therefore that $a_i(x^{d!})$ is divisible
by $x^{i\al}$ as a function of class $\cQ$ (using unique factorization of formal power 
series and axioms \ref{def:quasian}(1),\,(3)); i.e., 
$a_i(x^{d!}) = x^{i\al}\ta_i(x)$, $i=2,\ldots,d$, where
each $\ta_i$ is of class $\cQ$ (and $\ta_i$ is a unit, for some $i$). Likewise for
$a_i(\tau_\ep^{d!}(x))$, for any $\ep \in \{-1,1\}^n$. The assertion now follows from
Corollary \ref{cor:quasiancontin} since, according to the latter, it is enough to prove the
theorem after a power substitution $\tau_\ep^{d!}(x)$.
\end{proof}

\begin{lemma}\label{lem:poly3}
Under the assumptions of Lemmas \ref{lem:poly1}, \ref{lem:poly2}, $G(x,y)$ has a nontrivial
factorization $G = G_1G_2$ in $\cQ(U,\IC)[y]$, after perhaps shrinking $U$, and each formal
factor
$$
H_j(x,y) = y^{d_j} + B_{j1}(x)y^{d_j -1} + \cdots + B_{j,d_j}(x)
$$
splits as $H_j = H_{j1}H_{j2}$, where $H_{jl}$ is a formal factor of $G_l$, $l=1,2$ (perhaps
$H_{j1}$ or $H_{j2} =$ constant).
\end{lemma}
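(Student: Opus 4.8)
Recall where we are: after Lemmas~\ref{lem:poly1} and \ref{lem:poly2}, the polynomial $G(x,y) = y^d + a_2(x)y^{d-2} + \cdots + a_d(x)$ has $a_1 = 0$ and $a_i(x) = x^{i\al}\ta_i(x)$ with some $\ta_i$ a unit, for a fixed $\al \in \IN^n\setminus\{0\}$. The plan is to exploit the weighted homogeneity this monomial structure forces, in order to locate a $y$-value where $G$ and its $y$-derivative do not both vanish to top order, and then perform a Hensel-type splitting over $\IR\llb x\rrb$ and descend it to $\cQ$ using quasianalytic continuation.

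\textbf{Step 1: Pass to the weighted-homogeneous leading form.} Assign $y$ the weight $|\al|$ (or, more precisely, keep track of the multi-weight given by $\al$) and each $x_j$ weight $1$, so that every $a_i(x)y^{d-i} = x^{i\al}\ta_i(x)y^{d-i}$ has the same $x^\al$-weight as $y^d$ in the variable $y$; concretely, substitute $y = x^\al z$ (formally, or after a further blowing-up / chart change to make this substitution honest — note $x^\al$ is a monomial, so this is within the toolbox of Remarks~\ref{rem:maps}(1)), obtaining
\[
G(x, x^\al z) = x^{d\al}\bigl(z^d + \ta_2(x)z^{d-2} + \cdots + \ta_d(x)\bigr) =: x^{d\al}\,\widetilde G(x,z).
\]
Since some $\ta_i$ is a unit, $\widetilde G(0,z) = z^d + \ta_2(0)z^{d-2} + \cdots + \ta_d(0)$ is a \emph{nonzero} polynomial in $z$ of degree $d$ over $\IR$ (resp. $\IC$), not equal to $z^d$. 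Hence it has a root $z_0$ of multiplicity $m < d$ in $\IC$; the case $m = 0$, i.e. $\widetilde G(0,z_0)\neq 0$, would already give a unit factor and is handled trivially, so assume $1 \le m < d$ with a complex root, or — working over $\IC$ as the proof permits — any such $z_0$.

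\textbf{Step 2: Hensel splitting over $\IR\llb x\rrb$ (equivalently $\IC\llb x\rrb$).} The ring $\IC\llb x\rrb$ is Henselian (indeed complete local), so the coprime factorization $\widetilde G(0,z) = (z - z_0)^m \cdot Q(z)$ with $Q(z_0)\neq 0$ lifts: $\widetilde G(x,z) = P(x,z)\cdot R(x,z)$ in $\IC\llb x\rrb[z]$, where $P$ is monic of degree $m$ with $P(0,z) = (z-z_0)^m$ and $R$ is monic of degree $d-m$ with $R(0,z) = Q(z)$. Translating back through $z = x^{-\al}y$ and clearing, this yields a factorization $G(x,y) = G_1(x,y)\,G_2(x,y)$ in $\IC\llb x\rrb[y]$ with $\deg_y G_1 = m$, $\deg_y G_2 = d-m$, each a genuine proper factor (both degrees $\ge 1$). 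Since the irreducible factorization $G = \prod_j H_j$ over $\IC\llb x\rrb[y]$ is unique, each $H_j$ divides $G_1$ or $G_2$, and grouping gives $H_j = H_{j1}H_{j2}$ with $H_{jl} \mid G_l$ — this is the purely formal content of the asserted splitting of the $H_j$.

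\textbf{Step 3: Descend $G_1, G_2$ to $\cQ(U,\IC)$.} This is where Corollary~\ref{cor:quasiancontin} (and the division/quasianalyticity axioms) enter, and it is \emph{the main obstacle}. The coefficients of $G_1$ and $G_2$ are certain formal power series in $x$, and one must realize them as Taylor expansions of quasianalytic functions. The cleanest route: observe that the splitting constructed in Step~2 is determined by a separating idempotent in $\IC\llb x\rrb[y]/(G)$, or equivalently that $\gcd(G_1(0,y), G_2(0,y)) = 1$ in $\IC[y]$, so the resultant $\mathrm{Res}_y(G_1, G_2)(0)\neq 0$; then the coefficients of $G_1$ are smooth implicit-function solutions of the polynomial system expressing $G = G_1G_2$, and by axiom~\ref{def:quasian}(2) (closure under inverse / implicit function theorem) these solutions are of class $\cQ$ in a neighbourhood of $0$ — provided the substitution $y = x^\al z$ has been accounted for, which is exactly what Corollary~\ref{cor:quasiancontin} is designed to absorb (at the cost of passing to $\cQ'$). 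Equivalently, one applies Corollary~\ref{cor:quasiancontin} to the composite blowing-up/substitution $\s$ built in Step~1, using that the formal factors pull back to genuine coprime factorizations in each chart, to produce $b_{ji}\in\cQ'$ with the required Taylor expansions; the factorization identity $G = G_1 G_2$ then holds in $\cQ'(U,\IC)[y]$ by quasianalyticity, after shrinking $U$. I expect the bookkeeping between ``the substitution $y=x^\al z$ is not invertible'' and ``Corollary~\ref{cor:quasiancontin} still applies'' to be the delicate point, handled by phrasing everything in terms of the formal-composition property $\hf_a = \hs_a^*(H)$ along $\s^{-1}(0)$ rather than literal composition.
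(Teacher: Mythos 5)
Your overall scheme (depress the polynomial, factor the unit-coefficient polynomial $\tG(x,z)=z^d+\sum\ta_i(x)z^{d-i}$, lift by Hensel, and translate back to $G$) is the right one and is the paper's as well, but Step~3 contains a genuine error and an unnecessary detour that obscures why the lemma costs nothing in regularity.

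The error: you claim $\gcd(G_1(0,y),G_2(0,y))=1$. This is false. Since $\xi_i=x^{i\al}\txi_i$ and $\eta_i=x^{i\al}\teta_i$ all vanish at $x=0$, one has $G_1(0,y)=y^k$ and $G_2(0,y)=y^l$, so the gcd is $y^{\min(k,l)}\neq 1$ and the resultant $\mathrm{Res}_y(G_1,G_2)$ vanishes at $0$. The implicit-function/resultant argument therefore cannot be applied to the system $G=G_1G_2$ directly. What \emph{is} coprime at $0$ is the pair of Hensel factors of $\tG$: here Lemma~\ref{lem:resultant} applies in class $\cQ$ (axiom~\ref{def:quasian}(2)), giving a factorization $\tG=(z^k+\sum\txi_i z^{k-i})(z^l+\sum\teta_i z^{l-i})$ with $\txi_i,\teta_i\in\cQ(U,\IC)$. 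One then recovers $G=G_1G_2$ simply by setting $\xi_i:=x^{i\al}\txi_i$, $\eta_i:=x^{i\al}\teta_i$ and checking the coefficient identity $\sum_{i+j=m}\xi_i\eta_j=x^{m\al}\sum_{i+j=m}\txi_i\teta_j=x^{m\al}\ta_m=a_m$ directly. This is a purely algebraic rescaling; no actual substitution $y=x^\al z$ is performed, no blowing-up or power substitution arises, and Corollary~\ref{cor:quasiancontin} is not invoked. (That corollary in any case only covers maps $\s$ in the $x$-variables as in Remarks~\ref{rem:maps}(1); the map $y\mapsto x^\al z$ is not of that form.) In particular Lemma~\ref{lem:poly3} itself produces no loss of regularity — that loss happens in Lemmas~\ref{lem:poly1} and~\ref{lem:poly2}, which is where Corollary~\ref{cor:quasiancontin} is actually used.

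Two smaller points. First, your justification that $\tG(0,z)$ has a root of multiplicity $m<d$ is incomplete: ``not equal to $z^d$'' does not imply this in general (consider $(z-1)^d$). The correct reason uses Lemma~\ref{lem:poly1}'s normalization $a_1=0$: if $\tG(0,z)=(z-z_0)^d$, the vanishing $z^{d-1}$-coefficient forces $z_0=0$, hence $\tG(0,z)=z^d$, contradicting that some $\ta_i(0)\neq 0$. Second, in Step~2 the assertion ``each $H_j$ divides $G_1$ or $G_2$'' is not quite right, since the $H_j$ are irreducible over $\IR\llb x\rrb$ and may split into $\IC$-irreducible factors distributed between $G_1$ and $G_2$; the correct statement, and the one you ultimately write, is that unique factorization in $\IC\llb x\rrb[y]$ gives $H_j=H_{j1}H_{j2}$ with $H_{jl}\mid G_l$.
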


\begin{proof}
Since $\ta_i(0) \neq 0$, for some $i$, we can write 
$$
y^d + \ta_2(0)y^{d-2} + \cdots + \ta_d(0) \in \IC[y]
$$
as a nontrivial product of two polynomials with no common factor. Therefore, there 
is also a nontrivial  splitting
$$
y^d + \sum_{i=2}^d \ta_i(x)y^{d-i} = \left(y^k + \sum_{i=1}^k \txi_i(x)y^{k-i}\right)\cdot
                                                          \left(y^l + \sum_{i=1}^l \teta_i(x)y^{l-i}\right),
$$ 
where $k+l=d$ (see \cite[Lemma 3.1]{BMarc} and Lemma \ref{lem:resultant} below), so that
$$
G(x,y) = \left(y^k + \sum_{i=1}^k \xi_i(x)y^{k-i}\right)\cdot \left(y^l + \sum_{i=1}^l \eta_i(x)y^{l-i}\right),
$$
where $\xi_i(x) = x^{i\al}\txi_i(x)$, $i=1,\ldots,k$, and $\eta_i(x) = x^{i\al}\teta_i(x)$, $i=1,\ldots,l$.
The corresponding factorization of each $H_j$ follows from unique factorization of formal power series.
\end{proof}

Theorem \ref{thm:poly} follows, by induction on the degree $d$ of $G$.
\end{proof}

We recall \cite[Lemma 3.1]{BMarc} and its proof, since this result is needed also in Section \ref{sec:Weier}
below.

\begin{lemma}\label{lem:resultant}
Let $P(x,y) = y^d + \sum_{i=1}^d A_i(x)y^{d-i}$, where the coefficients are functions in some class
(e.g., formal power series in $x=(x_1,\ldots,x_n)$, or $\cC^\infty$
functions or functions of a quasianalytic class $\cQ$ in a neighbourhood of $0\in \IR^n$). Suppose that
$$
P(0,y) =  y^d + \sum_{i=1}^d A_i(0)y^{d-i} = Q(\be_0,y)R(\ga_0,y),
$$
where 
$$
Q(\be_0,y) = y^k + \sum_{i=1}^k \be_{0,i}y^{k-i} ,\quad R(\ga_0,y) = y^l + \sum_{i=1}^l \ga_{0,i}y^{l-i}
$$
are polynomials in $y$ with no common factor, $k+l=d$. Then
$$
P(x,y) = \left(y^k + \sum_{i=1}^k B_i(x)y^{k-i}\right)\cdot \left(y^l + \sum_{i=1}^l C_i(x)y^{l-i}\right),
$$
where the coefficients $B_i$, $C_j$ are functions of the given class.
\end{lemma}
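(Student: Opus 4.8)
The plan is to prove Lemma~\ref{lem:resultant} by the standard implicit function theorem argument on the coefficient spaces, exploiting the hypothesis that $Q(\be_0,y)$ and $R(\ga_0,y)$ have no common factor. First I would introduce the map
$$
\Phi : (B,C) \longmapsto \text{coefficients of } \Big(y^k + \sum_{i=1}^k B_i y^{k-i}\Big)\Big(y^l + \sum_{j=1}^l C_j y^{l-j}\Big),
$$
a polynomial map $\IR^k \times \IR^l \to \IR^d$ (or $\IC^k\times\IC^l\to\IC^d$), sending $(\be_0,\ga_0)$ to the coefficient vector of $P(0,y)$. The goal is to solve $\Phi(B(x),C(x)) = (A_1(x),\ldots,A_d(x))$ with $(B(0),C(0)) = (\be_0,\ga_0)$, in the given class.

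The key step is to check that the Jacobian of $\Phi$ at $(\be_0,\ga_0)$ is invertible. Writing $Q = Q(\be_0,y)$, $R = R(\ga_0,y)$, the differential of $\Phi$ in the direction of an increment $(b,c)$, with $b(y) = \sum b_i y^{k-i}$ and $c(y) = \sum c_j y^{l-j}$ of degrees $<k$, $<l$ respectively, is the coefficient vector of $b(y)R(y) + Q(y)c(y)$. Invertibility of $d\Phi$ is therefore exactly the statement that every polynomial of degree $<d$ can be written uniquely as $bR + Qc$ with $\deg b < k$, $\deg c < l$ — and this is precisely the coprimality of $Q$ and $R$ (B\'ezout: since $\gcd(Q,R)=1$ there exist $b,c$ with $bR+Qc=1$, hence we can hit any target, and a dimension count $k+l=d$ gives uniqueness; equivalently the resultant $\mathrm{Res}(Q,R)\neq 0$). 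This is the heart of the matter and the natural place to cite the nonvanishing of the resultant.

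Having established that $d\Phi$ is invertible at $(\be_0,\ga_0)$, I would invoke the implicit function theorem \emph{in the relevant class}: for $\cC^\infty$ functions or formal power series this is classical, and for a quasianalytic class $\cQ$ it is exactly axiom~\ref{def:quasian}(2) (closure under inverse), since $\Phi$ is a polynomial map, hence a $\cQ$-mapping, whose Jacobian is invertible at the relevant point. This yields a local $\cQ$-inverse (respectively $\cC^\infty$ or formal inverse) $\Psi$ near the coefficient vector of $P(0,y)$, and setting $(B(x),C(x)) := \Psi(A_1(x),\ldots,A_d(x))$ gives coefficients in the given class with $\Phi(B(x),C(x)) = (A_1(x),\ldots,A_d(x))$ and $(B(0),C(0))=(\be_0,\ga_0)$, which is the desired factorization. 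In the complex-valued case one works with $\Phi$ over $\IC$, noting that complex polynomial maps are still $\cQ$-mappings on the underlying real spaces with invertible (complex, hence real) differential, so the same argument applies. The only mild obstacle is bookkeeping the degree constraints $\deg b<k$, $\deg c<l$ to match dimensions correctly; once that is set up, invertibility of $d\Phi$ is immediate from coprimality and the rest is a direct application of the appropriate inverse/implicit function theorem.
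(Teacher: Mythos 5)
Your proposal is correct and follows essentially the same route as the paper: introduce the polynomial map $\Phi$ from coefficient pairs $(\be,\ga)$ to the coefficients of $Q(\be,y)R(\ga,y)$, observe that its Jacobian at $(\be_0,\ga_0)$ is invertible because it equals the resultant $\mathrm{Res}(Q,R)\neq 0$ (the paper states this directly; you derive it via the B\'ezout description $d\Phi(b,c)=bR+Qc$, which is the same fact spelled out), and then apply the inverse function theorem in the relevant class (axiom~\ref{def:quasian}(2) for $\cQ$) to solve for $(B(x),C(x))$. The only cosmetic difference is that the paper inverts $\Phi$ directly rather than phrasing it as an implicit function problem, but these are interchangeable here.
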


\begin{proof} Let
$$
Q(\be,y) = y^k + \sum_{i=1}^k \be_{i}y^{k-i} ,\quad R(\ga,y) = y^l + \sum_{i=1}^l \ga_{i}y^{l-i},
$$
where $\be=(\be_1,\ldots,\be_k) \in \IR^k$, $\ga=(\ga_1,\ldots,\ga_l)\in \IR^l$. Write
$$
Q(\be,y)R(\ga,y) = y^d + \sum_{i=1}^d \al_i(\be,\ga)y^{d-i}.
$$
Then the Jaobian determinant $\D(\be,\ga) := \det \p \al(\be,\ga) / \p (\be,\ga)$ is the resultant
of $Q,\,R$ as polynomials in $y$. By the inverse function theorem, since $\D(\be_0,\ga_0)\neq 0$,
we can write
$$
y^d + \sum_{i=1}^d \al_i y^{d-i} = Q(\be(\al),y)R(\ga(\al),y),
$$
where $\be(A(0)) = \be_0$, $\ga(A(0)) = \ga_0$, $A(x) = (A_1(x),\ldots,A_d(x))$. Then the assertion 
of the lemma holds with
$B_i(x) = \be_i(A(x))$, $C_j(x) = \ga_j(A(x))$. 
\end{proof}

\section{Quasianalytic equations}\label{sec:main}

In this section, we will prove Theorem \ref{thm:main} using Corollary \ref{cor:quasiancontin}. The latter 
allows us to follow the scheme of \cite{BMV}, in a simpler way. 

\begin{lemma}\label{lem:yreg}
We can assume, without loss of generality, that, for some positive integer $d$, $G(x,y)$ is
\emph{$y$-regular} of \emph{order} $d$ at $(a,b)$; i.e., $(\p^jG/\p y^j)(a,b) = 0$ if $j < d$,
but $(\p^dG/\p y^d)(a,b) \neq 0$.
\end{lemma}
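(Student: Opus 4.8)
The plan is to reduce, by a linear change of the $x$-variables that leaves $y$ fixed, to the case in which the restriction $y\mapsto G(a,y)$ does not vanish identically, and then to take $d$ to be its order of vanishing at $y=b$. After a translation we may assume $(a,b)=(0,0)$. Since $y=H(x)$ is a formal solution with $H(0)=0$, the constant term of $G(x,H(x))$ vanishes, so $G(0,0)=0$. The function $t\mapsto G(0,t)$ is quasianalytic, being $G$ composed with the polynomial map $t\mapsto(0,\dots,0,t)$; hence, as a quasianalytic function of one variable, it either vanishes identically near $t=0$ or vanishes there to a finite order (otherwise its Taylor series would be zero, so it would vanish identically by axiom \ref{def:quasian}(3)). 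If $G(0,\cdot)\not\equiv0$, let $d\ge1$ be that order: then $(\p^jG/\p y^j)(0,0)=0$ for $j<d$ and $(\p^dG/\p y^d)(0,0)\ne0$, which is exactly $y$-regularity of order $d$, and we are finished.

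Suppose then that $G(0,y)\equiv0$ near $y=0$. For $\lambda=(\lambda_1,\dots,\lambda_n)\in\IR^n$ put $G^\lambda(x,y):=G(x_1+\lambda_1y,\dots,x_n+\lambda_ny,\,y)$; this is a quasianalytic function near $(0,0)$, nonzero since it is obtained from $G$ by an invertible linear change of coordinates, and $G^\lambda(0,0)=0$. Let $\widehat{G}\in\IR\llb x,y\rrb$ denote the formal Taylor expansion of $G$ at $(0,0)$, which is nonzero by axiom \ref{def:quasian}(3), and write $\widehat{G}=\sum_{m\ge0}P_m$ with $P_m$ homogeneous of degree $m$ in $(x,y)$. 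The Taylor series of $t\mapsto G^\lambda(0,t)$ is $\widehat{G}(\lambda_1t,\dots,\lambda_nt,t)=\sum_mP_m(\lambda,1)\,t^m$. Choosing $m_0$ with $P_{m_0}\ne0$, the polynomial $\lambda\mapsto P_{m_0}(\lambda,1)$ is not identically zero, so for $\lambda$ outside a proper algebraic subset of $\IR^n$ this Taylor series is nonzero and hence $G^\lambda(0,\cdot)\not\equiv0$. Fix such a $\lambda$, which we may also take to satisfy $\sum_i\lambda_i\p_iH(0)\ne1$, as the parameters violating either condition form a set of measure zero.

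It remains to verify that replacing $G$ by $G^\lambda$ is harmless, i.e. that solving $G^\lambda(x,y)=0$ is equivalent to solving $G(x,y)=0$ and does not enlarge $\cQ'$. Formal solvability is preserved: by the formal implicit function theorem (invertibility coming from $\sum_i\lambda_i\p_iH(0)\ne1$) there is a unique $H^\lambda\in\cF_0$ with $H^\lambda(0)=0$ and $H^\lambda(x)=H\big(x_1+\lambda_1H^\lambda(x),\dots,x_n+\lambda_nH^\lambda(x)\big)$, and then $G^\lambda(x,H^\lambda(x))=0$ in $\cF_0$. Conversely, if $h^\lambda\in\cQ'(V')$ solves $G^\lambda(x,y)=0$ and has Taylor expansion $H^\lambda$ at $0$, then $\Phi(x):=\big(x_1+\lambda_1h^\lambda(x),\dots,x_n+\lambda_nh^\lambda(x)\big)$ is a $\cQ'$-mapping with $\Phi(0)=0$ and $\det D\Phi(0)=1+\sum_i\lambda_i\p_ih^\lambda(0)=\big(1-\sum_i\lambda_i\p_iH(0)\big)^{-1}\ne0$; by axiom \ref{def:quasian}(2) it has a local $\cQ'$-inverse $\Psi$ with $\Psi(0)=0$, and $h:=h^\lambda\circ\Psi\in\cQ'(V)$, for a neighbourhood $V$ of $0$, satisfies $G(x,h(x))=0$ and has Taylor expansion $H$ at $0$ (one checks $\hat h_0=H^\lambda\circ\hat\Psi_0=H$, since $\hat\Phi_0$ is precisely the formal map with $H^\lambda=H\circ\hat\Phi_0$). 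The coordinate change being linear, $\cQ'$ is unchanged. This reduces Theorem \ref{thm:main} for $G$ to the same statement for $G^\lambda$, which is $y$-regular of some finite order $d\ge1$ at $(0,0)$, as required.

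The argument is essentially routine; the two points needing care are the genericity step --- extracting, from $\widehat{G}\ne0$ and its homogeneous decomposition, a $\lambda$ with $G^\lambda(0,\cdot)\not\equiv0$ --- and the bookkeeping that shows the linear substitution intertwines formal and quasianalytic solutions in both directions, via the formal implicit function theorem and closure of $\cQ'$ under inverse.
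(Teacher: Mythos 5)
Your proposal takes a genuinely different route from the paper. Where the paper applies resolution of singularities to the ideal generated by the functions $\vp_i(x)=(\p^iG/\p y^i)(x,0)$, $i\in\IN$, pulls $G$ back by the resulting composite $\sigma$ of admissible blowings-up, divides by the monomial that generates the resolved ideal, and then descends via Corollary~\ref{cor:quasiancontin}, you instead apply a generic linear shear $(x,y)\mapsto(x+\lambda y,y)$ and invoke only the (formal and quasianalytic) implicit function theorem to transport solutions back and forth. Your argument is more elementary and shorter for this particular reduction; it avoids both the resolution machinery and the quasianalytic-continuation descent, and the verification of the genericity condition via the homogeneous decomposition of $\widehat G_{(0,0)}$ is correct. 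The two directions of the equivalence between solving $G=0$ and $G^\lambda=0$ are also checked correctly, including the identity $\widehat h_0 = H^\lambda\circ\widehat\Psi_0 = H$.

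There is, however, one point you should address before declaring victory. The class $\cQ'$ in Theorem~\ref{thm:main} is quantified before $H$: it is supposed to depend on $G$ alone, and the paper's reduction in Lemma~\ref{lem:yreg} preserves this because the resolution $\sigma$ is intrinsic to $G$. Your shear parameter $\lambda$ is required to satisfy not only the $G$-intrinsic condition $P_{m_0}(\lambda,1)\ne 0$ but also the transversality condition $\sum_i\lambda_i\p_iH(0)\ne 1$, which depends on $H$. Thus the reduced function $G^\lambda$, and with it every subsequent blowing-up and power substitution determining $\cQ'$, a priori varies with $H$. Your sentence ``the coordinate change being linear, $\cQ'$ is unchanged'' only accounts for the harmlessness of composing with $\Psi$; it does not show that the $\cQ'$ arising from $G^\lambda$ is the same for all admissible $\lambda$. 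This is likely repairable (for example, fix two values $\lambda$ and $2\lambda$ satisfying the $G$-condition, note that at least one of them also satisfies the $H$-condition for any given $H$, and take for $\cQ'$ a class containing both $\cQ'_\lambda$ and $\cQ'_{2\lambda}$), but as written the independence of $\cQ'$ from $H$ is a gap that the paper's resolution-based reduction does not have.
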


\begin{proof}
We can assume that $(a,b)=(0,0)$, so that $G(0,0)=0$ and $H(0)=0$.
Let $\vp_i := (\p^iG/\p y^i)(x,0)$, $i\in \IN$. By resolution of singularities \cite[Thm.\,3.1]{BMV},
after shrinking $U$ to a relatively compact neighbourhood of $0$, there is a $\cQ$-mapping
$\s: M \to U$ given by a finite composite of admissible blowings-up, such that any $a' \in \s^{-1}(0)$
admits a coordinate neighbourhood $W$ in which the ideal $\cJ$ generated by the restrictions of
$\vp_i\circ\s$, $i\in \IN$, is a principal ideal generated by a monomial $z^\al$, $\al\in \IN^n$,
where $z=(z_1,\ldots,z_n)$ (and $a'$ is the origin of the coordinate chart).

We claim that $G(\s(z),y)$ is divisible by $z^\al$; i.e., that $\tG(z,y) := z^{-\al}G(\s(z),y)$ is
a quasianalytic function. It is enough to show that, for each $i=1,\ldots,n$ such that $\al_i\neq 0$,
$G(\s(z),y)$ is divisible by $z_i$, or (according to axiom \ref{def:quasian}(1)), that $G(\s(z),y)$ vanishes on the
hyperplane $(z_i=0)$. For fixed $z$ such that $z_i=0$, $\Ga(y) := G(\s(z),y)$ is of class $\cQ$
and
$$
\frac{d^j\Ga}{dy^j}(0) = \frac{\p^j G}{\p y^j}(\s(z),0) = 0,\quad j\in\IN.
$$
By axiom \ref{def:quasian}(3), $\Ga$ vanishes identically, as required. Thus $\tG(z,y)$ is of class $\cQ$,
and $\tG(z,\hs^*_{a'}(H)(z)) = 0$.

Since the ideal $\cJ$ is generated by $z^\al$, there exists $d$ such that $\vp_d\circ\s|_W = z^\al$
times an invertible factor. Thus, $(\p^d\tG/\p y^d)(a',0) \neq 0$. It follows from Corollary \ref{cor:quasiancontin}
that we can assume $G(x,H(x))=0$, where $G$ is $y$-regular of some order $d$ at $0$.
\end{proof}

We now prove the theorem by induction on $d$. The case $d=1$ is a consequence of the
implicit function theorem (axiom \ref{def:quasian}(2)).
We can assume that $(a,b)=(0,0)$, so that $G(0,0)=0$ and $H(0)=0$.

\begin{lemma}\label{lem:tschirn}
We can assume that $\displaystyle{\frac{\p^{d-1}G}{\p y^{d-1}}(x,0) = 0}$.
\end{lemma}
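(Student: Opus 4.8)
The plan is to perform a Tschirnhaus-type transformation to kill the coefficient of $y^{d-1}$ in a suitable ``$d$-th derivative'' sense, and then to invoke Corollary \ref{cor:quasiancontin} to transport the reduction back to the original equation. First I would use the $y$-regularity of order $d$ from Lemma \ref{lem:yreg}: since $(\p^d G/\p y^d)(0,0)\neq 0$, the function $c(x) := (\p^{d-1}G/\p y^{d-1})(x,0) \big/ (\p^d G/\p y^d)(0,0)$ is of class $\cQ$ near $0$ and vanishes at $0$ — more precisely I want to produce, from $G$, a function whose $(d-1)$-st $y$-derivative at $(x,0)$ vanishes identically in $x$. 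The natural candidate is the ``shift'' $\widetilde G(x,y) := G\big(x, y - c(x)/d \,\big)$ (or rather a shift by a quasianalytic function $e(x)$, $e(0)=0$, chosen so that the coefficient of $y^{d-1}$ in the Taylor expansion of $\widetilde G$ in $y$ vanishes). Concretely, writing $G(x,y)$ via Taylor's formula with remainder in $y$ as $\sum_{j=0}^{d-1} g_j(x) y^j + y^d R(x,y)$ with $g_j$ quasianalytic and $R$ quasianalytic with $R(0,0)\neq 0$ (this uses closure under division by a coordinate, axiom \ref{def:quasian}(1), applied to $y$ repeatedly, exactly as in the proof of Lemma \ref{lem:yreg}), one solves for $e(x)$ with $e(0)=0$ so that the $y^{d-1}$-coefficient of $G(x,y+e(x))$ is zero; because $R(0,0)\neq 0$ this is a single scalar equation in $e$ with invertible linear part, solvable by the implicit function theorem, i.e. axiom \ref{def:quasian}(2).

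Next I would check that this transformation is compatible with the hypotheses: set $\widetilde H(x) := H(x) + e(x) \in \cF_0$ (note $e$ is genuinely quasianalytic, so $\widehat e_0$ makes sense and $\widetilde H$ is a legitimate formal power series), and observe $\widetilde G(x, \widetilde H(x)) = G(x, H(x)) = 0$ formally, while $\widetilde G$ is still $y$-regular of order $d$ at $(0,0)$ since the shift by $e(0)=0$ does not move the point and $\p^d\widetilde G/\p y^d = (\p^d G/\p y^d)\circ(\text{shift})$ is nonzero at $(0,0)$. Moreover $\widetilde G$ now satisfies $\p^{d-1}\widetilde G/\p y^{d-1}(x,0)=0$ by construction. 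Finally I would note that a solution $y = \widetilde h(x) \in \cQ'(V)$ of $\widetilde G = 0$ with Taylor expansion $\widetilde H$ yields the solution $y = h(x) := \widetilde h(x) - e(x)$ of $G=0$ with Taylor expansion $H$, and $h$ lies in the same class $\cQ'$ since $\cQ \subseteq \cQ'$ and $\cQ'$ is a ring (more precisely, here we do not even need Corollary \ref{cor:quasiancontin} — the transformation is an honest quasianalytic change of variable in the target, so it descends directly). Thus replacing $G$ by $\widetilde G$ and $H$ by $\widetilde H$ reduces to the case $\p^{d-1}G/\p y^{d-1}(x,0)=0$.

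The main obstacle I anticipate is purely bookkeeping rather than conceptual: one must be careful that the Tschirnhaus shift really is by a function of class $\cQ$ in the $x$-variables alone (so that the formal solution transforms correctly and no loss of regularity is incurred at this step), and that solving for $e$ is legitimate even though $d$ need not be invertible in $\cQ$ — but it is, since $\p^dG/\p y^d(0,0)$ is a nonzero real number, not just a nonvanishing function, so dividing by it (and by the integer $d$) is harmless. A secondary point to verify is that the Weierstrass-type expansion $G(x,y) = \sum_{j<d} g_j(x)y^j + y^d R(x,y)$ with $g_j \in \cQ$ and $R \in \cQ$, $R(0,0)\neq 0$, really follows from the $y$-regularity and axiom \ref{def:quasian}(1): apply division by the coordinate $y$ repeatedly to $G - \sum_{j<d}(\p^j G/\p y^j)(x,0)y^j/j!$, each time peeling off one power of $y$; after $d$ steps one obtains $y^d R(x,y)$ with $R$ quasianalytic, and $R(0,0) = (\p^d G/\p y^d)(0,0)/d! \neq 0$. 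With these two points dispatched, the reduction is immediate.
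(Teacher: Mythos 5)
Your argument is correct and follows essentially the same route as the paper: apply the implicit function theorem (axiom (2) of Definition \ref{def:quasian}, valid since $(\p^d G/\p y^d)(0,0)\neq 0$) to produce a quasianalytic $\vp(x)$ with $\vp(0)=0$ and $(\p^{d-1}G/\p y^{d-1})(x,\vp(x))=0$, then replace $G(x,y)$ by $G(x,y+\vp(x))$ and $H(x)$ by $H(x)-\hvp_0(x)$. The detour through the Weierstrass-type expansion is unnecessary, since the scalar equation you solve for $e$ is exactly $(\p^{d-1}G/\p y^{d-1})(x,e)=0$ up to the harmless factor $(d-1)!$; also keep the sign conventions for the shift and the modified $H$ consistent (you write both $y-e(x)$ and $y+e(x)$ for the same shift).
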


\begin{proof}
Since $G$ is $y$-regular of order $d$ at $(0,0)$, the function $(\p^{d-1}G / \p y^{d-1})(x,y)$ has nonvanishing
derivative with respect to $y$ at $(0,0)$. By the implicit function theorem (axiom \ref{def:quasian}(2)), 
there is a function $\vp(x)$
of class $\cQ$ at $0$ such that $\vp(0)=0$ and $(\p^{d-1}G / \p y^{d-1})(x,\vp(x)) = 0$. We can replace
$G(x,y)$ by $G(x,y+\vp(x))$ and $H(x)$ by $H(x) - \hvp_0(x)$ to get the lemma.
\end{proof} 

Now, set
$$
c_i(x):= \frac{1}{(d-i)!}\cdot \frac{\p^{d-i} G}{\p y^{d-i}} (x,0),\quad i=2,\ldots,d;
$$
thus $c_1 = 0$. Taking the Taylor expansion
of $G(x,y)$ with respect to $y$, we can write
\begin{equation}\label{eq:Taylor}
G(x,y) = \rho(x,y)y^d +  \sum_{i=2}^dc_i(x)y^{d-i} \,,
\end{equation}
where $\rho$ is $\cC^\infty$ and thus of class $\cQ$ (by axiom \ref{def:quasian}(1)), and $\rho(0,0)\neq 0$.

\begin{lemma}\label{lem:res}
We can assume there exists $\al \in \IN^{n}{\setminus}\{0\}$ such that
$$
c_i(x)^{d!/i} = x^\al c^*_i(x),\quad i=2,\ldots,d,
$$
where each $c^*_i$ is of class $\cQ$ and $c_i^*$ is a unit, for some $i$.
\end{lemma}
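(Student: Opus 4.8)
The plan is to apply resolution of singularities to the ideal generated by the functions $c_i^{d!/i}$, just as in the proof of Lemma \ref{lem:poly1}, and then to descend via Corollary \ref{cor:quasiancontin}. First I would set $\cI$ to be the ideal sheaf generated by $c_i^{d!/i}$, $i=2,\ldots,d$; if $\cI=(0)$ then all $c_i$ vanish identically near $0$, so $G(x,y)=\rho(x,y)y^d$ with $\rho(0,0)\neq 0$, and then $H$ must be (formally, hence actually) a unit times $y=0$'s solution — more precisely $G(x,H(x))=0$ forces $H\equiv 0$ by $y$-regularity and one checks $y=0$ is the desired quasianalytic solution, so the lemma (and in fact the whole theorem at this inductive step) is trivial in that case. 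Otherwise, by resolution of singularities of an ideal in a quasianalytic class \cite[Thm.\,5.9]{BMselecta} (after shrinking $U$ to a relatively compact neighbourhood of $0$), there is a finite composite of admissible blowings-up $\s:M\to U$ such that every $a'\in\s^{-1}(0)$ has a coordinate neighbourhood $W$, with coordinates $z=(z_1,\ldots,z_n)$ centred at $a'$, in which the pullback of $\cI$ is generated by a single monomial $z^\al$; that is, $c_i(\s(z))^{d!/i}=z^\al c_i^*(z)$ with each $c_i^*\in\cQ(W,\IC)$ and $c_i^*$ a unit for at least one $i$.

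Next I would invoke Corollary \ref{cor:quasiancontin}: to prove Theorem \ref{thm:main} it suffices to prove it after replacing $G(x,y)$ by $G(\s(z),y)$ and $H$ by $\hs_{a'}^*(H)$ at each point $a'$ of $\s^{-1}(0)$. Indeed $\s$ is a finite composite of admissible blowings-up, so it is a mapping of the kind in Remarks \ref{rem:maps}(1)(a); if we find a quasianalytic solution $z\mapsto g_{a'}(z)$ of $G(\s(z),g_{a'}(z))=0$ near each $a'$, with formal Taylor expansion $\hs_{a'}^*(H)$, then the hypotheses of Corollary \ref{cor:contin} (in the locally finite version of Remark \ref{rem:maps}(2), since blowings-up need only be local) are met by the functions $g_{a',\be}$, and Corollary \ref{cor:quasiancontin} produces $h\in\cQ'(W')$ on a neighbourhood $W'$ of $0$ with $g_{a'}=h\circ\s$, hence $G(x,h(x))=0$ and $\hh_0=H$ by quasianalyticity. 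One should note that the pullback of a $y$-regular function of order $d$ by a blowing-up in the $x$-variables only is still $y$-regular of order $d$ at each point of $\s^{-1}(0)$, and the Tschirnhausen normalization $(\p^{d-1}G/\p y^{d-1})(x,0)=0$ of Lemma \ref{lem:tschirn} is preserved as well (the $y$-coordinate is untouched), so the normal forms established so far survive the substitution; moreover $c_i(\s(z))$ is exactly the $i$-th coefficient of the new $G$, so the statement we end up with is precisely the claim of the lemma.

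The main point requiring care — though it is not really an obstacle, being identical to the corresponding step in Lemma \ref{lem:poly1} — is the bookkeeping that $d!/i$ is an integer for $i=2,\ldots,d$, so that $c_i^{d!/i}$ makes sense as an element of $\cQ(U,\IC)$ and generates an honest ideal sheaf; this is what forces the uniform exponent $d!$ and lets a single monomial $z^\al$ capture all the $c_i$ simultaneously after resolution. I would also remark that $\al\neq 0$ can be arranged: after resolution the pullback of $\cI$ is locally principal and generated by a monomial, and since $\cI\neq(0)$ we may, after shrinking, assume the monomial is nonconstant at each relevant chart (if $\cI$ were locally the unit ideal at $a'$ then some $c_i^{d!/i}$, hence some $c_i$, is already a unit there and we take $\al$ to be the zero vector — but then $c_i(0)\neq0$ and we are in the situation handled directly, or we simply absorb this degenerate chart). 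Thus the lemma follows.
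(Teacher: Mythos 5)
Your proposal is correct and follows essentially the same route as the paper: define $\cI$ to be the ideal sheaf generated by the $c_i^{d!/i}$, handle $\cI=(0)$ trivially (where indeed $H=0$), apply resolution of singularities to $\cI$, and descend via Corollary~\ref{cor:quasiancontin}. The only small point worth tightening is your treatment of $\al\neq0$: since $G$ is $y$-regular of order exactly $d$ at $(0,0)$, all $c_i(0)=0$, so the pullback of $\cI$ at any $a'\in\s^{-1}(0)$ already lies in the maximal ideal and the ``degenerate chart'' you hedge against simply cannot occur.
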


\begin{proof}
By \eqref{eq:Taylor},
\begin{equation}\label{eq:Taylor2}
\sum_{i=0}^{d-2}c_i(x)H(x)^i + \rho(x,H(x))H(x)^d = 0,
\end{equation}
as a formal expansion at $0$. Let $\cI$ denote the ideal sheaf generated by the
functions $c_i^{d!/i}$, $i=2,\ldots,d$. If $\cI = (0)$, then $H=0$, by \eqref{eq:Taylor2},
so of course we can take $h=0$ to solve our problem. Otherwise, we apply resolution of
singularities to $\cI$, to obtain a finite composite of admissible blowings-up $\s: M \to U$
(after shrinking $U$ to a relatively compact neighbourhood of $0$) such that any point
$a \in \s^{-1}(0)$ admits a coordinate neighbourhood $W$ (with coordinates $z = (z_1,\ldots,z_n)$, say)
in which the pullback of $\cI$ is generated by a monomial $z^\al$, $\al \in \IN \setminus\{0\}$; i.e.,
$$
c_i(\s(z))^{d!/i} = z^\al c^*_i(z),\quad i=2,\ldots,d,
$$
where $c_i^*$ is a unit in $\cQ(W)$, for some $i$.

By Corollary \ref{cor:quasiancontin}, it is enough to find a quasianalytic function $h(z)$ such
that $G(\s(z),h(z))=0$ and $\hh_a=\hs^*_a(H)$. Therefore, we can replace $G$ by $G(\s(z),y)$
and $H$ by $\hs^*_a(H)$ to get the lemma.
\end{proof}

As in the proof of Lemma \ref{lem:poly2}, 
we can write $c_i(x^{d!}) = x^{i\al}\tc_i(x)$, $i=2,\ldots,d$, where
each $\tc_i$ is of class $\cQ$ (and $\tc_i$ is a unit, for some $i$). (Recall Notation \ref{notn:power}.)

Consider
\begin{align*}
G_1(x,y) &:= x^{-d\al}G(x^{d!},x^{\al}y)\\
          &{\phantom :}= \rho(x^{d!},x^\al y)y^d + \sum_{i=2}^{d}\tc_i(x)y^{d-i},\\
               H_1(x) &:= x^{-\al}H(x^{d!}).
\end{align*}
Clearly, $G_1(x,y)$ is a well-defined function of class $\cQ$ in a neighbourhood
of the $y$-axis. Since $c_{d-1} = 0$, $G_1(x,y)$ is $y$-regular of order $\leq d-1$ at any point
$(0,y_0)$. On the other hand, $H_1(x)$ is \emph{a priori} a Laurent series (with finitely many negative 
exponents). We have
$$
G_1(x,H_1(x)) = x^{-d\al}G(x^{d!},H(x^{d!})) = 0.
$$

Write
\begin{equation}\label{eq:H_1}
H_1(x) = \sum \xi_\be x_1^{\be_1}\cdots x_n^{\be_n},
\end{equation}
where the exponents $\be_j$ \emph{a priori} may be negative.

\begin{lemma}\label{lem:fract}
$H_1(x)$ is a formal power series; i.e., $H_1(x)$ has only nonnegative exponents $\be_j$.
\end{lemma}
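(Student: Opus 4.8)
The plan is to suppose $H_1\notin\IR\llb x\rrb$ and derive a contradiction by feeding the identity $G_1(x,H_1(x))=0$ into a monomial valuation tuned to detect the offending negative exponent. First I would use that $H_1=x^{-\al}H(x^{d!})$ has support bounded below (by $-\al$), so every series occurring below lies in the domain $S:=\IR\llb x\rrb[x_1^{-1},\ldots,x_n^{-1}]$; and if $H_1\notin\IR\llb x\rrb$ then, by \eqref{eq:H_1}, there are a coordinate index $j$ and a multiindex $\be$ with $\xi_\be\neq 0$ and $\be_j<0$. Fix this $j$, and for nonzero $\phi=\sum a_\gamma x^\gamma\in S$ let $v(\phi)$ be the least exponent of $x_j$ occurring in $\phi$ (with $v(0):=+\infty$). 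Since $S$ is a domain, $v$ is a valuation: $v(\phi\psi)=v(\phi)+v(\psi)$ and $v(\phi+\psi)\ge\min(v(\phi),v(\psi))$, and $v$ is finite on each of $H_1,\tc_i,u$ by the boundedness of supports.

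Next I would record the identity obtained by substituting $y=H_1(x)$ into $G_1$. Since $x^\al H_1(x)=H(x^{d!})$ and $G_1(x,H_1(x))=x^{-d\al}G(x^{d!},H(x^{d!}))=0$, we have
$$ u(x)\,H_1(x)^d + \sum_{i=2}^{d}\tc_i(x)\,H_1(x)^{d-i}=0 \quad\text{in } S,\qquad u(x):=\rho(x^{d!},H(x^{d!})). $$
Here $u$ is a unit of $\IR\llb x\rrb$ because $u(0)=\rho(0,0)\neq 0$, so $v(u)=0$; and each $\tc_i\in\IR\llb x\rrb$, so $v(\tc_i)\ge 0$. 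If all $\tc_i$ ($2\le i\le d$) were zero, then $uH_1^d=0$ would force $H_1=0\in\IR\llb x\rrb$, contrary to assumption; so the right-hand sum is a nonzero element of $S$.

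Now set $\nu:=v(H_1)$; by the choice of $j$, $\nu\le\be_j<0$. Applying $v$ to the left-hand term gives $v(uH_1^{d})=v(u)+d\nu=d\nu$. Applying $v$ to the sum on the right, and using $v(\tc_i)\ge 0$ together with $d-i\le d-2$ and $\nu<0$ (whence $(d-i)\nu\ge(d-2)\nu$), gives $v\big(\sum_{i=2}^{d}\tc_i H_1^{d-i}\big)\ge(d-2)\nu$. Since the two sides of the displayed identity are negatives of each other, their $v$-values coincide, so $d\nu\ge(d-2)\nu$, i.e.\ $2\nu\ge 0$ — contradicting $\nu<0$. Hence $H_1\in\IR\llb x\rrb$.

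The set-up is the only delicate point: one must check that $v$ really is a valuation on the Laurent ring $S$ (finiteness of $v(H_1)$ from the a priori bound on the negative exponents of $H_1$, and multiplicativity from $S$ being a domain). Granting that, the contradiction is immediate; the underlying reason it works is that the top term $uH_1^d$ of $G_1$ has strictly smaller $v$-value, namely $d\nu$, than every lower term $\tc_iH_1^{d-i}$ ($i\ge 2$), whose $v$-value is at least $(d-i)\nu>d\nu$ because $\nu<0$, so that term cannot be cancelled and $\nu\ge 0$ is forced.
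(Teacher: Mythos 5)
Your proof is correct and is a more streamlined version of the paper's argument. The paper proceeds in two steps: it first shows that $\order H_1(x(t)) \geq 0$ along every formal curve $x(t)$ with $x(0)=0$, applying exactly your "top term has strictly smallest valuation" mechanism to the order-in-$t$ valuation; it then separately constructs a witnessing curve $x(t)=(\la_1 t^q,\la_2 t,\ldots,\la_n t)$, with $q$ large and $\la$ chosen generically to avoid cancellation, to show that a negative exponent of $H_1$ would produce a curve along which the order is negative. You collapse both steps into one by applying the monomial valuation $v(\phi)=\min\{\gamma_j : a_\gamma\neq 0\}$ directly on $S=\IR\llb x\rrb[x_1^{-1},\ldots,x_n^{-1}]$; the observation that the $x_j$-slice ring in the remaining variables is a domain gives multiplicativity of $v$ and absorbs the paper's "generic $\la$" step. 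The core inequality — $d\nu = v(uH_1^d)\geq\min_i v(\tc_iH_1^{d-i})\geq (d-2)\nu$, forcing $\nu\geq 0$ — is the same in both arguments. One small slip in the write-up: you justify $\sum_{i\geq 2}\tc_i H_1^{d-i}\neq 0$ by supposing "all $\tc_i$ were zero"; that sum could in principle vanish even with some $\tc_i$ nonzero, and the correct (and simpler) reason is that it equals $-uH_1^d$, which is nonzero because $u$ is a unit of $\IR\llb x\rrb$ and $H_1\neq 0$.
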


\begin{proof}
We first check that, for any formal curve $x(t) = (x_1(t),\ldots,x_n(t))$, $x(0)=0$, 
the formal expansion $H_1(x(t))$ has nonnegative order;
i.e., $\order H(x(t)^{d!}) \geq \order x(t)^{\al}$. 

Write $K(x) := H(x^{d!})$ to simplify
the notation. Since $G_1(x,x^{-\al}K(x))=0$,
\begin{equation}\label{eq:fract}
\rho(x(t)^{d!},K(x(t)))K(x(t))^d + \sum_{i=2}^{d}\tc_i(x(t))x(t)^{i\al}K(x(t))^{d-i} = 0.
\end{equation}
Suppose that $\order K(x(t)) < \order x(t)^{\al}$. Then, for each $i$, 
$\order K(x(t))^i < \order x(t)^{i\al}$, so that 
$\order K(x(t))^d < \order x(t)^{i\al}K(x(t))^{d-i}$, in contradiction to \eqref{eq:fract}.

Now suppose there is a negative exponent $\be_j$ in \eqref{eq:H_1} (for some nonzero $\xi_\be)$.
Let $b$ denote the smallest negative exponent that occurs; we can assume that $b = \be_1$, for
some $\be = (\be_1,\ldots,\be_n)$. Let $a$ denote the smallest $\be_1 > b$ that occurs in \eqref{eq:H_1},
$A$ the smallest $\be_2 + \cdots + \be_n$ that occurs, and $B$ the smallest $\be_2 + \cdots + \be_n$
that occurs among those exponents with $\be_1 = b$.

Choose $q \in \IN$ such that $qb+B<0$ and $qb+B<qa+A$. Let $I := \{\be: \be_1 = b,\, \be_2 +\cdots + \be_n = B\}$.
Take $x(t) = (\la_1 t^q,\la_2 t,\ldots,\la_n t)$, where $\la$ is chosen so that $\sum_{\be \in I} \xi_\be \la^\be \neq 0$
($\la$ exists because $\sum_{\be \in I} \xi_\be s^\be$ is a nonzero polynomial). Then $\order H_1(x(t)) < 0$;
a contradiction.
\end{proof}

In the same way as above, for all $\ep \in \{-1,1\}^n$, define
\begin{align*}
G^\ep_1(x,y) &:= x^{-d\al}G(\tau^{d!}_\ep (x),x^{\al}y),\\
          H^\ep_1(x) &:= x^{-\al}H(\tau^{d!}_\ep (x))
\end{align*}
(cf. Notation \ref{notn:power});
then $H^\ep_1(x)$ is a formal power series, and $G^\ep_1(x,y)$ is a well-defined function of class $\cQ$ 
in a neighbourhood of the $y$-axis, which is $y$-regular of order $\leq d-1$ at any point $(0,y_0)$.

By Corollary \ref{cor:quasiancontin}, it is enough to show that there is a quasianalytic class $\cQ' \supseteqq \cQ$
with the property that, for all $\ep \in \{-1,1\}^n$, we can find a function $h^\ep$ quasianalytic of class $\cQ'$,
such that $\hh^\ep_0 = H(\tau^{d!}_\ep (x))$.

By induction on $d$, there exists $\cQ'$ with the property that, for each $\ep$, we can find $h_1^\ep$  of class
$\cQ'$ such that $G_1(\tau_\ep^{d!}(x),h_1^\ep(x))=0$ and 
$(h_1^\ep)_0^\wedge = H_1(\tau_\ep^{d!}(x))$; then we can take $h^\ep(x) := x^\al h_1^\ep(x)$.

This completes the proof of Theorem \ref{thm:main}. \qed

\section{Remarks on Weierstrass preparation}\label{sec:Weier}

Let $\cQ$ denote any subclass of $\cC^\infty$ functions which is closed under differentiation and taking 
the reciprocal of a nonvanishing function (we do not assume the axioms of Definition \ref{def:quasian},
to begin with).
A \emph{Weierstrass  polynomial} in $y$ of degree $d$ at $(0,0) \in \IR^n\times \IR$ means a function
$$
p(x,y) = y^d + a_1(x)y^{d-1} + \cdots a_d(x),
$$
where the coefficents $a_i(x) = a_i(x_1,\ldots,x_n)$ are of class $\cQ$ and vanish at $0$.

\begin{definitions}\label{def:Weier}
(1) $\cQ$ has the \emph{Weierstrass preparation property} if, for every function $g(x,y)$
of class $\cQ$ that is $y$-regular of order $d$ at $(0,0)$ (see Lemma \ref{lem:yreg}), there exists a 
Weierstrass polynomial $p(x,y)$ of degree $d$ at $(0,0)$, such that $g(x,y)=u(x,y)p(x,y)$ in a neighbourhood 
of $(0,0)$, where $u$ is a unit of class $\cQ$.

\smallskip\noindent
(2) $\cQ$ has the \emph{Weierstrass division property} if, given $f(x,y),\,g(x,y)$ of class $\cQ$, 
where $g$ is $y$-regular of order $d$ at $(0,0)$, 
$$
f(x,y) = q(x,y)g(x,y) + \sum_{i=1}^d r_i(x)y^{d-i},
$$
where $q$ and the $r_i$ are of class $\cQ$.

\smallskip\noindent
(3) $\cQ$ has the \emph{property of division by a Weierstrass polynomial} if (2) holds in the
special case that $g$ is a Weierstrass polynomial in $y$ of degree $d$.
\end{definitions}

The following lemma is classical, though it seems not so well known (see \cite[\S\,2]{CL}, \cite[Kap.\,I,\,\S4.\,Supp.\,3]{GR}).

\begin{lemma}\label{lem:Weier}
The Weierstrass preparation and division properties are equivalent. If $\cQ$ satisfies the
implicit function property (axiom \ref{def:quasian}(2)), then all three properties of Definitions \ref{def:Weier}
are equivalent.
\end{lemma}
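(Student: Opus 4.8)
The plan is to prove the two implications: Weierstrass preparation $\Rightarrow$ Weierstrass division (and conversely), and then, under the implicit function property, division by a Weierstrass polynomial $\Rightarrow$ Weierstrass division.

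\textbf{Preparation $\Rightarrow$ division.} Suppose $\cQ$ has the Weierstrass preparation property and let $f,g$ be of class $\cQ$ with $g$ $y$-regular of order $d$ at $(0,0)$. Write $g = u\cdot p$ with $u$ a unit and $p$ a Weierstrass polynomial of degree $d$. It suffices to divide $f$ by $p$, since then dividing the quotient by $u$ (reciprocal of a nonvanishing $\cQ$-function is of class $\cQ$) gives the result for $g$. So I reduce to dividing $f$ by a Weierstrass polynomial $p$ of degree $d$. Now consider the auxiliary function $\Phi(x,y) := f(x,y)\cdot T + p(x,y)$ in the variables $(x,y,T)$ — more cleanly, apply preparation to the function $G(x,y,z) := z - f(x,y)$ viewed near a suitable point, or better yet follow the classical trick: the function $\Psi(x,y,z) := p(x,y) + z\,(\text{something})$... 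Actually the cleanest classical route is: divide $f(x,y)$ by $p(x,y)$ by applying the division property to the pair $(f, p)$ directly once we know preparation is available for a cleverly chosen function. The standard argument (see \cite{GR}) is to apply Weierstrass preparation to $F(x,y,z) := zp(x,y) - f(x,y)\cdot$(unit)$\ldots$; I will instead use the following: set $F(x,y,z) := p(x,y) - z\cdot\big(p(x,y)-\text{(Taylor tail)}\big)$ — the point being that one constructs a $\cQ$-function in one extra variable that is $y$-regular of order $d$, prepares it, and reads off the quotient and remainder from the coefficients, which depend on the extra variable $\cQ$-analytically. I expect to organize this exactly as in \cite[\S2]{CL}.

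\textbf{Division $\Rightarrow$ preparation.} Conversely, if $\cQ$ has the division property and $g$ is $y$-regular of order $d$ at $(0,0)$, divide $f(x,y) := y^d$ by $g$: we get $y^d = q(x,y)g(x,y) + \sum_{i=1}^d r_i(x)y^{d-i}$ with $q, r_i$ of class $\cQ$. Evaluating the $d$-th $y$-derivative at $(0,0)$ shows $q(0,0)\neq 0$ (using $y$-regularity of $g$), so $q$ is a unit; moreover each $r_i(0)=0$, again by $y$-regularity. Hence $p(x,y) := y^d - \sum_{i=1}^d r_i(x)y^{d-i} = q(x,y)g(x,y)$ is a Weierstrass polynomial of degree $d$ and $g = q^{-1}p$ with $q^{-1}$ of class $\cQ$, giving preparation.

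\textbf{The implicit function property upgrades division by a Weierstrass polynomial to full division.} Assume (3) and the implicit function property; I claim (1) holds, which by the above gives all three equivalent. Given $g$ of class $\cQ$, $y$-regular of order $d$ at $(0,0)$, I must produce a Weierstrass polynomial $p$ of degree $d$ with $g = up$, $u$ a unit. The strategy is to peel off the roots successively using the implicit function theorem: since $g$ is $y$-regular of order $d$, the function $(\p^{d-1}g/\p y^{d-1})(x,y)$ has nonvanishing $y$-derivative at $(0,0)$, so by the implicit function property there is $\vp_1(x)$ of class $\cQ$ with $\vp_1(0)=0$ and $(\p^{d-1}g/\p y^{d-1})(x,\vp_1(x))=0$; the substitution $y\mapsto y+\vp_1(x)$ does not change $y$-regularity and arranges a normalization. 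Iterating, or rather combining this with division, one reduces the order. More precisely, I will argue: by (3) one can already divide by any given degree-$d$ Weierstrass polynomial; choosing the polynomial $y^d$ and noting $y^d$ is (trivially) a Weierstrass polynomial, one can try to invert the relation between $g$ and $y^d$-type polynomials. The hard part here — and I expect this to be the main obstacle — is showing (3)$\Rightarrow$(1) without circularity: one must genuinely build the Weierstrass polynomial attached to a general $y$-regular $g$, and the only new tool beyond (3) is the implicit function theorem, so the argument must extract the $d$ symmetric-function coefficients of the "roots" of $g$ as $\cQ$-functions of $x$. The standard device (see \cite[\S2]{CL}, \cite[Kap.\,I,\,\S4,\,Supp.\,3]{GR}) is to apply (3) to divide $y\cdot g(x,y)$-type expressions or to use the implicit function theorem on an auxiliary system whose solution gives the coefficients $a_i(x)$; I would follow that construction and then verify the $a_i$ vanish at $0$ by the order-$d$ $y$-regularity and that the resulting cofactor is a unit by comparing $d$-th $y$-derivatives at the origin. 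Once $p$ is produced, $g=up$ follows and we are back in the already-established equivalence.

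Throughout, the only properties of $\cQ$ used in the first two implications are closure under differentiation and under taking reciprocals of nonvanishing functions (to pass between $g$ and its associated Weierstrass polynomial via the unit $u$), exactly as hypothesized; the third implication additionally invokes the implicit function property. The main obstacle, as noted, is the careful construction in (3)$\Rightarrow$(1) of the Weierstrass polynomial's coefficients as genuine $\cQ$-functions — everything else is bookkeeping with $y$-derivatives at the origin.
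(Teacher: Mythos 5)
Your middle implication (division $\Rightarrow$ preparation, by dividing $y^d$ by $g$) is correct and matches the paper. The other two directions, however, are not actually proved: you repeatedly identify that there is a construction to be done and defer to the references, but the argument itself is missing.

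For preparation $\Rightarrow$ division, your very first guess --- the auxiliary function $\Phi(x,y,t) := p(x,y) + t f(x,y)$ in one extra variable $t$ --- is exactly right, but you then abandon it for several incorrect alternatives (``$z - f(x,y)$'', ``$zp(x,y) - f(\cdot)\cdot\text{unit}$'') and never execute any of them. The correct completion: $\Phi$ is $y$-regular of order $d$ at $(0,0,0)$, so preparation in $n+2$ variables gives $p + tf = U(x,y,t)\,P(x,y,t)$ with $P$ a degree-$d$ Weierstrass polynomial in $y$; since $U(x,y,0)=1$ and $P(x,y,0)=p(x,y)$, differentiating in $t$ and setting $t=0$ yields $f = (\partial_t U|_{t=0})\,p + \partial_t P|_{t=0}$, and after multiplying by $u^{-1}$ (where $g=up$) the first term is $q\cdot g$ and the second is a polynomial in $y$ of degree $<d$. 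Note this means preparation in $k+1$ variables gives division in $k$ variables; you should state that bookkeeping explicitly rather than claim a flat equivalence variable-by-variable.

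For the implication (3)\,$\Rightarrow$\,(1) under the implicit function property, the concrete device you are searching for is the \emph{generic polynomial} $P(\lambda,y)=y^d+\sum_{i=1}^d\lambda_i y^{d-i}$, where $\lambda=(\lambda_1,\ldots,\lambda_d)$ are fresh variables. Since $P(\lambda,y)$ is a Weierstrass polynomial at $\lambda=0$, property (3) lets you divide $g(x,y)$ (regarded as a $\cQ$-function of $(x,\lambda,y)$) by $P$: $g(x,y)=q(x,\lambda,y)P(\lambda,y)+\sum_{i=1}^d r_i(x,\lambda)y^{d-i}$. Setting $x=0=\lambda$ and using $y$-regularity of order $d$ gives $q(0,0,0)\neq 0$ and $r_i(0,0)=0$ for all $i$; one then checks that $\det(\partial r_i/\partial\lambda_j)(0,0)\neq 0$, so the implicit function property yields $\lambda=\varphi(x)$ of class $\cQ$ with $r_i(x,\varphi(x))\equiv 0$, $\varphi(0)=0$, and hence $g(x,y)=q(x,\varphi(x),y)\,P(\varphi(x),y)$ is the desired preparation. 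Your proposed alternative of ``peeling off roots successively'' via the implicit function theorem applied to $\partial^{d-1}g/\partial y^{d-1}$ only kills the degree-$(d-1)$ coefficient and does not by itself produce the remaining $a_i(x)$; it is not a substitute for the generic-polynomial argument, and it is the genuine missing idea in your write-up.
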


\begin{proof} 
We first show that Weierstrass preparation in $k+1$ variables implies Weierstrass division in $k$ variables.
Suppose that $g(x,y) = g(x_1,\ldots,x_n,y)$ is of class $\cQ$ and $y$-regular of order $d$ at $(0,0)$. Let 
$f(x,y)$ be a function of class $\cQ$. We want to divide $f$ by $g$. By Weierstrass preparation, $g(x,y) = u(x,y)p(x,y)$
in class $\cQ$, where $u$ is a unit and $p$ is a Weierstrass polynomial $p(x,y) = y^d + \sum_{i=1}^d a_i(x)y^{d-i}$.

The function $F(x,y,t) = p(x,y) + tf(x,y)$ is $y$-regular of order $d$ at $(0,0,0)$. By Weierstrass preparation,
\begin{equation}\label{eq:Weier1}
p(x,y) + tf(x,y) = U(x,y,t)P(x,y,t),
\end{equation}
where $U$ is a unit and $P$ is a Weierstrass polynomial in $y$ of degree $d$.
Clearly, $U(x,y,0) = 1$ and $P(x,y,0) = p(x,y)$. Let
$$
r(x,y) = \left.\frac{\p P(x,y,t)}{\p t}\right|_{t=0};
$$
then $r(x,y)$ is a polynomial of degree $<d$ in $y$. Apply $\p /\p t$ to \eqref{eq:Weier1} and set $t=0$; we
get
\begin{equation*}
f(x,y) = q(x,y)g(x,y) + r(x,y),
\end{equation*}
where
$$
q(x,y) = \left.\frac{\p U(x,y,t)}{\p t}\right|_{t=0}\cdot u(x,y)^{-1},
$$
as required.

Clearly, Weierstrass division in $k$ variables implies Weierstrass preparation in $k$ variables.
(Given $g(x,y)$ regular of order $d$ in $y$, divide $y^d$ by $g$ and subtract the remainder term.)

Now assume that $\cQ$ satisfies the implicit function property and the property of division by a
Weierstrass polynomial. Let $P(\la,y)$ denote the \emph{generic
polynomial} of degree $d$,
$$
P(\la,y) := y^d + \sum_{i=1}^d \la_i y^{d-i}.
$$
Given $g(x,y)$, divide by $P(\la,y)$ as functions of $(x,\la,y)$:
\begin{equation}\label{eq:Weier2}
g(x,y) = q(x,\la,y)P(\la,y) + \sum_{i=1}^d r_i(x,\la) y^{d-i}.
\end{equation}

Suppose that $g(x,y)$ is $y$-regular of order $d$ at $(0,0)$. Put $x = 0 =\la$ in \eqref{eq:Weier2};
then $\text{unit}\cdot y^d = q(0,0,y)y^d + \sum r_i(0,0)y^{d-i}$. Clearly, $q(0,0,0)\neq 0$ and $r_i(0,0)=0$,
for all $i$. It is easy to check that the Jacobian determinant $\det (\p r_i /\p \la_j)$ does not vanish at $(0,0)$,
so that the system of equations $r_i(x,\la)=0$ has a solution $\la = \vp(x)$, $\vp(0)=0$, and we get Weierstrass
preparation $g(x,y) = q(x,\vp(x),y)P(\vp(x),y)$.
\end{proof}

\begin{definition}\label{def:hyper}
Let $g(x,y)=g(x_1,\ldots,x_n,y)$ denote a $\cC^\infty$ function that is $y$-regular of order $d$
at $(0,0)\in \IR^n\times \IR$. We say that $g$ is $y$-hyperbolic at $(0,0)$ if, for every $\ep>0$, there
exists $\de>0$ such that, for any given $x$ such that $|x|<\de$ (where $|x| := (x_1^2 +\cdots + x_n^2)^{1/2}$),
$g(x,y)=0$ has $d$ real roots (counted with multiplicity) in the interval $(-\ep,\ep)$.
\end{definition}

\begin{theorem}\label{thm:Weier}
Let $g(x,y)$ be a function of quasianalytic class $\cQ$ in a neighbourhood of $(0,0) \in \IR^n\times \IR$. 
Assume that $g$ is regular of order $d$ and hyperbolic with respect to $y$ at $(0,0)$. Then there is a
(perhaps larger) quasianalytic class $\cQ' \supseteq \cQ$ such that $g(x,y)=u(x,y)p(x,y)$ near
$(0,0)$, where 
$$
p(x,y) = y^d + a_1(x)y^{d-1} + \cdots a_d(x)
$$
is a Weierstrass polynomial with coefficients $a_i(x)$ of class $\cQ'$, and $u$ is a unit of class $\cQ'$.
\end{theorem}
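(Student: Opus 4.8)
The plan is to follow the scheme of the proofs of Theorems~\ref{thm:main} and~\ref{thm:poly}: reduce $g$, by composing with a suitable finite sequence of admissible blowings-up and power substitutions in the $x$-variables, to a situation in which a residual polynomial in $y$ splits over $\IR$, build the Weierstrass factorization by induction on $d$ from that splitting, and descend by Corollary~\ref{cor:quasiancontin}. The feature that is new compared with Theorems~\ref{thm:main}--\ref{thm:poly}, and that makes the argument go through, is that $y$-hyperbolicity is preserved by all of these reductions; it is exactly hyperbolicity that forces the residual polynomial to be real-rooted, hence to admit a splitting over $\IR$, which is what lets the induction close.

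By the formal Weierstrass preparation theorem, the Taylor expansion of $g$ at $(0,0)$ factors as $\hat u_0\cdot\hat p_0$, where $\hat p_0 = y^d + \sum_{i=1}^d B_i(x)y^{d-i}$ with $B_i\in\IR\llb x\rrb$, $B_i(0)=0$, and $\hat u_0$ a unit; so it suffices to produce quasianalytic functions $a_i$, of a class $\cQ'\supseteq\cQ$, with Taylor expansions $B_i$ at $0$, together with the unit $u$ (which I will produce alongside the $a_i$ at each stage, so that the descent delivers both factors of $g=up$ at once). The reduction uses Corollary~\ref{cor:quasiancontin} and its covering version (Remark~\ref{rem:maps}(2)): it is enough to prove the statement after replacing $g(x,y)$ by $g(\s(z),y)$ for $\s$ an admissible blowing-up of the $x$-space, and after the power-substitution reductions $x\mapsto\tau^{d!}_\ep(x)$, $y\mapsto x^\al y$ of the proof of Theorem~\ref{thm:main}. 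One must verify that the hypotheses persist at the relevant points $(a',0)$, $a'\in\s^{-1}(0)$: $g(\s(z),y)$ is again $y$-regular of order $d$ and $y$-hyperbolic, since its roots in $y$ near $0$ are those of $g$ near $0$, hence real; translation $y\mapsto y-\vp(x)$ by a quasianalytic $\vp$ with $\vp(0)=0$ likewise preserves hyperbolicity; and the function $G_1(x,y):=x^{-d\al}g(\tau^{d!}_\ep(x),x^\al y)$, which is of class $\cQ$ near the $y$-axis as in Theorem~\ref{thm:main}, is $y$-hyperbolic at every point $(0,y_0)$, of order the multiplicity of $y_0$ as a root of $P_0(y):=G_1(0,y)$, because for small $x\neq0$ the roots of $G_1(x,\cdot)$ near a root of $P_0$ are obtained by dividing the real roots of $g(\tau^{d!}_\ep(x),\cdot)$ near $0$ by $x^\al$; letting $x\to0$ then shows $P_0$ is real-rooted of degree $d$.

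The induction on $d$ runs as follows. For $d=1$, axiom~\ref{def:quasian}(2) (the implicit function theorem) gives $y=a_1(x)\in\cQ$ with $a_1(0)=0$ and $g(x,a_1(x))=0$; after the quasianalytic change $y\mapsto y-a_1(x)$, axiom~\ref{def:quasian}(1) yields $g=u\cdot(y-a_1)$ with $u$ a unit of class $\cQ$. For $d>1$, I would apply Tschirnhausen (Lemma~\ref{lem:tschirn}) to make $(\p^{d-1}g/\p y^{d-1})(x,0)=0$, write $g=\rho(x,y)y^d+\sum_{i=2}^d c_i(x)y^{d-i}$ with $c_i\in\cQ$, $c_i(0)=0$, $\rho(0,0)\neq0$; if all the $c_i$ vanish then $g=\rho\cdot y^d$ and we are done, and otherwise I resolve the ideal generated by $c_2^{d!/2},\dots,c_d^{d!/d}$ and perform the substitutions $x\mapsto\tau^{d!}_\ep(x)$, $y\mapsto x^\al y$ exactly as in Lemmas~\ref{lem:res}--\ref{lem:fract}. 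This yields $G_1$ as above, with $P_0(y)=\rho(0,0)y^d+\sum_{i=2}^d\tc_i(0)y^{d-i}$, in which $\tc_{i_0}(0)\neq0$ for some $i_0$ (as in the proof of Theorem~\ref{thm:main}, the power substitution turns the monomialized $c_i^{d!/i}$ into monomials $x^{i\al}\tc_i$ with $\tc_{i_0}$ a unit) and which has no $y^{d-1}$ term. Being real-rooted of degree $d\geq2$, not equal to $\rho(0,0)y^d$, and with vanishing $y^{d-1}$-coefficient, $P_0$ cannot be a $d$-th power of a linear form, so it has at least two distinct real roots; grouping these into two nonempty families gives $P_0=Q_0R_0$ with $Q_0,R_0$ real, of positive degree, and coprime. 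By Lemma~\ref{lem:resultant}, $G_1$ factors as a product of two polynomials in $y$ of smaller degree with coefficients of class $\cQ$, each again $y$-regular and $y$-hyperbolic of the expected order at the corresponding points of the $y$-axis (the roots of a factor form a subset of the real roots of $G_1$). Applying the induction hypothesis (translated to center at each such point) to the two factors and multiplying the resulting quasianalytic Weierstrass factorizations gives such a factorization of $g$ near every point of the exceptional fibre; these are assembled and descended, exactly as in the proofs of Theorems~\ref{thm:main} and~\ref{thm:poly}, via Lemma~\ref{lem:glue} and Corollary~\ref{cor:quasiancontin}, to yield $g=up$ over a neighbourhood of $0$ with $u$ and the $a_i$ of class $\cQ'$.

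The main obstacle is the claim in the second paragraph that $P_0$ is real-rooted, i.e., that $y$-hyperbolicity genuinely survives the power substitution $y\mapsto x^\al y$. This is exactly what excludes the bad case $P_0(y)=c\,(y^2+e)^{d/2}$ with $c\neq0$, $e>0$ --- where $P_0$ is a power of an irreducible real quadratic and admits no splitting over $\IR$ --- which is precisely the phenomenon that makes the unrestricted Weierstrass preparation property delicate in quasianalytic classes. Everything else --- the verifications that $y$-regularity and $y$-hyperbolicity of the expected orders persist through the blowings-up and through the translations and substitutions of the inductive step, and the gluing-and-descent bookkeeping for the local factorizations living over different points of the exceptional fibre --- is routine given the machinery of Sections~\ref{sec:contin}--\ref{sec:poly}, and the resulting class $\cQ'$ depends only on $\cQ$ and on the finite sequence of blowings-up and power substitutions used, in accordance with the convention fixed in the introduction.
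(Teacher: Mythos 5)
Your overall strategy matches the paper's: Tschirnhausen, resolution of the ideal generated by the $c_i^{d!/i}$, the power substitutions $x\mapsto\tau^{d!}_\ep(x)$ and $y\mapsto x^\al y$ producing $G_1$, the observation that hyperbolicity forces the residual polynomial $P_0(y)=G_1(0,y)$ to be real-rooted while $\tilde c_1(0)=0$ and $\tilde c_{i_0}(0)\neq 0$ for some $i_0$ force at least two distinct roots, induction on $d$ near each root, and descent via Corollary~\ref{cor:quasiancontin}. The recurring gap is that in several steps you treat $G_1(x,y)=\rho(x^{d!},x^\al y)\,y^d+\sum_{i\geq 2}\tilde c_i(x)\,y^{d-i}$ as a monic polynomial in $y$, which it is not, since $\rho$ depends on $y$. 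Concretely: (i) \emph{real-rootedness of $P_0$}: the phrase ``letting $x\to0$ shows $P_0$ is real-rooted'' implicitly invokes continuity of the $d$ complex roots, which is a fact about polynomials; knowing that $G_1(x,\cdot)$ has real roots near the $y$-axis for small $x\neq0$ does not, for a general $\cC^\infty$ function of $y$, rule out a pair of non-real roots of $P_0$. (ii) Lemma~\ref{lem:resultant} is stated for monic polynomials $y^d+\sum A_i(x)y^{d-i}$, so it cannot be used to factor $G_1$ itself, as you do. (iii) Hyperbolicity of your purported factors near points of the $y$-axis again rests on counting roots of $G_1(x,\cdot)$ as if it were polynomial.

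The paper closes all three gaps with a single move that your proposal omits: it first applies Malgrange's $\cC^\infty$ preparation theorem to $g$, writing $g=uh$ with $u$ a $\cC^\infty$ unit and $h(x,y)=y^d+\sum b_i(x)y^{d-i}$ a $\cC^\infty$ Weierstrass polynomial. Hyperbolicity passes to $h$, the $b_i$ have Taylor expansions $B_i$, and one checks that $\tilde b_i(x):=b_i(\tau^{d!}_\ep(x))/x^{i\al}$ is $\cC^\infty$, so that $h_1(x,y):=y^d+\sum\tilde b_i(x)y^{d-i}$ is a genuine monic polynomial with $\cC^\infty$ coefficients and $h_1(0,\cdot)$ is proportional to $P_0$. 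Real-rootedness then follows from continuity of roots of the polynomial $h_1(x,\cdot)$ along a line into the origin. Lemma~\ref{lem:resultant} is applied to the formal polynomial $H_1$, not to $G_1$; and the inductive hypothesis is invoked not for ``factors of $G_1$'' but for the translated $\cQ$-functions $G_1(x,\la_j+y)$, each $y$-regular of order $d_j<d$ and hyperbolic at $(0,0)$. Your outline is repairable by inserting this Malgrange preparation step, but as written the root-counting arguments do not close.
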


\begin{proof}
The proof is by induction on $d$. By the formal Weierstrass preparation theorem,
$$
\hg_{(0,0)}(x,y) = U(x,y)H(x,y),
$$
where 
$$
H(x,y) = y^d + \sum_{i=1}^d B_i(x) y^{d-i} \in \IR\llb x\rrb[y]
$$
and $U(x,y)$ is a unit in $\IR\llb x,y\rrb$.

We argue as in the proof of Theorem \ref{thm:main} (Section \ref{sec:main}). Note that the
hyperbolicity property of $g$ is preserved after pull-back to any point in the inverse image of the 
origin, by a blowing-up in the $x$-variables. We can assume that $(\p^{d-1} g / \p y^{d-1})(x,0)=0$
(Lemma \ref{lem:tschirn}), so that
$$
g(x,y) = \rho(x,y)y^d +  \sum_{i=2}^dc_i(x)y^{d-i} \,,
$$
as in \eqref{eq:Taylor}, where $\rho(0,0)\neq 0$. We can also assume (as in Lemma \ref{lem:res} ff.) 
that there exists $\al \in \IN^{n}{\backslash}\{0\}$ such that 
$c_i(x^{d!}) = x^{i\al} \tc_i(x)$, $i=2,\ldots, d$, where each $\tc_i$ is of class $\cQ$ and $\tc_i$ is 
a unit, for some $i$.

Consider
\begin{align*}
G_1(x,y) &:= x^{-d\al}g(x^{d!},x^{\al}y) = \rho(x^{d!},x^\al y)y^d + \sum_{i=2}^{d}\tc_i(x)y^{d-i},\\
H_1(x,y) &:= x^{-d\al}H(x^{d!},x^\al y) = y^d + \sum_{i=1}^{d}x^{-i\al}B_i(x^{d!})y^{d-i}\,.
\end{align*}
Then $G_1(x,y) = U(x^{d!},x^\al y) H_1(x,y)$ as formal expansions. Setting $y=0$, we see that
$\tc_d(x) = U(x^{d!},0)x^{-d\al}B_d(x^{d!})$, so that $B_d(x^{d!})$ is divisible by $x^{-d\al}$; i.e., 
$x^{-d\al}B_d(x^{d!})$ is a formal power series
$\tB_d(x)$. Successively taking $\p^j /\p y^j$, $j=1,2,\ldots$ and setting $y=0$, we see that each
$x^{-i\al}B_i(x^{d!})$ is a formal power series $\tB_i(x)$.

Setting $x=0$, we have
$$
\rho(0,0)y^d + \sum_{i=2}^{d}\tc_i(0)y^{d-i} = U(0,0) \left(y^d + \sum_{i=1}^{d}\tB_i(0)y^{d-i}\right).
$$
Therefore, $\rho(0,0)=U(0,0)$, $\tB_1(0)=0$, and $\tc_i(0) = U(0,0)\tB_i(0)$, $i=2,\ldots,d$.

We claim that all roots of $y^d + \sum_{i=1}^{d}\tB_i(0)y^{d-i} = 0$
are real. In fact, by the Malgrange preparation theorem \cite[Ch.\,V]{Malg}, we can write
$g(x,y) = u(x,y)h(x,y)$ in a neighbourhood of the origin, where $u(x,y)$
is a nonvanishing $\cC^\infty$ function, and
$$
h(x,y) = y^d + \sum_{i=1}^{d} b_i(x)y^{d-i},
$$
where, for each $i$, $b_i(x)$ is $\cC^\infty$ and $b_i(0)=0$. It follows
that $h(x,y)$ is $y$-hyperbolic at $0$, and $B_i(x)$ is the formal Taylor
expansion at $0$ of $b_i(x)$, for each $i$. Set $\tb_i(x) := b_i(x^{d!})/x^{i\al}$, $i=1,\ldots,d$.
Then each $\tb_i(x)$ is a $\cC^\infty$ function in a neighbourhood of $0$ (as can
be seen by taking successive derivatives with respect to $y$ of the equation
$G_1(x,y) = u(x^{d!}, x^{\al}y) h_1(x,y)$, where $h_1(x,y) := y^d + \sum_{i=1}^{d} \tb_i(x)y^{d-i}$, 
and then setting $y=0$). Moreover, $\tb_i(0) = \tB_i(0)$, for each $i$, and
$y^d + \sum_{i=1}^{d}\tB_i(0)y^{d-i} = 0$
has $d$ complex roots. So these are all real, by continuity of the roots of $h_1(x,y)=0$
(for example, on a line $(x_1,\ldots,x_n)=(t,t,\ldots,t)$).

Moreover, since $\tB_1(0)=0$ and $\tB_i(0)$ is a unit, for some $i$, we can write
$$
y^d + \sum_{i=1}^{d}\tB_i(0)y^{d-i} = \prod_{j=1}^q \left(y-\la_j\right)^{d_j},
$$
where the $\la_j$ are distinct real numbers and $q\geq 2$; thus each $d_j < d$.
It follows from Lemma \ref{lem:resultant} that
$$
H_1(x,y) = \prod_{j=1}^q \left(y^{d_j} + \sum_{i=1}^{d_j} B_{ji}(x)y^{d_j -i}\right),
$$
where all $B_{ji}(x) \in \IR\llb x\rrb$.

For each $j$, $G(x, \la_j + y)$ is $y$-regular of order $d_j$ and hyperbolic at $(0,0)$.
By induction, there is a quasianalytic class $\cQ' \supseteq \cQ$, such that every coefficient
$B_{ji}(x)$ is the formal Taylor expansion at $0$ of a function $b_{ji}(x)$ of class $\cQ'$.

The argument above applies equally to $G_1^\ep(x,y)$ and $H_1^\ep(x,y)$ (as defined in
the proof of Theorem \ref{thm:main}), for any $\ep \in \{-1,1\}^n$, so the conclusion of 
Theorem \ref{thm:Weier} follows also as in the proof of Theorem \ref{thm:main}. (Of course,
$p(x,y)$ coincides with the function $h(x,y)$ above.)
\end{proof}

\begin{remarks}\label{rem:Weier}
(1)\, Chaumat and Chollet proved that division by a hyperbolic Weierstrass polynomial of quasianalytic
class $\cQ$ (i.e., division according to property (3) of Definitions \ref{def:Weier}) holds with no loss of regularity
in the quotient and remainder \cite{CC}. It follows from \cite{CC} together with
Theorem \ref{thm:Weier} that, if $g(x,y)$ is a function of class $\cQ$ that is regular and hyperbolic with respect
to $y$, then Weierstrass division by $g(x,y)$ (property (2) above) holds with loss of regularity given by
Theorem \ref{thm:Weier}. It is not evident that this result follows directly either from \cite{CC} or from
Theorem \ref{thm:Weier} (compare with the implications (1)$\implies$(2) and (3)$\implies$(1) in the proof
of Lemma \ref{lem:Weier} above).

\smallskip\noindent
(2)\, Let $\cQ$ be a quasianalytic Denjoy-Carleman class $\cQ_M$, and let $g(x,y)$ denote a
hyperbolic Weierstrass polynomial of class $\cQ_M$, of degree $d$ in $y$ (for $x$ in a neighbourhood 
of $0$ in $\IR^n$). Let $a_0 = (0,0) \in \IR^n\times \IR$. If $f(x,y)$ is of class $\cQ_M$ and $\hf_{a_0} = H\cdot\hg_{a_0}$,
where $H \in \IR\llb x,y\rrb$, then there exists $h(x,y)$ of class $\cQ_M$ in a neighbourhood of $a_0$, such
that $f=h\cdot g$. In fact, by Proposition \ref{prop:princ}, for each $x$ near $0$, all roots of $g(x,y)=0$
are roots of $f(x,y)=0$. By \cite{CC}, $f(x,y)=q(x,y)g(x,y)+r(x,y)$ near $a_0$, where $q,\,r$ are of class $\cQ_M$,
and $r(x,y)$ is a polynomial in $y$ of degree $<d$; therefore, $r=0$.

\smallskip\noindent
(3)\, The loss of regularity in Theorem \ref{thm:Weier} depends on the function $g(x,y)$. For any given quasianalytic 
Denjoy-Carleman class $\cQ_M$ (except $\cQ_M = \cO$), it is not true that there exists a (perhaps larger) 
Denjoy-Carleman class $\cQ_{M'}$, such that, given $g(x,y)$ $y$-regular of class $\cQ_M$ (or a Weierstrass
polynomial of class $\cQ_M$), any function $f(x,y)$ of class $\cQ_M$ admits Weierstrass division by $g(x,y)$
with quotient and remainder of class $\cQ_{M'}$ \cite{ABBNZ}, \cite{L}, \cite{T}. 

It is easy to see that, given $\cQ_M$ and $f \in \cQ_M([0,1))$ (say $f(0)=0,\, f'(0)>0$), the function
$g(x,y):=f(y^2)-x$ (which is $y$-regular of order $2$) satisfies the Weierstrass preparation property
with $u,p \in \cQ'$, for some quasianalytic class $\cQ' \supseteqq \cQ_M$, if and only if 
$f$ extends to a function in $\cQ'((-\de,1))$, for some $\de > 0$. Nazarov, Sodin and Volberg \cite{NSV} showed, 
in fact, that there is a quasianalytic 
Denjoy-Carleman class $\cQ_M$, and a function $f \in \cQ_M([0,1))$ which admits no extension to a function
in $\cQ_{M'}((-\de,1))$, for any quasianalytic $\cQ_{M'}$ and $\de > 0$.

It seems interesting to ask whether a Denjoy-Carleman class $\cQ_M$ nevertheless does have the
Weierstrass division property or the extension property as above, where $\cQ'$ is some quasianalytic class that depends 
on the given functions.

\smallskip\noindent
(4)\, The point of view of this article seems relevant to the study of many algebraic properties 
of local rings of quasianalytic functions. For example, it is unknown (and unlikely) that such local rings 
are Noetherian, in general, although a topological version of Noetherianity follows from resolution 
of singularities \cite[Thm.\,6.1]{BMselecta}; cf. the proof of Corollary \ref{cor:contin} above. 
The local ring $\cQ_n$ of germs of functions of quasianalytic class $\cQ$ at the origin of $\IR^n$ is 
Noetherian if and only if, for all $f,g_1,\ldots g_p \in \cQ_n$, the equation 
$f(x) = \sum_{i=1}^p y_i g_i(x)$ has a solution $y_i = h_i(x)$, $i=1,\ldots,p$,
of class $\cQ$, provided there is a formal power series solution $y_i = H_i(x)$. 
One can ask whether this formal condition implies rather the existence of a
quasianalytic solution with loss of regularity depending on $g_1,\ldots g_p$ and perhaps $f$.
\end{remarks}

\bibliographystyle{amsplain}

\end{document}